\def\H{{\mathbb H}}
\begin{document}

\setcounter{secnumdepth}{3}
\setcounter{tocdepth}{2}

\newtheorem{definition}{Definition}[section]
\newtheorem{lemma}[definition]{Lemma}
\newtheorem{sublemma}[definition]{Sublemma}
\newtheorem{corollary}[definition]{Corollary}
\newtheorem{proposition}[definition]{Proposition}
\newtheorem{theorem}[definition]{Theorem}

\newtheorem{remark}[definition]{Remark}
\newtheorem{exemple}[definition]{Example}

\newcommand{\mf}{\mathfrak}
\newcommand{\mb}{\mathbb}
\newcommand{\ol}{\overline}
\newcommand{\la}{\langle}
\newcommand{\ra}{\rangle}
\newcommand{\hess}{\mathrm{Hess}}

\newtheorem{thmprime}{Theorem}
\renewcommand{\thethmprime}{1.\arabic{thmprime}\textquotesingle}


\newtheorem{Alphatheorem}{Theorem}
\renewcommand{\theAlphatheorem}{\Alph{Alphatheorem}}

\newtheorem{Alphatheoremprime}{Theorem}
\renewcommand{\theAlphatheoremprime}{\Alph{Alphatheoremprime}\textquotesingle}

\theoremstyle{definition}

\theoremstyle{plain}
\newtheorem*{flp}{Flip algorithm}
\newtheorem{clm}{Claim}

\newcommand{\EM}{\ensuremath}
\newcommand{\norm}[1]{\EM{\left\| #1 \right\|}}

\newcommand{\modul}[1]{\left| #1\right|}

\def\co{\colon\thinspace}
\def\I{{\mathcal I}}
\def\N{{\mathbb N}}
\def\R{{\mathbb R}}
\def\Z{{\mathbb Z}}
\def\Sph{{\mathbb S}}
\def\Tor{{\mathbb T}}
\def\Disk{{\mathbb D}}
\def\Fl{{\mathbb F}}

\def\H{{\mathbb H}}
\def\RP{{\mathbb R}{\mathrm{P}}}
\def\dS{{\mathrm d}{\mathbb{S}}}

\def\phi{\varphi}
\def\epsilon{\varepsilon}
\def\V{{\mathcal V}}
\def\E{{\mathbb E}}
\def\F{{\mathcal F}}
\def\C{{\mathcal C}}
\def\K{{\mathcal K}}

\renewcommand{\span}{\operatorname{span}}
\newcommand{\ang}{\operatorname{ang}}
\newcommand{\area}{\operatorname{area}}
\newcommand{\cone}{\operatorname{cone}}
\newcommand{\pr}{\operatorname{pr}}
\newcommand{\vol}{\operatorname{vol}}
\newcommand{\covol}{\operatorname{covol}}
\newcommand{\st}{\operatorname{st}}
\newcommand{\ost}{\operatorname{st}^\circ}
\newcommand{\inn}{\operatorname{int}}
\renewcommand{\d}{\operatorname{d}}

\def\dist{\mathrm{dist\,}}
\def\diam{\mathrm{diam\,}}

\def\M{{\mathcal M}}
\def\Mt{{\mathcal M}_{\mathrm{tr}}}
\def\T{{\mathcal T}}
\def\Vt{V_{\mathrm{tr}}}

\title{Fuchsian convex bodies: basics of Brunn--Minkowski theory}
\author{Fran\c{c}ois Fillastre\\
University of Cergy-Pontoise\\ UMR CNRS 8088\\ Departement of Mathematics\\
F-95000 Cergy-Pontoise\\ FRANCE\\francois.fillastre@u-cergy.fr}

\date{\today, v2}

\maketitle
\begin{abstract}
The hyperbolic space $ \H^d$ can be defined as a pseudo-sphere in the $(d+1)$ Minkowski space-time.
In this paper, a Fuchsian group $\Gamma$ is a group of linear isometries of the Minkowski space such that 
$\H^d/\Gamma$ is a compact manifold.
We introduce Fuchsian convex bodies, which are closed convex sets in Minkowski space, 
globally invariant for the action of a Fuchsian group.
A volume can be associated to each Fuchsian convex body, and, if the group is fixed, Minkowski addition behaves well.
Then Fuchsian convex bodies can be studied in the same manner as convex bodies of Euclidean space
in
the classical Brunn--Minkowski theory. 
For example, support functions can be defined, as functions on a 
compact hyperbolic manifold instead of the sphere.

The main result is the convexity of the associated volume (it is log concave in the classical setting).
This implies analogs of Alexandrov--Fenchel and Brunn--Minkowski  inequalities. Here the inequalities are reversed.
\end{abstract}

\tableofcontents

\section{Introduction}

There are two main motivations behind the definitions and results presented here. See next section
for a precise definition of Fuchsian convex  bodies, the main object of this paper, and 
Fuchsian convex  surfaces (boundaries of Fuchsian convex  bodies).

The first motivation is to show that
 the geometry of Fuchsian convex  surfaces in the Minkowski space 
is the right analogue of the classical geometry of convex compact hypersurfaces in the Euclidean space.
In the present paper, we show the analogue 
of the basics results of what is called Brunn--Minkowski theory. Roughly speaking, the matter is to study the relations between the sum and 
the volume of the bodies under consideration. Actually here we associate to each convex set 
the volume  of another region of the space, determined by the convex set, so we will call it the \emph{covolume}
of the convex set.
This generalization is as natural as, for example, going from the round sphere to compact hyperbolic surfaces.  
To strengthen this idea, existing results can be put into perspective. Indeed, 
Fuchsian convex surfaces are not new objects. 
As far I know, smooth Fuchsian hypersurfaces appeared in \cite{OS83}, see Subsection~\ref{sub:mink reg}.
The simplest examples of convex Fuchsian surfaces are convex hulls of the orbit of one point for the action of the Fuchsian
group. They were considered in \cite{NP91}, in relation with the seminal papers \cite{pen87,EP88}. See also \cite{CDM97}.
The idea is to study hyperbolic problems via the extrinsic structure given by the Minkowski space. For a recent illustration
see \cite{EGM11}.
The first study of Fuchsian surfaces for their own  is probably \cite{LS00}.
 The authors proved that for any
Riemannian metric on a compact surface of genus $\geq 2$ with negative curvature, there exists an isometric
 convex Fuchsian
surface in the $2+1$-Minkowski space, up to a quotient. In the Euclidean case, the analog problem is known as Weyl problem.
A uniqueness result is also given. 
This kind of result about realization of abstract metrics by (hyper)surfaces
invariant under a group action seems to go back to former papers of F.~Labourie and to \cite{Gro86}.
The polyhedral analog of \cite{LS00} is considered in \cite{Fil11}. An important intermediate result, about 
polyhedral infinitesimal rigidity in $d=2$, was proved in \cite{Sch07} (Fuchsian analogue of Dehn theorem).
More recently, a Fuchsian analogue of the ``Alexandrov prescribed curvature problem''
 was proved in \cite{Ber10}. 
The proof uses optimal mass transport. A refinement of this result in the polyhedral $d=2$ case was obtained in
\cite{isk00}. A solution for  the Christoffel problem (prescribed sum
of the radii of curvature in the regular case) for Fuchsian convex bodies will be given in \cite{fv}
as well as for more general convex sets in the Minkowski space (with or without group action),
similarly to \cite{LLS06}.

The second motivation is that,  up to a quotient, the results presented here are  about
the covolume defined by convex Cauchy surfaces in the simplest case of flat Lorentzian manifolds, namely the quotient
of the interior of the future cone by a Fuchsian group.
It is relevant to consider them in a larger class of flat Lorentzian manifolds, known
as maximal globally hyperbolic Cauchy-compact flat spacetimes. 
They were considered in the seminal paper \cite{Mes07}, see \cite{Mes07+}
and \cite{Bar05,Bon05}. Roughly speaking, one could consider hypersurfaces in the Minkowski space 
invariant under a group of isometries 
whose set of linear isometries forms a Fuchsian group (translations are added). 
In $d=2$, for such smooth strictly convex surfaces, a Minkowski theorem (generalizing Theorem~\ref{thm:alg lin det} in this dimension)  was proved recently in  \cite{BBZ10}.
Maybe some of the basic objects introduced in the present paper could be extended to the point to these manifolds.

The paper is organized as follows. Section~\ref{sec:def} introduces, among main definitions, the tool 
to study (Fuchsian) convex bodies, the support functions. The case of the $C^2_+$ Fuchsian convex bodies
(roughly speaking, the ones with a sufficiently regular boundary) 
is treated in Section~\ref{sec:reg} and the one of polyhedral Fuchsian convex bodies in Section~\ref{sec:pol}. These
two sections are independent. In Section~\ref{sec:gen} the general results are obtained by polyhedral approximation.
It appears that  the proofs of the main results, even though very analogous to the classical ones,  
are simpler than in the Euclidean case.

\subsection*{Acknowledgment}

The author would like to thank Stephanie Alexander, Thierry Barbot, Francesco Bonsante, Bruno Colbois, Ivan Izmestiev, 
Yves Martinez-Maure, Joan Porti, Jean-Marc Schlenker, Graham Smith,
 Rolf Schneider and Abdelghani Zeghib
for attractive discussions about the content of this paper. The author thanks the anonymous referee for his/her comments and suggestions.

Work supported by the ANR GR Analysis-Geometry.

\section{Definitions}\label{sec:def}

\subsection{Fuchsian convex bodies}

The Minkowski space-time of dimension $(d+1)$, $d\geq 1$,  is  
$\mathbb{R}^{d+1}$ endowed with the symmetric bilinear form
$$\langle x,y\rangle_-=x_1y_1+\cdots+x_dy_d-x_{d+1}y_{d+1}.$$
We will denote by $\mathcal{F}$ the interior of the future cone of the origin. It is the set of future time-like 
 vectors: the set of $x$ such that $\langle x,x\rangle_-<0$ (time-like) and the last coordinate of
$x$ for the standard basis is positive (future).
The  pseudo-sphere
contained in $\mathcal{F}$ at distance $t$ from the origin of $\R^{d+1}$ is
$$\H_t^d=\{x\in \R^{d+1}\vert \langle x,x\rangle_-=-t^2, x_{d+1}>0\}. $$ 
All along the paper we identify $\H_1^d$ with the hyperbolic space $\mathbb{H}^d$. 
In particular the isometries of $\H^d$ are identified with the linear isometries of 
the Minkowski space keeping $\H_1^d$ invariant \cite[A.2.4]{BP92}. Note that for any point 
$x\in \mathcal{F}$, there exists $t$ such that $x\in \H_t^d$.
\begin{definition}\label{def: fuchsian body}
A \emph{Fuchsian group} is a subgroup of the  linear isometries group of 
 $\mathbb{R}^{d+1}$,  fixing setwise $\mathcal{F}$ and acting freely cocompactly on  $\mathbb{H}^d$ 
(i.e.~$\H^d/\Gamma$ is a compact manifold).

 A \emph{Fuchsian convex body} is the data of a  convex closed proper subset $K$ of $\mathcal{F}$,
together with a Fuchsian group $\Gamma$, such that $\Gamma K=K$.  
A \emph{$\Gamma$-convex body} is a Fuchsian convex body with Fuchsian group $\Gamma$.

A \emph{Fuchsian convex surface} is the boundary of a Fuchsian convex body.
\end{definition}

A Fuchsian convex body has to be thought as the analogue of
a  convex body (compact convex set), with the compactness condition replaced by 
a ``cocompactness'' condition 
(we will see that a Fuchsian convex body is never bounded). Joan Porti pointed out to the author
that what is done in this paper is probably true without the requirement that the group has no torsion.

We will adapt the classical theory to the Fuchsian case. For that one we mainly follow \cite{Sch93}.

\paragraph{Examples}

The simplest examples of Fuchsian convex surfaces are the $\H_t^d$ 
(note that all Fuchsian groups act freely and cocompactly on $\H_t^d$). Their convex sides  are
Fuchsian
 convex bodies, denoted by $B^d_t$, and $B^d_1$ is sometimes denoted by $B^d$ or $B$. This example shows that a given convex set can be a Fuchsian convex body
for many Fuchsian groups.

Given a Fuchsian group $\Gamma$, we will see in the remaining of the paper two ways of constructing convex Fuchsian
bodies. First, given a finite number of points in $\F$, the convex hull of their orbits for $\Gamma$
is a Fuchsian convex body, see Subsection~\ref{sub:pol}, where a dual construction is introduced.
Second, we will see in Subsection~\ref{sub: reg sup} that any function on the compact hyperbolic manifold
$\H^d/\Gamma$
satisfying a differential relation corresponds to a Fuchsian convex body.
Hence the question of examples reduces to the question of finding the group $\Gamma$, that implies to find compact hyperbolic
manifolds.
Standard concrete examples of compact hyperbolic manifolds can be easily found in the literature
about hyperbolic manifolds. For a general construction in any dimension see \cite{GPS88}. 

Notwithstanding it is not obvious to get explicit generators.
Of course the case $d=1$ is totally trivial as a Fuchsian group is generated by a boost
$\left(
    \begin{array}{cc}
       \cosh t  & \sinh t \\
   \sinh t & \cosh t
    \end{array}
\right),$ for a non-zero real $t$. For $d=2$, explicit generators can be constructed following \cite{mas01}.
For Figure~\ref{fig:polyhedron} and a computation at the end of the paper (the figure comes from 
a part of a Fuchsian convex body that can be manipulate on the author's webpage), the group is the simpliest
acting on $\H^2$, namely the one having a regular octagon as fundamental domain
in a disc model. Generators are given in \cite{kat92}.

\paragraph{Remark on the signature of the bilinear form}

The classical theory of convex bodies uses the usual scalar product on $\R^{d+1}$. Here we used 
the usual bilinear form of signature $(d,1)$. A natural question is to ask what happens 
if we consider a bilinear form of signature $(d+1-k,k)$. (Obviously, the vector structure, 
the volume, the Levi-Civita connection (and hence the geodesics), the topology and the notion of convexity don't depend
on the signature. Moreover, any linear map preserving the bilinear form is of determinant one, hence preserves the volume.)

Let us consider first the case of 
the usual bilinear form with signature $(d-1,2)$ ($d\geq 3$). The set of vectors of pseudo-norm $-1$ is a model of the 
Anti-de Sitter space, which is the Lorentzian analogue of the Hyperbolic space. First of all,
we need groups of linear isometries acting cocompactly on the Anti-de Sitter space. They exist 
only in odd dimensions \cite{BZ04}. Moreover, Anti-de Sitter space does not bound a convex set.

Finally, another interest of the present construction is that, as noted in the introduction, some objects introduced here
could serve to study some kind of flat Lorentzian manifolds (with compact Cauchy surface), which can themselves
be related to some problems coming from General Relativity. It is not clear if 
as many attention is given to pseudo-Riemannian manifolds with different signatures.

\subsection{Support planes}

For  a subset $A$ of $\mathbb{R}^{d+1}$, a \emph{support plane} of $A$ at $x$
is an hyperplane $\mathcal{H}$ with 
$x\in A\cap \mathcal{H}$  and
$A$  entirely contained in one side of $\mathcal{H}$.

\begin{lemma}\label{lem: future convex}
Let $K$ be a $\Gamma$-convex body. Then
\begin{enumerate}[nolistsep,label={\bf(\roman{*})}, ref={\bf(\roman{*})}]
 \item $K$ is  not contained in
a codimension $>0$ plane. \label{nonvide}
\item $K$ is \emph{future convex}:\label{futureconvex}
  \begin{enumerate}[nolistsep,label=(\alph{*}), ref=\ref{futureconvex}\textit{(\alph{*})}]
  \item through each boundary point  there is a support plane; \label{support}
  \item all support planes are space-like;\label{suppspace}
  \item $K$ is contained in the future side of its support planes.\label{future}
  \end{enumerate}
\end{enumerate}
\end{lemma}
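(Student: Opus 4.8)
The plan is to extract from cocompactness one uniform geometric fact and feed it into both parts. Fix a point $p\in K$. Since linear isometries preserve $\langle\cdot,\cdot\rangle_-$, we have $p\in\H^d_{t_0}$ with $t_0=\sqrt{-\langle p,p\rangle_-}$, and the whole orbit lies in $\Gamma p\subset K\cap\H^d_{t_0}$. Because $\Gamma$ acts cocompactly on $\H^d=\H^d_1$ and the linear scaling $y\mapsto t_0 y$ is $\Gamma$-equivariant, $\Gamma$ acts cocompactly on $\H^d_{t_0}$ as well; hence $\Gamma p$ is an $R$-dense net, meeting every metric ball of radius $R$ in $\H^d_{t_0}$, where $R$ is the diameter of a fundamental domain. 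The mechanism I will use repeatedly is that a linear functional $f=\langle\cdot,n\rangle_-$ with $n\neq 0$ is always \emph{unbounded} on $\H^d_{t_0}$ (it is unbounded below when $n$ is future time-like and unbounded above otherwise), and that when $\sup_{\H^d_{t_0}}f=+\infty$ the super-level set $\{f>c\}$ contains hyperbolic balls of arbitrarily large radius.

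For \ref{nonvide}, since every codimension-$>0$ plane lies in an affine hyperplane, it suffices to show $\Gamma p$ is not contained in $\{x:\langle x,n\rangle_-=c\}$ for any $n\neq 0$. If it were, then $f=\langle\cdot,n\rangle_-$ would be constantly equal to $c$ on $\Gamma p$. But $f$ is unbounded on $\H^d_{t_0}$ in at least one direction, so one of its level sets $\{f>c'\}$ or $\{f<c'\}$ contains a ball of radius $>R$; by $R$-density the net $\Gamma p$ meets it, giving a point of the orbit on which $f\neq c$, a contradiction. Hence $K$ has full affine span, and in particular nonempty interior.

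For \ref{futureconvex}, part \ref{support} is immediate: by \ref{nonvide} the closed convex set $K$ is full-dimensional, so the classical supporting hyperplane theorem produces a support plane $\mathcal{H}$ at each boundary point $p$ (note $p\in K\subset\F$, so $p$ is future time-like). Orient the normal $n$ so that $K\subseteq\{x:\langle x,n\rangle_-\le c_0\}$ with $c_0=\langle p,n\rangle_-$; then $f=\langle\cdot,n\rangle_-$ satisfies $f\le c_0$ on $\Gamma p\subset\H^d_{t_0}$. I now run the dichotomy on the type of $n$. If $n$ is future time-like, reverse Cauchy--Schwarz gives $f\le -t_0\sqrt{-\langle n,n\rangle_-}<0$ on all of $\H^d_{t_0}$, so $\mathcal{H}$ is space-like; moreover for such $n$ the inequality $\langle w,n\rangle_-<0$ for future directions $w$ shows $\{f\le c_0\}$ is exactly the closed future side, giving \ref{suppspace} and \ref{future}. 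If instead $n$ is space-like, light-like, or past-directed, then $\sup_{\H^d_{t_0}}f=+\infty$, so $\{f>c_0\}$ contains a ball of radius $>R$; by $R$-density some $\gamma p$ lies in it, whence $f(\gamma p)>c_0$, contradicting $f\le c_0$ on the orbit. Thus every support plane has future time-like normal, completing \ref{futureconvex}.

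The step I expect to require the most care is this ball argument: verifying that whenever $\langle\cdot,n\rangle_-$ is unbounded above on $\H^d_{t_0}$ its super-level sets genuinely contain hyperbolic balls of every radius. This is transparent for space-like $n$, where $\{f>c_0\}$ is an unbounded neighborhood of a totally geodesic hyperplane, and for future null $n$, where it is the exterior of a horoball (which always contains arbitrarily large balls, being disjoint only from a horoball); the past-directed cases reduce to these. Once the net-versus-unbounded-functional mechanism is set up, both \ref{nonvide} and the classification of support planes follow from it, while \ref{support} is purely the classical supporting hyperplane theorem applied to the full-dimensional body $K$.
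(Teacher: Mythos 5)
Your strategy---playing an $R$-dense orbit in $\H^d_{t_0}$ against regions of $\H^d_{t_0}$ that contain balls of radius $>R$---is in substance a quantitative version of the paper's own proof, which plays the orbit against totally geodesic hyperplanes, hyperspheres and horospheres via Dirichlet regions and Sublemma~\ref{sublem: horo}. Part \ref{nonvide}, the existence of support planes, and the space-like and past-directed cases of your dichotomy are all fine. But the one case where the lemma is genuinely delicate---a support plane whose normal $n$ is \emph{future light-like}---is exactly the case your proof gets wrong. You assert that $\sup_{\H^d_{t_0}}\langle\cdot,n\rangle_-=+\infty$ whenever $n$ is not future time-like; this is false for future light-like $n$: then $\langle y,n\rangle_-<0$ for every $y\in\H^d_{t_0}$ (Sublemma~\ref{lem:elementary}), so $f=\langle\cdot,n\rangle_-$ is bounded above by $0$, and it is in fact unbounded \emph{below}, contrary to your parenthetical classification. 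Your fallback description in the last paragraph is also backwards: with $c_0=\langle p,n\rangle_-<0$, the set $\{f>c_0\}=\{\,c_0<f<0\,\}$ is an open \emph{horoball} centered at the ideal point of $n$ (this is exactly how horoballs are written in the proof of Sublemma~\ref{sub:horo}), not the exterior of a horoball. So, as written, the step that rules out light-like support planes rests on two false statements.

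The gap is local and repairable, because the conclusion of that case is still true: an open horoball does contain hyperbolic balls of arbitrarily large radius (center them on a geodesic ray toward the ideal center), so the $R$-dense orbit $\Gamma p$ meets $\{f>c_0\}$, contradicting $f\le c_0$ on $\Gamma p$. This corrected statement is precisely the content of the paper's Sublemma~\ref{sublem: horo}, namely that the orbit meets the interior of every horoball. With that single case rewritten---treating future light-like normals separately from space-like and past-directed ones, and invoking the horoball property instead of unboundedness of $f$---your argument becomes correct and coincides in substance with the paper's proof of \ref{suppspace} and \ref{future}.
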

\begin{proof}
 By definition $K$ is not empty.
Let $x\in K$. As $K\subset \mathcal{F}$, there exists a $t$ such 
that $x\in \H_t^d$, and by definition, all the elements of the orbit $\Gamma x$ of $x$
belong to $K\cap \H_t^d$. Suppose that $K$ is contained in a codimension 
$>0$ hyperplane
$\mathcal{H}$. Then there would exist a codimension $1$ hyperplane
$\mathcal{H}'$ with $\mathcal{H}\subset \mathcal{H}'$, and $\Gamma x\in \mathcal{H}' \cap \H_t^d $.
This means that  
on $\H_t^d $ (which is homothetic to the hyperbolic space for the induced metric),
 $\Gamma x$ is contained in a totally geodesic hyperplane, a hypersphere or a horosphere (depending
on $\mathcal{H}'$ to be time-like, space-like or light-like), that is clearly impossible. \ref{nonvide} is proved.

\ref{support} is a general property of convex closed subset of $\mathbb{R}^{d+1}$ \cite[1.3.2]{Sch93}.

Let $x\in K$ and let $\mathcal{H}$ be the support plane of $K$ at $x$. There exists $t$ such that $\Gamma x
\subset  \H_t^d$, and all elements of  $\Gamma x$ must be  on one side  of 
$\mathcal{H}\cap \H_t^d$ on $\H_t^d$. Clearly $\mathcal{H}\cap \H_t^d$ can't be a 
totally geodesic hyperplane (of $\H_t^d$), and it can't either be  a horosphere
by Sublemma~\ref{sublem: horo}. 
Hence $\mathcal{H}$ must be space-like, that gives  \ref{suppspace}. The fact that  all elements of  $\Gamma x$ belong to $\H_t^d$
implies that $K$ is in the future side of its support planes, hence \ref{future}.
\end{proof}
\begin{sublemma}\label{sublem: horo}
 Let $\Gamma$ be a group of isometries acting cocompactly on the hyperbolic space $\H^d$.
For any $x\in\H^d$, the orbit $\Gamma x$ meets the interior of any horoball.
\end{sublemma}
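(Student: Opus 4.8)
The plan is to combine two elementary facts: cocompactness forces the orbit $\Gamma x$ to be a \emph{net} in $\H^d$, while every horoball contains hyperbolic balls of arbitrarily large radius. Once both are in hand, one simply places a sufficiently large ball inside the given horoball and catches an orbit point in it.

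First I would quantify cocompactness. Since $\Gamma$ acts cocompactly, the quotient $\H^d/\Gamma$ is compact, so it has finite diameter; set $R=\diam(\H^d/\Gamma)$. For any point $p\in\H^d$, the distance from $p$ to the orbit equals the distance in the quotient between the classes of $p$ and $x$, that is $\mathrm{dist}(p,\Gamma x)=d_{\H^d/\Gamma}(\bar p,\bar x)\le R$. Because $\Gamma x$ is a discrete closed set, this infimum is attained, so there is some $\gamma\in\Gamma$ with $d(p,\gamma x)\le R$. Thus the orbit $\Gamma x$ is $R$-dense: every point of $\H^d$ lies within distance $R$ of $\Gamma x$.

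Next I would record the geometric fact about horoballs. Fix a horoball $H$ with ideal center $\xi$, described as a sublevel set of the corresponding Busemann function. Along a geodesic ray pointing to $\xi$, the Busemann depth inside $H$ tends to infinity, so $H$ is the increasing union of the geodesic balls internally tangent to its horosphere at $\xi$, and these balls exhaust the interior of $H$ as their radii grow without bound. In particular $\inn(H)$ contains a hyperbolic ball $B(p,R+1)$ of radius $R+1$ for a suitable center $p$.

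Finally I would conclude: by $R$-density there is $\gamma\in\Gamma$ with $d(p,\gamma x)\le R<R+1$, so $\gamma x\in B(p,R+1)\subset\inn(H)$, and the orbit meets the interior of $H$. I expect the only point requiring care to be the exhaustion statement for horoballs, i.e. making precise that the interior of a horoball contains balls of every radius; this is where a clean description via the Busemann function (or via the upper half-space model, where horoballs tangent to the boundary obviously contain Euclidean—hence hyperbolic—balls of arbitrarily large radius) keeps the argument short and avoids any computation.
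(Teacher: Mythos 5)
Your proof is correct, but it takes a different route from the paper. The paper disposes of the sublemma in two lines of citations: cocompactness implies the Dirichlet regions \eqref{eq:dirichelt} for the orbit $\Gamma x$ are bounded (citing \cite{Rat06}), and then it invokes \cite[Lemma~3]{CDM97}, which states precisely that a discrete set with bounded Dirichlet regions meets the interior of every horoball. You instead prove everything from scratch: cocompactness makes the orbit an $R$-net, horoballs contain hyperbolic balls of every radius, and a ball of radius $R+1$ inside the horoball must capture an orbit point. The underlying geometry is essentially the same --- saying the Dirichlet regions are uniformly bounded is equivalent to saying the orbit is a net --- but where the paper outsources the horoball half of the argument to the literature, you supply it yourself. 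What your version buys is self-containedness and slightly greater generality (you never need discreteness of $\Gamma$, nor attainment of the infimum: $d(p,\gamma x)\le R< R+1$ suffices); what the paper's version buys is brevity. Two small polish points if you write yours up: the exhaustion of a horoball by inscribed balls is cleanest via the fact that Busemann functions are $1$-Lipschitz (if $b_\xi(p)<c-r$ then the ball $B(p,r)$ lies in the horoball $\{b_\xi<c\}$), which avoids the tangent-ball description; and the tangency you describe occurs at a point of the horosphere, not ``at $\xi$''.
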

\begin{proof}
As the action of $\Gamma$ on $\mathbb{H}^d$ is cocompact, it is well-known that the orbit $\Gamma x$ is discrete and that the Dirichlet regions
for $\Gamma x$
\begin{equation}\label{eq:dirichelt}
D_a(\Gamma)=\{p\in\mathbb{H}^d\vert d(a,p)\leq d(\gamma a,p), 
\forall \gamma\in\Gamma\setminus\{Id\} \}, a\in\Gamma x\end{equation}
where $d$ is the hyperbolic distance,  are bounded \cite{Rat06}.
The sublemma is a characteristic property of discrete sets with bounded Dirichlet regions \cite[Lemma~3]{CDM97}.
\end{proof}

\begin{lemma}\label{lem:conitude}
 Let $K$ be a $\Gamma$-convex body and $x\in K$. For any $\lambda \geq 1$, $\lambda x\in K$.
\end{lemma}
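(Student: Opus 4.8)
The plan is to use the future convexity of $K$ established in Lemma~\ref{lem: future convex}, together with the fact that a closed convex set is cut out by its supporting half-spaces. Since $K$ is a closed convex proper subset of $\mathcal{F}$, it equals the intersection of all its supporting half-spaces, each of which (by Lemma~\ref{lem: future convex}\ref{suppspace} and \ref{future}) is bounded by a space-like plane and has $K$ on its future side. Hence it suffices to verify that $\lambda x$ lies on the future side of \emph{every} support plane of $K$. Equivalently, if one prefers to argue by contradiction: should $\lambda x\notin K$ for some $\lambda\geq 1$, the nearest-point projection of $\lambda x$ onto the closed convex set $K$ produces a genuine support plane $\mathcal{H}$ with $K$ on one side and $\lambda x$ strictly on the other, and I would derive a contradiction from the estimate below.

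Fix a support plane $\mathcal{H}$ of $K$. By Lemma~\ref{lem: future convex}\ref{suppspace} it is space-like, so it admits a future-directed time-like unit normal $n$ and can be written as $\mathcal{H}=\{y:\langle y,n\rangle_-=c\}$. Orienting $n$ towards the future, the future side of $\mathcal{H}$ is $\{y:\langle y,n\rangle_-\leq c\}$ (a future-pointing displacement decreases the last-coordinate pairing, hence lowers $\langle\cdot,n\rangle_-$). Since $x\in K\subset\mathcal{F}$, both $x$ and $n$ are future-directed time-like vectors, and the reversed Cauchy--Schwarz inequality for the Lorentzian form gives $\langle x,n\rangle_-<0$. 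Moreover $x\in K$ gives $\langle x,n\rangle_-\leq c$.

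Now the conclusion is immediate: for $\lambda\geq 1$,
\[
\langle \lambda x,n\rangle_- \;=\; \lambda\,\langle x,n\rangle_- \;=\; \langle x,n\rangle_- + (\lambda-1)\langle x,n\rangle_- \;\leq\; \langle x,n\rangle_- \;\leq\; c,
\]
because $(\lambda-1)\geq 0$ while $\langle x,n\rangle_-<0$. Thus $\lambda x$ sits on the future side of $\mathcal{H}$, and since $\mathcal{H}$ was an arbitrary support plane, $\lambda x$ belongs to $K$.

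The routine estimate is trivial; the two steps that deserve care are the ones I expect to be the actual content. First, one must nail the orientation convention so that ``$K$ on the future side'' translates into the correct inequality $\langle\cdot,n\rangle_-\leq c$ for a future-directed normal. Second, and most importantly, the sign $\langle x,n\rangle_-<0$ rests on the reversed Cauchy--Schwarz inequality (the statement that the Lorentzian pairing of two future time-like vectors is strictly negative); this is where the Lorentzian signature does the real work, and it is precisely the feature that makes the Fuchsian picture behave oppositely to the Euclidean one. A minor bookkeeping point is the justification that membership in all supporting half-spaces forces membership in $K$, which is the standard representation of a proper closed convex set as the intersection of its supporting half-spaces (equivalently, the separation argument via nearest-point projection sketched above).
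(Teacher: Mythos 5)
Your proof is correct and rests on exactly the same ingredients as the paper's: Lemma~\ref{lem: future convex} (support planes are space-like and $K$ lies in their future) together with the negativity of the Lorentzian pairing $\langle x,n\rangle_-$ of two future time-like vectors. The only difference is packaging --- the paper argues by contradiction via the support plane at the point where the radial ray exits $K$ (which would put $x\in K$ in its past), while you run the same sign computation directly through the representation of $K$ as the intersection of its supporting half-spaces (legitimate here since $K$ is closed with non-empty interior).
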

\begin{proof}
 From the definition of $K$, it is not hard to see that it has non empty interior.
And as $K$ 
is closed,  if the lemma was false, there would exist a point on the boundary of $K$ 
and a support plane at this point such that $x$ in its past, that is impossible because of
 Lemma~\ref{lem: future convex}.
\end{proof}

Let us recall the following elementary results, see e.g.~\cite[3.1.1,3.1.2]{Rat06}.
\begin{sublemma}\label{lem:elementary}
 \begin{enumerate}[nolistsep,label={\bf(\roman{*})}, ref={\bf(\roman{*})}]
  \item If $x$ and $y$ are nonzero non space-like vectors in $\mathbb{R}^{d+1}$, 
both past or future, then $\langle x,y\rangle_-\leq 0$ with equality if and only if $x$ and $y$ are
linearly dependent light-like vectors. \label{elem1}
\item If $x$ and $y$ are nonzero non space-like vectors in $\mathbb{R}^{d+1}$, 
both past (resp. future), then the vector $x + y$ is past (resp. future) non space-like.
Moreover $x+y$ is light-like if and only if $x$ and $y$ are linearly dependent
light-like vectors. \label{elem2}
 \end{enumerate}
\end{sublemma}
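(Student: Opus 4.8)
The plan is to reduce both statements to the Cauchy--Schwarz inequality applied to the spatial parts of the vectors. Write $x=(\ol x, x_{d+1})$ and $y=(\ol y,y_{d+1})$, where $\ol x,\ol y\in\R^d$ are the first $d$ coordinates and $x_{d+1},y_{d+1}$ the last; let $\langle\,\cdot\,,\cdot\,\rangle$ and $\modul{\cdot}$ denote the ordinary Euclidean inner product and norm on $\R^d$. By definition of non space-like one has $\modul{\ol x}^2\leq x_{d+1}^2$ and $\modul{\ol y}^2\leq y_{d+1}^2$, and since both vectors are future (say), $x_{d+1},y_{d+1}>0$, whence $\modul{\ol x}\leq x_{d+1}$ and $\modul{\ol y}\leq y_{d+1}$. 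The past case is identical after changing signs.

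For \ref{elem1}, I would simply compute
\[
\langle x,y\rangle_-=\langle \ol x,\ol y\rangle-x_{d+1}y_{d+1}
\leq \modul{\ol x}\,\modul{\ol y}-x_{d+1}y_{d+1}\leq 0,
\]
the first inequality being Cauchy--Schwarz and the second following from the two displayed norm bounds (all quantities being nonnegative). For the equality case, both inequalities must be equalities. Equality in the second one forces $\modul{\ol x}=x_{d+1}$ and $\modul{\ol y}=y_{d+1}$, i.e.\ $x$ and $y$ are light-like; equality in Cauchy--Schwarz forces $\ol x$ and $\ol y$ to be positively proportional. Since a future light-like vector is determined by its nonzero spatial part via $x=(\ol x,\modul{\ol x})$, these two conditions together say exactly that $x$ and $y$ are linearly dependent light-like vectors, and conversely such a pair clearly saturates the inequality.

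Statement \ref{elem2} then follows by expanding the quadratic form. The last coordinate of $x+y$ is $x_{d+1}+y_{d+1}>0$, so $x+y$ is future. Moreover
\[
\langle x+y,x+y\rangle_-=\langle x,x\rangle_-+2\langle x,y\rangle_-+\langle y,y\rangle_-,
\]
and each of the three terms is $\leq 0$ (the outer two by the non space-like hypothesis, the middle one by \ref{elem1}); hence $x+y$ is non space-like. Finally $x+y$ is light-like precisely when this sum vanishes, which, the three summands being nonpositive, happens if and only if each vanishes separately: $x$ and $y$ are light-like and $\langle x,y\rangle_-=0$. By the equality clause of \ref{elem1} this is equivalent to $x$ and $y$ being linearly dependent light-like vectors.

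The argument is entirely elementary; the only point requiring care is the equality analysis in \ref{elem1}, where one must correctly combine the Cauchy--Schwarz equality condition with the saturation of the norm bounds to recover precisely linear dependence, and then reuse this same clause to pin down the light-like case of \ref{elem2}.
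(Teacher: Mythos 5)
Your proof is correct, and it is essentially the standard argument: the paper itself does not prove this sublemma but simply cites \cite[3.1.1,3.1.2]{Rat06}, where the proof proceeds exactly as you do, by splitting off the spatial part, applying Cauchy--Schwarz, and tracking the equality cases. Your reduction of the light-like clause in \ref{elem2} to the equality clause of \ref{elem1} via the expansion of $\langle x+y,x+y\rangle_-$ into three nonpositive terms is also the standard route, so nothing further is needed.
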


A future time-like vector $\eta$ orthogonal to a support plane at $x$ of a future convex set $A$ 
is called an \emph{inward normal} of 
$A$ at $x$. This means that $\forall y\in A$, $y-x$ and $\eta$ are two future time-like vectors at the point $x$, then
by Sublemma~\ref{lem:elementary},
$$ \forall y \in A, \langle \eta,y-x\rangle_- \leq 0, \mbox{i.e.~}\langle \eta,y\rangle_- \leq \langle \eta,x \rangle_- $$
or equivalently the sup on all $y\in A$ of  $\langle \eta,y \rangle_-$ is attained at $x$.  
Notice that the set 
$$\{y\in\mathbb{R}^{d+1}\vert \langle y,\eta\rangle_-=\langle x,\eta\rangle_-\} $$
 is the support hyperplane of $A$ at $x$ with inward normal $\eta$.

\begin{lemma}\label{lem: tout vect normal}
 Let $K$ be a $\Gamma$-convex body. For any 
future time-like vector $\eta$,  $\mbox{sup}\{\langle x,\eta\rangle_- \vert x\in K\}$ exists, is attained
at a point of $K$ and is negative. In particular any future time-like vector $\eta$
is an inward normal of $K$.

A future time-like vector $\eta$ is the inward normal of a single support hyperplane of $K$.
\end{lemma}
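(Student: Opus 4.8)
The plan is to recognize the quantity $\sup\{\langle x,\eta\rangle_- : x\in K\}$ as, up to a positive factor, the minimal ``height'' of $K$ once $\eta$ has been normalized, and to establish attainment by a coercivity argument coming from the inclusion $K\subset\mathcal{F}$. First note that every individual value is already negative: if $x\in K\subset\mathcal{F}$, then $x$ and $\eta$ are both future time-like, so Sublemma~\ref{lem:elementary}\ref{elem1} gives $\langle x,\eta\rangle_-<0$ (the equality case is excluded since neither vector is light-like). Hence the supremum is $\leq 0$ and, $K$ being nonempty, it is a well-defined real number in $(-\infty,0]$; its negativity will follow automatically once we know it is attained. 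So the whole point is attainment.

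To prove attainment I would first replace $\eta$ by $e_{d+1}=(0,\dots,0,1)$. Since $\eta$ is future time-like there is an orthochronous linear isometry $g$ of Minkowski space with $g\eta$ a positive multiple of $e_{d+1}$; from $\langle x,\eta\rangle_-=\langle gx,g\eta\rangle_-$ one sees that $\sup_{x\in K}\langle x,\eta\rangle_-$ equals, up to the positive factor $\sqrt{-\langle\eta,\eta\rangle_-}$, the quantity $-\inf_{y\in gK}y_{d+1}$. As $g$ is orthochronous, $gK$ is again a closed convex subset of $\mathcal{F}$ (we never need it to be invariant under a group). Thus it suffices to show that the last coordinate attains a positive minimum on any closed convex $L\subset\mathcal{F}$.

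Here is the coercivity step, which I expect to be the only genuine obstacle, since $K$ (hence $L$) is unbounded. Take a minimizing sequence $y^{(n)}\in L$ with $y^{(n)}_{d+1}\to m:=\inf_L y_{d+1}\geq 0$. Because $y^{(n)}\in\mathcal{F}$ is future time-like, $y^{(n)}_{d+1}>\big((y^{(n)}_1)^2+\cdots+(y^{(n)}_d)^2\big)^{1/2}$; since the left-hand sides are bounded, the spatial parts are bounded, so $(y^{(n)})$ is bounded and, $L$ being closed, has a limit point $y^\ast\in L$ with $y^\ast_{d+1}=m$. Finally $y^\ast\in L\subset\mathcal{F}$ forces $y^\ast_{d+1}>0$, so $m>0$. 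Undoing the normalization, the supremum is attained at a point $x^\ast\in K$ and equals $\langle x^\ast,\eta\rangle_-<0$. (Alternatively one can argue with the recession cone: every nonzero recession direction of $K$ lies in $\overline{\mathcal{F}}\setminus\{0\}$ because $K\subset\mathcal{F}$ (cf.\ Lemma~\ref{lem:conitude}), and on $\overline{\mathcal{F}}\setminus\{0\}$ one has $\langle\,\cdot\,,\eta\rangle_-<0$ by Sublemma~\ref{lem:elementary}\ref{elem1}; this is exactly the standard condition ensuring that a linear functional attains its maximum on a closed convex set.)

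It remains to pass to support hyperplanes and to prove uniqueness. The maximizer $x^\ast$ cannot lie in the interior of $K$, for otherwise $\langle\,\cdot\,,\eta\rangle_-$ would take larger values nearby, it being non-constant on $K$ as $K$ is not contained in a hyperplane (Lemma~\ref{lem: future convex}\ref{nonvide}); hence $x^\ast\in\partial K$, the hyperplane $\mathcal{H}_\eta=\{y:\langle y,\eta\rangle_-=\langle x^\ast,\eta\rangle_-\}$ supports $K$ at $x^\ast$, and $\eta$ is by definition an inward normal of $K$ there. For uniqueness, any support hyperplane with inward normal $\eta$ is a level set $\{\langle\,\cdot\,,\eta\rangle_-=c\}$ with $K$ on the side $\langle\,\cdot\,,\eta\rangle_-\leq c$; the one-sided condition forces $c\geq\sup_K\langle\,\cdot\,,\eta\rangle_-$, while touching $K$ forces $c\leq\sup_K\langle\,\cdot\,,\eta\rangle_-$, so $c$ equals the attained supremum and the supporting hyperplane is unique.
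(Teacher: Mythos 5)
Your proof is correct, and it handles the analytic core by a different route than the paper. Both arguments obtain the upper bound $0$ from Sublemma~\ref{lem:elementary}, and the uniqueness step is essentially identical (two parallel support planes with inward normal $\eta$ must both sit at the level of the supremum, or, in the paper's phrasing, one would lie strictly in the past of the other). The divergence is in how existence and strict negativity of the maximum are obtained for the unbounded set $K$: the paper gets negativity from the separation theorem \cite[1.3.4]{Sch93} applied to $K$ and the vector hyperplane $\eta^{\bot}$ (which meets $\overline{\mathcal{F}}$ only at the origin), and then asserts attainment from closedness of $K$ once the translated hyperplane first meets $K$; you instead normalize $\eta$ to a multiple of $e_{d+1}$ by an orthochronous isometry and run an explicit coercivity argument --- a minimizing sequence for the height function is bounded because the spatial part of a future time-like vector has Euclidean norm smaller than its last coordinate, so it subconverges inside the closed set $gK$, giving attainment and negativity in one stroke. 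Your version is more elementary and self-contained, and it makes explicit the compactness point that the paper's phrase ``as $K$ is closed, the sup is attained'' leaves implicit: closedness alone does not give attainment on an unbounded set, and what saves the day is precisely the boundedness of the slabs $\{x\in\mathcal{F} \mid x_{d+1}\leq c\}$, which is the content of your coercivity step. What the paper's route buys is brevity and reliance on standard convex-geometry machinery rather than a computation; your recession-cone remark is the cleanest ``standard'' packaging of the same fact, since every nonzero recession direction of $K$ lies in $\overline{\mathcal{F}}\setminus\{0\}$, where $\langle\,\cdot\,,\eta\rangle_-$ is strictly negative.
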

\begin{proof}
From \ref{elem1} of Lemma~\ref{lem:elementary}, $\{\langle x,\eta\rangle_- \vert x\in K\}$ is bounded
from above
by zero hence the sup exists. 
The sup is a negative number, as a sufficiently small translation of  the vector hyperplane $\mathcal{H}$
orthogonal to $\eta$ in direction of 
$\F$ does not meet $K$. This follows from the separation theorem \cite[1.3.4]{Sch93}, because
the origin is the only common point between $\mathcal{H}$ and the boundary of $\F$.
As $K$ is closed, the sup is attained when the parallel displacement of $\mathcal{H}$  meets $K$.

Suppose that two different support hyperplanes of $K$ have the same inward normal. Hence one is contained in the past of the other, that
is impossible.
\end{proof}

\subsection{Support functions}

Let $K$ be a $\Gamma$-convex body. The \emph{extended support function} 
$H$ of $K$ is 
\begin{equation}\label{def:sup func}
\forall \eta\in\F, H(\eta)=\mbox{sup}\{\langle x,\eta\rangle_- \vert x\in K\}.
\end{equation}
We know from Lemma~\ref{lem: tout vect normal} that it is a negative function on $\F$.
As an example the extended support function of $B_t^d$ is equal to $-t\sqrt{-\langle \eta,\eta\rangle_-}$.

\begin{definition}
 A function $f:A\rightarrow \mathbb{R}$ on a convex subset $A$ of $\R^{d+1}$ is \emph{sublinear} (on $A$) if it is
\emph{positively homogeneous of degree one}:
\begin{equation}\label{def:pos hom}
  \forall \eta\in A, f(\lambda \eta)=\lambda f(\eta)\, \forall \lambda > 0,
\end{equation}
 and \emph{subadditive}:
\begin{equation}
\forall \eta,\mu\in A, f(\eta+\mu)\leq f(\eta)+f(\mu). 
\end{equation}
\end{definition}
A  sublinear function is convex, in particular  it is continuous (by assumptions it takes only finite values in
$A$). (It is usefull to note that for a positively homogeneous of degree one function, convexity and sublinearity are equivalent.)
It is straightforward from the definition that an extended support function is sublinear and $\Gamma$-invariant. 
It is useful to expand the definition of extended support function to the whole space.
The \emph{total support function} of a $\Gamma$-convex body $K$  is 
\begin{equation}\label{eq:ext supp fct}
\forall \eta\in\R^{d+1}, \tilde{H}(\eta)=\mbox{sup}\{\langle x,\eta\rangle_- \vert x\in K\}. 
\end{equation}
We will consider the total support function for any convex subset of $\R^{d+1}$. The infinite value is allowed. 
We have the following important property, see  \cite[Theorem 2.2.8]{Hor07}. 

\begin{proposition}\label{prop:hormander}
Let  $f$ be a lower semi-continuous, convex and positively homogeneous of degree one
function on $\R^{d+1}$ (the infinite value is allowed). 
The set $$  
F=\{x\in\R^{d+1}\vert \langle x,\eta\rangle_- \leq f(\eta) \,\forall \eta\in\R^{d+1} \}
$$
is a closed convex set with total support function $f$.
\end{proposition}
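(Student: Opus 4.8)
The plan is to split the statement into its two assertions---that $F$ is closed and convex, and that its total support function $\tilde H_F$ equals $f$---and to dispatch the first almost for free, concentrating the effort on the second. Writing $\mathrm{dom}\,f=\{\eta\in\R^{d+1}\mid f(\eta)<\infty\}$, one has
\[
F=\bigcap_{\eta\in\mathrm{dom}\,f}\{x\in\R^{d+1}\mid \langle x,\eta\rangle_-\le f(\eta)\},
\]
an intersection of closed half-spaces (the constraints coming from $\eta\notin\mathrm{dom}\,f$ being vacuous), so $F$ is closed and convex.

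Turning to $\tilde H_F=f$, one inequality is immediate: every $x\in F$ satisfies $\langle x,\eta\rangle_-\le f(\eta)$ by the very definition of $F$, so $\tilde H_F(\eta)=\sup_{x\in F}\langle x,\eta\rangle_-\le f(\eta)$ for all $\eta$. The content is the reverse inequality, and here the plan is to exploit that $\langle\cdot,\cdot\rangle_-$ is nondegenerate: every linear form on $\R^{d+1}$ equals $\langle x,\cdot\rangle_-$ for a unique $x$, and the condition $\langle x,\cdot\rangle_-\le f$ is precisely $x\in F$. Thus the linear forms dominated by $f$ are exactly those attached to points of $F$, and
\[
\tilde H_F(\eta)=\sup\{\ell(\eta)\mid \ell\text{ linear},\ \ell\le f\}.
\]
So the proposition reduces to the classical fact that a lower semi-continuous sublinear function, finite somewhere, is the upper envelope of the linear forms it dominates.

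To establish that fact I would separate on the epigraph. Fix $\eta_0$ and $c<f(\eta_0)$. The epigraph $E=\{(\eta,t)\in\R^{d+1}\times\R\mid t\ge f(\eta)\}$ is closed (lower semicontinuity), convex (convexity of $f$) and a cone with apex at the origin (positive homogeneity forces $f(0)=0$). Since $(\eta_0,c)\notin E$, the separation theorem \cite[1.3.4]{Sch93} provides a linear form $L(\eta,t)=\lambda(\eta)+\beta t$ with $L(\eta_0,c)<0$; because $E$ is a cone through the origin, one automatically gets $L\ge 0$ on all of $E$. Feeding $t\to+\infty$ into $L\ge 0$ forces $\beta\ge 0$. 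In the principal case $\beta>0$, normalizing $\beta=1$ and evaluating $L\ge 0$ at $t=f(\eta)$ gives $-\lambda\le f$, while $L(\eta_0,c)<0$ reads $-\lambda(\eta_0)>c$; letting $c\uparrow f(\eta_0)$ yields $\tilde H_F(\eta_0)\ge f(\eta_0)$.

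The remaining possibility $\beta=0$, which can arise only when $f(\eta_0)=+\infty$, is where I expect the real difficulty, since no finite-slope support is then available. In this case $L$ reduces to $\lambda$, with $\lambda\ge 0$ on $\mathrm{dom}\,f$ and $\lambda(\eta_0)<0$. One repairs this in the standard way: choosing any linear form $\ell_0\le f$ (one exists by Hahn--Banach, $f$ being finite somewhere), the forms $\ell_0-s\lambda$ still satisfy $\ell_0-s\lambda\le f$ for every $s\ge 0$, because $\lambda\ge 0$ on $\mathrm{dom}\,f$; as their values $\ell_0(\eta_0)+s\,|\lambda(\eta_0)|$ at $\eta_0$ tend to $+\infty$, we obtain $\tilde H_F(\eta_0)=+\infty=f(\eta_0)$. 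Carrying this $+\infty$ bookkeeping through cleanly---rather than the separation step itself---is the subtle point of the argument.
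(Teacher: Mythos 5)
Your proof is correct, but there is nothing in the paper to compare it against step by step: the paper does not prove Proposition~\ref{prop:hormander} at all, it simply quotes it from H\"ormander \cite[Theorem 2.2.8]{Hor07}. What you have written is essentially the standard argument lying behind that citation: reduce, via the identification of linear forms with points, to the fact that a proper lower semi-continuous sublinear function is the upper envelope of the linear forms it dominates, and prove that fact by strictly separating a point $(\eta_0,c)$ with $c<f(\eta_0)$ from the closed convex cone $\operatorname{epi} f$, treating separately the cases $\beta>0$ and $\beta=0$ for the vertical coefficient of the separating form. The one step genuinely specific to this paper's Lorentzian setting, which you rightly isolate, is the nondegeneracy of $\langle\cdot,\cdot\rangle_-$: it makes $x\mapsto\langle x,\cdot\rangle_-$ a bijection between points of $\R^{d+1}$ and linear forms, so that the classical Euclidean statement and the statement for the Minkowski form are literally the same theorem; this is exactly what the paper's citation silently uses. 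The trade-off is clear: the paper buys brevity by outsourcing to \cite{Hor07}, while your version is self-contained and makes the transfer of bilinear forms explicit. Two small repairs to your write-up, neither fatal: (i) the result needs $f$ to be proper (finite at some point), which you do implicitly use when invoking ``Hahn--Banach, $f$ being finite somewhere''; without it the statement is false, since $f\equiv+\infty$ gives $F=\R^{d+1}$, whose total support function is $0$ at the origin rather than $+\infty$; (ii) positive homogeneity alone does not force $f(0)=0$, only $f(0)\in\{0,+\infty,-\infty\}$; what your separation argument actually needs is that $\operatorname{epi} f$ is stable under multiplication by $\lambda>0$, which is immediate, and $f(0)=0$ then follows from closedness of the nonempty epigraph.
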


From the definition we get:
\begin{lemma}\label{lem:point}
A convex subset of $\R^{d+1}$ is a point if and only if its total support function is a linear form.
(If the point is $p$, the linear form is
$ \langle \cdot, p\rangle_-$.)

In particular, the total support function of a Fuchsian convex body is never a linear form.
\end{lemma}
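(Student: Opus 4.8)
The plan is to establish the equivalence directly from the definition \eqref{eq:ext supp fct}, using only that the bilinear form $\langle\cdot,\cdot\rangle_-$ is nondegenerate, and then to read off the final assertion from the fact that a Fuchsian convex body is never a single point. The forward implication is immediate: if the convex set is a single point $\{p\}$, then for every $\eta$ the supremum in \eqref{eq:ext supp fct} is attained at the only available point, so $\tilde{H}(\eta)=\langle p,\eta\rangle_-$, which is a linear form in $\eta$ (and is exactly $\langle\cdot,p\rangle_-$ by symmetry of the form).

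For the converse, suppose the total support function of a convex set $K$ is a linear form. Since $\langle\cdot,\cdot\rangle_-$ is nondegenerate, every linear form on $\R^{d+1}$ can be written as $\langle\cdot,p\rangle_-$ for a unique $p$, so $\tilde{H}(\eta)=\langle p,\eta\rangle_-$ for all $\eta$. First, $K$ is nonempty: otherwise the supremum in \eqref{eq:ext supp fct} would equal $-\infty$, not a finite linear form. Now fix any $x\in K$. Because each element of the set over which the supremum is taken is bounded above by the supremum, we have $\langle x,\eta\rangle_-\leq\langle p,\eta\rangle_-$, that is $\langle x-p,\eta\rangle_-\leq 0$, for every $\eta\in\R^{d+1}$. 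Applying this to both $\eta$ and $-\eta$ yields $\langle x-p,\eta\rangle_-=0$ for all $\eta$, and nondegeneracy forces $x=p$. Hence $K\subseteq\{p\}$, and combined with nonemptiness, $K=\{p\}$. (Alternatively, one may invoke Proposition~\ref{prop:hormander}: the closed convex set with support function $\langle\cdot,p\rangle_-$ is $\{x:\langle x-p,\eta\rangle_-\leq 0\ \forall\eta\}=\{p\}$, and it coincides with the closure of $K$, since passing to the closure does not change the supremum of the continuous functions $\langle\cdot,\eta\rangle_-$.)

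Finally, to see that the total support function of a Fuchsian convex body $K$ is never linear, it suffices by the equivalence just proved to check that $K$ is not a point. This is clear from Lemma~\ref{lem:conitude}: if $x\in K$ then $\lambda x\in K$ for all $\lambda\geq 1$, so $K$ contains a ray and is in particular unbounded; equivalently, Lemma~\ref{lem: future convex}\ref{nonvide} already forbids $K$ from lying in a positive-codimension plane. The whole argument is elementary; the only point requiring care is the nonemptiness of $K$, needed to exclude the degenerate value $-\infty$, and the single substantive ingredient is the nondegeneracy of the Minkowski bilinear form, which is what upgrades the one-sided inequalities $\langle x-p,\eta\rangle_-\leq 0$ into the equality $x=p$.
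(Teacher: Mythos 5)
Your proof is correct, and it follows the same route the paper intends: the paper states this lemma with no proof beyond the phrase ``From the definition we get,'' treating it as immediate, and your argument is exactly the natural expansion of that -- the forward direction by evaluating the supremum over a singleton, the converse by testing the inequality $\langle x-p,\eta\rangle_-\leq 0$ at $\pm\eta$ and invoking nondegeneracy, and the final assertion via Lemma~\ref{lem:conitude} or Lemma~\ref{lem: future convex}\ref{nonvide}.
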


The relation between the extended support function and the total support function
is as follows.
\begin{lemma}
The total support function $\tilde{H}$ of a $\Gamma$-convex body with extended support function $H$ is equal to:
 \begin{itemize}[nolistsep]
  \item $H$ on $\F$,
\item $0$ on the future light-like vectors and at $0$,
\item $+\infty$ elsewhere.
 \end{itemize}
Moreover $\tilde{H}$ is a $\Gamma$ invariant sublinear function.
\end{lemma}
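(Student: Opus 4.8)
The plan is to partition $\R^{d+1}$ according to the causal type of $\eta$ and to treat each piece separately, the real content lying in just two ``boundary'' phenomena: that $\tilde{H}$ is pinned to exactly $0$ on the future light cone, and that it is $+\infty$ off the closed future cone. First I would dispose of the trivial regions. On $\F$ the defining formulas \eqref{def:sup func} and \eqref{eq:ext supp fct} are the very same supremum, so $\tilde{H}=H$ there, and Lemma~\ref{lem: tout vect normal} already guarantees this value is a finite negative real. At $\eta=0$ one has $\langle x,0\rangle_-=0$ for all $x$, so $\tilde{H}(0)=0$. This settles the interior of the cone and its apex.

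The future light-like case is, I expect, the crux. If $\eta$ is future light-like then for every $x\in K\subset\F$ the vectors $x$ and $\eta$ are future non-space-like and are \emph{not} linearly dependent light-like vectors (one is time-like, the other light-like), so Sublemma~\ref{lem:elementary}\ref{elem1} gives $\langle x,\eta\rangle_-<0$ strictly, whence $\tilde{H}(\eta)\le 0$. The point is to show the supremum is actually $0$, i.e.\ that $K$ carries points on which $\langle\cdot,\eta\rangle_-$ tends to $0^-$; this is exactly where the group dynamics enter. I would fix $x_0\in K$, lying on some $\H^d_t$, so that the whole orbit $\Gamma x_0$ lies in $K\cap\H^d_t$. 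For fixed light-like $\eta$ the sublevel sets $\{\,-\langle x,\eta\rangle_-<c\,\}\cap\H^d_t$, $c>0$, are precisely the interiors of the horoballs centered at the ideal endpoint of $\eta$ (the standard description of horospheres in the hyperboloid model, cf.\ \cite{Rat06}), shrinking to that ideal point as $c\to0^+$. Since $\Gamma$ acts cocompactly on the homothetic copy $\H^d_t$ of $\H^d$, Sublemma~\ref{sublem: horo} asserts that $\Gamma x_0$ meets the interior of each such horoball; thus for every $c>0$ there is $\gamma$ with $-c<\langle\gamma x_0,\eta\rangle_-\le 0$, forcing $\tilde{H}(\eta)=0$.

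It remains to establish $\tilde{H}(\eta)=+\infty$ for $\eta$ outside the closed future cone, where the common tool is Lemma~\ref{lem:conitude}: once I exhibit a single $x\in K$ with $\langle x,\eta\rangle_->0$, the whole ray $\{\lambda x:\lambda\ge 1\}\subset K$ yields $\langle\lambda x,\eta\rangle_-=\lambda\langle x,\eta\rangle_-\to+\infty$. If $\eta$ is past time-like or past nonzero light-like, then $-\eta$ is future non-space-like, and Sublemma~\ref{lem:elementary}\ref{elem1} gives $\langle x,-\eta\rangle_-<0$, i.e.\ $\langle x,\eta\rangle_->0$, for \emph{every} $x\in K$, so any $x$ works. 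If $\eta$ is space-like, I would instead use that $\eta^\perp\cap\H^d_t$ is a totally geodesic hyperplane separating $\H^d_t$ into two half-spaces; the open half-space $\{\langle\cdot,\eta\rangle_->0\}$ contains points arbitrarily far from $\eta^\perp$, while by cocompactness $\Gamma x_0$ is $R$-dense in $\H^d_t$ for some $R$. A point of that half-space at distance $>R$ from $\eta^\perp$ then has an orbit point within $R$ of it, still lying in the half-space, giving $\gamma x_0\in K$ with $\langle\gamma x_0,\eta\rangle_->0$, and scaling concludes.

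Finally, the ``moreover'' is immediate from the definition \eqref{eq:ext supp fct}, exactly as for the extended support function: positive homogeneity $\tilde{H}(\lambda\eta)=\lambda\tilde{H}(\eta)$ for $\lambda>0$ and subadditivity $\tilde{H}(\eta+\mu)\le\tilde{H}(\eta)+\tilde{H}(\mu)$ follow from elementary properties of suprema (with the usual conventions for $+\infty$), and $\Gamma$-invariance holds because each $\gamma$ is a linear isometry with $\gamma^{-1}K=K$, so $\tilde{H}(\gamma\eta)=\sup_{x\in K}\langle\gamma^{-1}x,\eta\rangle_-=\tilde{H}(\eta)$. The genuine obstacle is the sharp value on the light cone: the inequality $\tilde{H}\le 0$ there is routine, but pinning the supremum to exactly $0$ is precisely what requires the Fuchsian dynamics through the horoball property of Sublemma~\ref{sublem: horo}; everything else is bookkeeping with the causal structure and the ray-invariance of $K$.
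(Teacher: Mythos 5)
Your proposal is correct, and it agrees with the paper everywhere except on the crux, where it takes a genuinely different route. For the value $0$ on the future light cone, the paper argues indirectly: it notes that $\tilde{H}$ is lower semi-continuous (a supremum of continuous functions) and equals $+\infty$ off the closed cone, and then invokes Sublemma~\ref{sub: cl}, whose proof pushes a time-like ray onto the light-like ray by a sequence $\gamma_k\in\Gamma$, renormalizes the last coordinate, and uses homogeneity (together with the continuity theorem for closed convex functions cited from Rockafellar) to force the boundary value $0$. You instead compute the supremum directly: in the hyperboloid model the sets $\{x\in\H^d_t \mid -c<\langle x,\eta\rangle_-<0\}$ are horoball interiors centered at the ideal endpoint of $\eta$, so Sublemma~\ref{sublem: horo} (orbits of cocompact groups meet every horoball) produces orbit points of any $x_0\in K$ on which $\langle\cdot,\eta\rangle_-$ tends to $0^-$. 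This is more elementary and self-contained, bypassing the lsc-hull machinery entirely, and it is close in spirit to the paper's own Sublemma~\ref{sub:horo}, which the paper only deploys later for Lemma~\ref{lem: determination supp fct}; the trade-off is that the paper needs Sublemma~\ref{sub: cl} for that later lemma anyway, so it gets the light-like value essentially for free. A second, minor difference: for space-like $\eta$ the paper simply points back to the proof of Lemma~\ref{lem: future convex}, where the fact that an orbit cannot lie entirely on one side of a totally geodesic hyperplane is declared clear, whereas your $R$-density argument actually proves that step --- a small gain in completeness. The remaining cases (on $\F$, at the origin, past vectors via Sublemma~\ref{lem:elementary} and the ray property of Lemma~\ref{lem:conitude}, and the sublinearity and $\Gamma$-invariance) coincide with the paper's treatment.
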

\begin{proof}
We have the following cases
\begin{itemize}[nolistsep]
 \item If $\eta$ is future time-like then $\tilde{H}(\eta)=H(\eta)$.
\item If $\eta$ is past time-like or past light-like, then by \ref{elem1} of Sublemma~\ref{lem:elementary}
for $x\in K$, $\langle x,\eta\rangle_->0$, and by Lemma~\ref{lem:conitude}, $\tilde{H}(\eta)=+\infty$.
\item If $\eta$ is space-like, as seen in the proof of \ref{suppspace} of  Lemma~\ref{lem: future convex},
there exists points of $K$ on both side of the orthogonal (for $\langle \cdot,\cdot\rangle_-$)
of $\eta$. Hence there exists $x\in K$ with $\langle x,\eta\rangle_->0$, and by the preceding argument,
 $\tilde{H}(\eta)=+\infty$.
\item If $\eta$ is future light-like, then $\tilde{H}(\eta)=0$. 
 As $\tilde{H}$ is lower semi-continuous (as supremum of a family of continuous
functions)  and as $\tilde{H}=+\infty$ outside of the future cone, this follows from Sublemma~\ref{sub: cl}.
\item By definition, $\tilde{H}(0)=0$.
\end{itemize}
That $\tilde{H}$ is a $\Gamma$ invariant sublinear function follows easily.
\end{proof}

\begin{sublemma}\label{sub: cl}
 Let $H$ be a sublinear function on $\F$ with finite values.
Let us extend it as a convex function on $\R^{d+1}$ by giving the value $+\infty$ outside $\F$.
Let $\tilde{H}$ be the  lower semi-continuous hull of $H$:
$\tilde{H}(x)=\mathrm{liminf}_{x\rightarrow y}H(y)$.

If $H$ is invariant under the action of $\Gamma$, then $H$ is negative or $H\equiv 0$ on $\F$, and $\tilde{H}=0$ on $\partial \F$.
\end{sublemma}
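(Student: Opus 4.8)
The plan is to reduce everything to the compact quotient $\H^d/\Gamma$ and then exploit homogeneity and subadditivity separately. First I would restrict $H$ to the pseudo-sphere $\H^d=\H^d_1$. Since $H$ is sublinear it is continuous, and since it is $\Gamma$-invariant it descends to a continuous function on the compact manifold $\H^d/\Gamma$; hence there are constants $m\le M$ with $m\le H\le M$ on $\H^d$. Now write any $y\in\F$ as $y=t\,\omega$ with $\omega=y/t\in\H^d$ and $t=\sqrt{-\langle y,y\rangle_-}>0$; homogeneity gives $H(y)=t\,H(\omega)$, which lies in $[t m,t M]$. As $y$ tends to a point $v\in\partial\F$ (a future light-like vector, or the origin) we have $\langle y,y\rangle_-\to 0$, so $t\to 0$ and therefore $H(y)\to 0$ along every sequence in $\F$. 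Since $H\equiv+\infty$ off $\F$, this already yields $\tilde H(v)=0$ for all $v\in\partial\F$, which is the second assertion; I record the stronger fact that $H(y)\to 0$ for every approach $y\to v$ inside $\F$, as it is what the first assertion will need.

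For the sign statement I would first prove $H\le 0$ on $\F$. The key geometric input is that every future time-like $x$ can be written as a sum $x=v_1+v_2$ of two future light-like vectors (choose a time-like $2$-plane through $x$; its two null rays, suitably scaled, do the job, and by Sublemma~\ref{lem:elementary} the sum of two independent future null vectors is again future time-like). These $v_i$ lie on $\partial\F$, so to use subadditivity I perturb them inward by the future unit time-like vector $e_{d+1}=(0,\dots,0,1)$: set $v_i^\epsilon=v_i+\epsilon\,e_{d+1}\in\F$, so that $v_1^\epsilon+v_2^\epsilon=x+2\epsilon\,e_{d+1}\in\F$. Subadditivity gives
\[
H(x+2\epsilon\,e_{d+1})\le H(v_1^\epsilon)+H(v_2^\epsilon).
\]
Letting $\epsilon\to 0^+$, the left-hand side tends to $H(x)$ by continuity, while each $H(v_i^\epsilon)\to 0$ because $v_i^\epsilon\to v_i\in\partial\F$ (the limit established above). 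Hence $H(x)\le 0$.

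It remains to see that if $H$ vanishes somewhere on $\F$ then $H\equiv 0$. Suppose $H(x_0)=0$. For $z$ small enough that $x_0\pm z\in\F$, subadditivity and homogeneity give
\[
H(x_0+z)+H(x_0-z)\ge H(2x_0)=2H(x_0)=0,
\]
while each summand is $\le 0$ by the previous step; hence both vanish. Thus the zero set $\{H=0\}\cap\F$ is open; it is also closed by continuity and nonempty by hypothesis, so by connectedness of $\F$ it is all of $\F$. Therefore $H$ is either everywhere negative or identically $0$, which is the dichotomy claimed.

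The step I expect to require the most care is the bound $H\le 0$: one must check that a generic future time-like vector really decomposes as a sum of two future light-like vectors lying in a single time-like $2$-plane, and that the inward perturbation $v_i^\epsilon$ both lands in $\F$ and allows the boundary limit $H(v_i^\epsilon)\to 0$ to be applied — that is, that passing to the limit $\epsilon\to 0$ on both sides of the subadditivity inequality is legitimate. Everything else (continuity of sublinear functions, cocompactness giving boundedness, and connectedness of $\F$) is soft.
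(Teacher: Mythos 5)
Your proof is correct, and it takes a genuinely different route from the paper's. For the boundary values, the paper argues dynamically: cocompactness supplies a sequence $\gamma_k\in\Gamma$ carrying any future time-like ray toward a prescribed light-like ray; invariance and homogeneity then force the values along a rescaled sequence to tend to $0$, and a theorem of Rockafellar (on radial limits of the lower semi-continuous hull) finishes the identification $\tilde H=0$ on $\partial\F$. You instead use cocompactness only through boundedness of $H$ on the compact quotient $\H^d/\Gamma$, combined with the identity $H(y)=t\,H(y/t)$, $t=\sqrt{-\langle y,y\rangle_-}\rightarrow 0$; this is more elementary, avoids both citations, and yields the stronger quantitative fact $|H(y)|\leq C\sqrt{-\langle y,y\rangle_-}$, i.e.\ an honest limit $0$ along every approach from inside $\F$ --- which, as you correctly flag, is exactly what your later $\epsilon\rightarrow 0^+$ limit requires (a bare liminf would not suffice there). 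For the sign dichotomy, the paper stays with the convex extension $\tilde H$: vanishing on $\partial\F$ plus convexity along segments through boundary points gives $\tilde H\leq 0$ on $\F$, and a zero at an interior point propagates along such segments to all of $\F$. You instead work with $H$ directly: non-positivity via the decomposition $x=v_1+v_2$ into future null vectors (valid --- in the time-like $2$-plane spanned by $x$ and any null direction through it, $x$ is a positive combination of the two future null generators), the inward perturbation $v_i+\epsilon e_{d+1}\in\F$ (valid by Sublemma~\ref{lem:elementary}), and subadditivity; rigidity via subadditivity and homogeneity (openness of the zero set) plus connectedness of $\F$. Both proofs invoke the Fuchsian hypothesis exactly once, to pin down the boundary values; after that, the paper's convexity argument is shorter, while yours is self-contained and never needs to manipulate the extension $\tilde H$ beyond its definition.
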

Note that $H\equiv 0$ is the support function of (the closure of) $\F$. 
\begin{proof}
Let $\ell$ be a future light-like vector.
 As $\Gamma$ acts cocompactly on $\H^d$, there exists a sequence of $\gamma_k\in\Gamma$ such that for
any future time-like ray $r$, the sequence $\gamma_n r$  converges to the
ray containing $\ell$ \cite[Example 2, 12.2]{Rat06}. From this sequence we take a sequence $\gamma_k \eta$ for
a future time-like vector $\eta$. We have $\tilde{H}(\gamma_k \eta)=\tilde{H}(\eta)$. From this sequence we
take a sequence of vectors $\eta'_k$ which all have the same $(d+1)$th coordinate $(\eta'_k)_{d+1}$ as $\ell$, say $\ell_{d+1}$
(hence $\eta'_k\rightarrow \ell$). 
We have
$\eta'_k=\ell_{d+1}/(\gamma_k \eta)_{d+1} \gamma_k \eta$, and by homogeneity 
$\tilde{H}(\eta'_k)=\tilde{H}(\ell_{d+1}\eta)/(\gamma_k \eta)_{d+1} $ that goes to $0$ as $k$ goes to infinity ($(\gamma_k \eta)_{d+1}$
goes to infinity).
This proves t$\tilde{H}=0$ on $\partial \F$ as for any $\ell\in\partial\F^*$ and any $\eta\in\F$, 
$\tilde{H}_K(\ell)=\underset{t\downarrow 0}{\mathrm{lim}} H_K(\ell+t(x-\ell)) $ (see for example Theorem~7.5 in \cite{Roc97}).
In the same way we get that $\tilde{H}(0)=0$.

As $\tilde{H}$ is convex and equal to $0$ on $\partial \F$, it is non-positive on $\F$.
Suppose that there exists $x\in\F$ with $\tilde{H}(x)=0$, and let $y\in\F\setminus\{x\}$. 
By homogeneity,  $\tilde{H}( \lambda x)=0$ for all  $\lambda>0$. Up to choose an appropriate
$\lambda$, we can suppose that the line joining $x$ and
$y$ meets $\partial \F$ in two points. Let $\ell$ be the one such that
there exists $\lambda\in]0,1[$ such that $x=\lambda \ell+(1-\lambda) y$. By convexity and because
$\tilde{H}(x)=\tilde{H}(\ell)=0$, we get $0\leq \tilde{H}(y)$, hence $\tilde{H}(y)=0$.
 \end{proof}

\begin{lemma}\label{lem: determination supp fct}
Let $H$ be a  negative sublinear $\Gamma$-invariant
function on $\F$. The set
$$
  K=\{x\in\mathcal{F}\vert \langle x,\eta\rangle_- \leq H(\eta) \,\forall \eta\in\mathcal{F} \}
$$
is a $\Gamma$-convex body with extended support function $H$.
\end{lemma}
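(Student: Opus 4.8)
The plan is to realize $K$ as the Hörmander set attached to a global extension of $H$, and then to show that this set automatically lies in $\mathcal{F}$. Concretely, I would first extend $H$ to a function $\tilde H$ on all of $\R^{d+1}$ by setting $\tilde H=H$ on $\mathcal{F}$, $\tilde H=0$ on the future light-like rays and at the origin, and $\tilde H=+\infty$ elsewhere. Sublemma~\ref{sub: cl} shows precisely that this $\tilde H$ is the lower semi-continuous hull of the ($+\infty$-outside) convex extension of $H$, so $\tilde H$ is lower semi-continuous, convex and positively homogeneous of degree one; the hypothesis that $H$ is negative places us in the nontrivial branch of that sublemma. Proposition~\ref{prop:hormander} then produces the closed convex set $F=\{x\in\R^{d+1}\mid \langle x,\eta\rangle_-\le\tilde H(\eta)\ \forall \eta\in\R^{d+1}\}$ whose total support function is exactly $\tilde H$; in particular $F$ is nonempty, and it is $\Gamma$-invariant because $\tilde H$ is. It then remains to identify $F$ with $K$ and to read off the extended support function.

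The crucial point, which I expect to be the main obstacle, is to show $F\subset\mathcal{F}$, and this is where cocompactness enters. Restricting $H$ to $\H_1^d=\mathbb{H}^d$ gives a continuous, negative, $\Gamma$-invariant function, which therefore descends to a continuous negative function on the compact manifold $\mathbb{H}^d/\Gamma$; by compactness it is bounded above by some $-\delta<0$. Positive homogeneity then upgrades this to the global estimate $H(\eta)\le -\delta\sqrt{-\langle\eta,\eta\rangle_-}$ for every $\eta\in\mathcal{F}$. For $x\in F$ we thus get $\langle x,\eta\rangle_-\le -\delta\sqrt{-\langle\eta,\eta\rangle_-}<0$ for all future time-like $\eta$. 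Using \ref{elem1} of Sublemma~\ref{lem:elementary}, this sign condition rules out $x$ being space-like, past, or zero, so $x$ is future non-space-like; and if $x$ were future light-like I would reach a contradiction by testing the estimate against $\eta_s=x+sm$ for a fixed future time-like $m$ and $s\downarrow 0$, since the left-hand side is then of order $s$ while the right-hand side is of order $-\sqrt{s}$. Hence $x$ is future time-like, i.e.\ $x\in\mathcal{F}$; plugging $\eta=x$ into the estimate moreover gives $\langle x,x\rangle_-\le-\delta^2$, so that $F\subset B^d_\delta\subset\mathcal{F}$, which also shows $K$ is a proper subset of $\mathcal{F}$.

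Finally I would check $K=F$. Any $x\in F$ satisfies the defining inequalities of $K$ for $\eta\in\mathcal{F}$ (there $\tilde H=H$) and lies in $\mathcal{F}$ by the previous step, so $F\subset K$. Conversely, for $x\in K\subset\mathcal{F}$ the inequality $\langle x,\eta\rangle_-\le\tilde H(\eta)$ is the defining one when $\eta\in\mathcal{F}$, is vacuous when $\tilde H(\eta)=+\infty$, reduces to $0\le 0$ at the origin, and for a future light-like $\eta$ reads $\langle x,\eta\rangle_-\le 0$, which holds by \ref{elem1} of Sublemma~\ref{lem:elementary} since $x$ is future time-like; hence $K\subset F$ and $K=F$. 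Therefore $K$ is a closed convex proper $\Gamma$-invariant subset of $\mathcal{F}$, that is, a $\Gamma$-convex body, and its extended support function is the restriction to $\mathcal{F}$ of the total support function of $F=K$, namely $\tilde H|_{\mathcal{F}}=H$, as desired.
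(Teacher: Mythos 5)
Your proof is correct, and its scaffolding --- extend $H$ to $\tilde H$ as in Sublemma~\ref{sub: cl}, apply Proposition~\ref{prop:hormander}, then identify the resulting closed convex set with $K$ --- is exactly the paper's. Where you genuinely diverge is in the crucial step that the H\"ormander set lies in the open cone $\F$ and not merely in $\overline{\F}$. The paper first gets containment in $\overline{\F}$ by cone duality (from $\tilde H\leq 0$ on $\F$), and then excludes boundary points via Sublemma~\ref{sub:horo}: if some $x\in\partial\F$ belonged to the set, its orbit $\Gamma x$ would accumulate at the origin, forcing $\tilde H(\eta)\geq 0$ for $\eta\in\F$, contradicting negativity of $H$. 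You never invoke Sublemma~\ref{sub:horo}; instead you use compactness of $\H^d/\Gamma$ together with continuity of the (sublinear, hence continuous) function $H$ to get the uniform bound $H\leq-\delta\sqrt{-\langle\cdot,\cdot\rangle_-}$, and then run a direct case analysis, killing the future light-like case by the $s$ versus $\sqrt{s}$ asymptotics along $\eta_s=x+sm$. Both routes use cocompactness, but yours buys a quantitative strengthening --- $K\subset B^d_\delta$, i.e.\ the body stays uniformly inside the cone --- and makes the lemma independent of Sublemma~\ref{sub:horo}, whereas the paper's argument is shorter once that sublemma is available. One small imprecision: ruling out space-like $x$ is not an application of \ref{elem1} of Sublemma~\ref{lem:elementary}, which concerns two non-space-like vectors; you need the elementary fact that the orthogonal complement of a space-like vector meets $\F$, so that $\langle x,\eta\rangle_-$ cannot be strictly negative for all $\eta\in\F$. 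This is the same standard duality fact the paper itself uses silently when asserting $\tilde K\subset\overline{\F}$, so it is a citation slip rather than a gap.
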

\begin{proof} 
Let $\tilde{H}$ be as in Sublemma~\ref{sub: cl}.
From Proposition~\ref{prop:hormander}, the set 
$$
  \tilde{K}=\{x\in\R^{d+1}\vert \langle x,\eta\rangle_- \leq \tilde{H}(\eta) \,\forall \eta\in\R^{d+1} \}
$$
is a closed convex set, with  total support function $\tilde{H}$. Let us see that $\tilde{K}=K$.

As $\tilde{H}(\eta)=+\infty$ outside the closure $\overline{\F}$ of the future cone we have
$$\tilde{K}=\{x\in\R^{d+1}\vert \langle x,\eta\rangle_- \leq \tilde{H}(\eta) \,\forall \eta\in\overline{\F} \}. $$
For $\eta\in\F$, $\tilde{H}(\eta)\leq 0$, it follows that
 $\tilde{K}$
is contained in  $\overline{\F}$:
$$\tilde{K}=\{x\in\overline{\F}\vert \langle x,\eta\rangle_- \leq \tilde{H}(\eta) \,\forall \eta\in\overline{\F} \}. $$
As $H$ is $\Gamma$-invariant, $\tilde{H}$ and $\tilde{K}$ are $\Gamma$-invariant too. 
For $x\in \tilde{K}\cap \partial \F$,
the origin is an accumulating point of $\Gamma x$ from Sublemma~\ref{sub:horo}.
So for any $\eta\in\F$, $\tilde{H}(\eta)$, which is the sup of $\langle x,\eta\rangle_-$ for $x\in \tilde{K}$,
should be zero, that is false. Hence
$$\tilde{K}=\{x\in \F\vert \langle x,\eta\rangle_- \leq \tilde{H}(\eta) \,\forall \eta\in\overline{\F} \}$$
and as $\tilde{H}(\eta)=0$ on $\partial \F$ we get
$$\tilde{K}=\{x\in  \F\vert \langle x,\eta\rangle_- \leq \tilde{H}(\eta) \,\forall \eta\in \F \}=K.$$
The remainder is easy.
\end{proof}
\begin{sublemma}\label{sub:horo}
 Let $\Gamma$ be a Fuchsian group and let $x$ be a future light-like vector. 
Then the origin is an accumulating point of $\Gamma x$.
\end{sublemma}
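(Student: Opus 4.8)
The plan is to reduce the statement to the horoball result already established in Sublemma~\ref{sublem: horo}. Fix the base point $p=(0,\dots,0,1)\in\H^d$, so that $\langle p,p\rangle_-=-1$ and $-\langle p,y\rangle_-=y_{d+1}$ for every $y$. A future light-like vector $y$ satisfies $y_1^2+\cdots+y_d^2=y_{d+1}^2$, hence its Euclidean norm is $\sqrt2\,y_{d+1}$; therefore $y\to 0$ if and only if $y_{d+1}=-\langle p,y\rangle_-\to 0$. Since $\Gamma$ consists of linear isometries preserving $\F$, each $\gamma x$ is again future light-like, and the isometry identity gives $-\langle p,\gamma x\rangle_-=-\langle\gamma^{-1}p,x\rangle_-$. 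So it suffices to produce $\gamma_n\in\Gamma$ with $-\langle\gamma_n^{-1}p,x\rangle_-\to 0$, i.e.\ orbit points $a_n=\gamma_n^{-1}p\in\Gamma p$ on which the pairing $-\langle\cdot,x\rangle_-$ becomes arbitrarily small.

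The second step is geometric: I would identify the level sets $\{a\in\H^d:-\langle a,x\rangle_-=c\}$, $c>0$, with the horospheres centered at the point at infinity $\xi$ determined by $x$ (these are exactly the intersections of $\H^d$ with the affine light-like hyperplanes with normal $x$, as in the proof of Lemma~\ref{lem: future convex}), so that the sublevel sets $\{-\langle\cdot,x\rangle_-\le c\}$ are the horoballs centered at $\xi$, shrinking towards $\xi$ as $c\downarrow 0$. A short check in the hyperboloid model confirms this: along the geodesic ray $\rho(t)=(\sinh t\,\omega,\cosh t)$ from $p$ toward $\xi=\omega$ one computes $-\langle\rho(t),x\rangle_-=e^{-t}\downarrow 0$. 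Now $\Gamma$ acts cocompactly on $\H^d$, so Sublemma~\ref{sublem: horo} applies to the orbit $\Gamma p$: it meets the interior of every horoball. Choosing for each $n$ a point $a_n\in\Gamma p$ in the interior of $\{-\langle\cdot,x\rangle_-\le 1/n\}$ and writing $a_n=\gamma_n^{-1}p$ yields $-\langle\gamma_n^{-1}p,x\rangle_-<1/n$, whence $\gamma_n x\to 0$ by the first step. As $\gamma_n x\ne 0$ for all $n$, the origin is an accumulation point of $\Gamma x$.

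The only point requiring care --- the main (if modest) obstacle --- is the identification of the horoballs centered at $\xi$ with the sublevel sets of $-\langle\cdot,x\rangle_-$, which is what lets me feed these particular horoballs into Sublemma~\ref{sublem: horo}. If one prefers to avoid quoting that identification, an equally short self-contained route is available: by cocompactness the orbit $\Gamma p$ is $R$-dense for some $R$, so one picks $a_n\in\Gamma p$ within distance $R$ of $\rho(n)$; since $a\mapsto\log(-\langle a,x\rangle_-)$ is (up to an additive constant) the Busemann function at $\xi$ and hence $1$-Lipschitz, $-\langle a_n,x\rangle_-\le e^{R}e^{-n}\to 0$, giving the same conclusion. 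Either way the heart of the matter is the translation between the Minkowski pairing $-\langle\cdot,x\rangle_-$ and horospherical geometry; once that is in place the result is immediate.
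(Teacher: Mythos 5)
Your proof is correct, but it follows a genuinely different route from the paper's. The paper argues by contradiction and does not invoke Sublemma~\ref{sublem: horo} at all: it places a fundamental domain below a horizontal space-like hyperplane $S$ and observes that if the origin were not an accumulation point of $\Gamma x$, then for suitable $\lambda>0$ the horoball $\{y\in\H^d \mid -1\le\langle\lambda x,y\rangle_-<0\}$ and all of its $\Gamma$-images would remain above $S$, contradicting the fact that the translates of the fundamental domain cover $\H^d$. You instead give a direct argument: the adjoint identity $\langle p,\gamma x\rangle_-=\langle\gamma^{-1}p,x\rangle_-$ converts the claim about the orbit of the light-like vector $x$ into the claim that the orbit $\Gamma p$ of a point of $\H^d$ penetrates the horoballs $\{a\in\H^d \mid -\langle a,x\rangle_-\le 1/n\}$ centered at the ideal point of $x$, which is exactly what Sublemma~\ref{sublem: horo} provides. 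Both proofs rest on the same dictionary between sublevel sets of $-\langle\cdot,x\rangle_-$ and horoballs --- the paper uses it implicitly when it calls the set above a horoball, so this identification is not an extra burden of your approach. What your route buys: it reuses an already-established sublemma instead of re-running a cocompactness argument, it is direct rather than by contradiction, and it is quantitative (it produces $\gamma_n x$ of Euclidean norm at most $\sqrt2/n$). What the paper's route buys: it is self-contained at this point of the text, needing only the definition of a fundamental domain, whereas yours inherits the dependence of Sublemma~\ref{sublem: horo} on the boundedness of Dirichlet regions and the characterization of such discrete sets cited there.
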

\begin{proof}
 Suppose it is false. As $\Gamma$ acts cocompactly on $\H^d$,
there exists an horizontal space-like hyperplane $S$ such that a fundamental domain on $\H^{d}$ for the action of
$\Gamma$ lies below $S$. If the origin is not an accumulating point, then there exists $\lambda >0$ such that
the horoball 
$$\{y\in\H^d | -1\leq \langle \lambda x, y \rangle_- <0   \} $$
and its images for the action of $\Gamma$ remain above $S$. This contradicts the definition
of fundamental domain.
\end{proof}

The \emph{polar dual} $K^*$ of a $\Gamma$-convex body $K$ is, if $H$ is the extended support function of $K$:
$$K^*=\{x\in \F | H(x) \leq -1\}. $$
For example, $(B_t^d)^*=B^d_{1/t}$.
It is not hard to see that $K^*$ is a $\Gamma$-convex body, and that $K^{**}=K$ 
(see the convex bodies case \cite[1.6.1]{Sch93}). Moreover the points of the boundary of $K^*$ are 
the $\frac{-1}{H(\eta)}\eta$ for $\eta\in\H^d$. The inverse of this map is
the projection $ f(x)=\frac{x}{\sqrt{-\langle x,x\rangle_-}}$.  Hence, exchanging the roles of $K$ and $K^*$, 
we get that the projection of a Fuchsian convex body along rays from the origin gives is a homeomorphism between $\partial K$ and $\H^d$.

\subsection{Minkowski sum and covolume}

The \emph{(Minkowski) addition} of two sets $A,B\subset \R^{d+1}$ is defined as
$$A+B:=\{a+b | a\in A, b\in B\}. $$
It is well-known that the addition of two convex sets is a convex set. Moreover
 the sum of two future time-like vectors is 
a future time-like vector, in particular it is never zero. So the sum of two $\Gamma$-convex bodies 
is contained in $\F$ and closed \cite[3.12]{RW98}.
As a Fuchsian group $\Gamma$ acts by linear isometries, the sum is a $\Gamma$-convex body,
and the space $\mathcal{K}(\Gamma)$ of $\Gamma$-convex bodies is invariant under the addition. 
Note also that $\mathcal{K}(\Gamma)$ is invariant under multiplication by positive scalars.
It is straightforward to check that extended support functions behave well under these operations:
$$H_{K+L}=H_K+H_L, \, K,L\in \mathcal{K}(\Gamma), $$
$$H_{\lambda K}=\lambda H_K, \,\lambda> 0, K\in\mathcal{K}(\Gamma). $$
Note also that from the definition of the extended support function,
$$K\subset L \Leftrightarrow H_K \leq H_L.$$

Identifying $\Gamma$-convex bodies with
their support functions,  $\mathcal{K}(\Gamma)$ is	 a cone in the vector space of homogeneous of degree $1$, continuous,
real, $\Gamma$-invariant, functions on $\mathcal{F}$. By homogeneity this corresponds to a cone 
in the vector space of continuous
real  $\Gamma$-invariant functions on $\mathbb{H}^d\subset \mathcal{F}$, and to  
 a cone in the vector space of continuous
real  functions on the compact hyperbolic manifold $\mathbb{H}^d/\Gamma$. 
A function in one of this two last cones is called a \emph{support function}.

Let $K\in\mathcal{K}(\Gamma)$. Its \emph{covolume} $\mathrm{covol}(K)$ is the volume of $(\mathcal{F}\setminus K)/\Gamma$
(for the Lebesgue measure of $\R^{d+1}$). It is a finite
positive number and
\begin{equation*}\mathrm{covol}(\lambda K)=\lambda^{d+1}\mathrm{covol}(K).\end{equation*}
Note that
$$K\subset L \Rightarrow \mathrm{covol}(K)\geq \mathrm{covol}(L). $$

As defined above, the covolume of a $\Gamma$-convex body $K$ is the volume of a compact set of 
$\R^{d+1}$, namely the volume of  the intersection of $\F\setminus K$
with a fundamental cone for the action of $\Gamma$. For such compact (non-convex) 
sets there is a Brunn--Minkowski theory, see for example \cite{Gar02}. See also \cite{BE99}.
 But this does not give results about covolume
of $\Gamma$-convex bodies. The reason is that, for two $\Gamma$-convex bodies $K_1$ and $K_2$, 
$\F\setminus (K_1+K_2)$ (from which we define the covolume of $K_1+K_2$) is not equal to $(\F\setminus K_1)+(\F \setminus K_2)$.
For example in $d=1$, $\left(0 \atop 1/2 \right) +\left(5/8\atop 9/8 \right)  \in (\F\setminus B + \F\setminus B)$ 
but does not belong to $\F\setminus(B+B)$.

\section{$C^2_+$ case}\label{sec:reg}

The first subsection is an adaptation of the classical case \cite{Sch93}.
The remainder is the analog of \cite{Ale38} (in \cite{Ale96}). See also 
\cite{BF87}, \cite{Lei93}, \cite{Hor07}, \cite{Bus08}, and \cite{GMT10} for a kind of extension.
 
The objects and results in this section which can be defined intrinsically on a hyperbolic manifold 
are already known in more generality, see
\cite{OS83} and the references therein. See also Subsection~\ref{sub:mink reg}.

\subsection{Regularity of the support function}\label{sub: reg sup}

 \paragraph{Differentiability}
Let $K$ be a $\Gamma$-convex body with extended support function $H$, and let $\eta\in \F$.
From Lemma~\ref{lem: tout vect normal} there exists a unique support hyperplane $\mathcal{H}$
of $K$ with inward normal $\eta$.  
\begin{lemma}
The intersection $F$ of $\mathcal{H}$ and $K$ is reduced to a single point $p$ if and only if $H$ is 
differentiable at $\eta\in\F$.
In this case $p=\nabla_{\eta}H$ (the gradient for $\langle \cdot,\cdot\rangle_-$ of $H$ at $\eta$).  
\end{lemma}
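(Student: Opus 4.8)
The plan is to identify the face $F=\mathcal{H}\cap K=\{x\in K:\langle x,\eta\rangle_-=H(\eta)\}$ with the subdifferential of $H$ at $\eta$, taken with respect to the bilinear form $\langle\cdot,\cdot\rangle_-$, and then to invoke the standard fact that a finite convex function on an open convex set is differentiable at an interior point exactly when its subdifferential there is a single point, the gradient being that point (see \cite{Roc97}). Since $\F$ is open and $H$ is finite, convex and sublinear on $\F$, this subdifferential calculus applies directly to $H$ on $\F$, with no need to pass to $\tilde H$. The only twist is that the nondegenerate form $\langle\cdot,\cdot\rangle_-$ replaces the Euclidean inner product; concretely one transports gradients and subgradients through the linear isomorphism $g=\mathrm{diag}(1,\dots,1,-1)$ characterized by $\langle x,y\rangle_-=\langle gx,y\rangle$. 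As this transport is a bijection, it preserves both the cardinality of the subdifferential and the notion of differentiability, and a short computation shows it sends the unique Euclidean subgradient $q$ to the Minkowski gradient $gq$; so the whole statement reduces to the classical one, transcribed into the Minkowski setting.

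The core step is therefore the subdifferential identification. If $p\in F$ then $p\in K$, so $\langle p,\mu\rangle_-\le H(\mu)$ for every $\mu\in\F$ by the definition of $H$, while $\langle p,\eta\rangle_-=H(\eta)$; subtracting the equality from the inequality gives the subgradient relation
\begin{equation*}
H(\mu)\ge H(\eta)+\langle p,\mu-\eta\rangle_-\qquad\text{for all }\mu\in\F.
\end{equation*}
Conversely, suppose $p$ satisfies this relation. Testing it at $\mu=\lambda\eta$ and using homogeneity $H(\lambda\eta)=\lambda H(\eta)$ yields $(\lambda-1)\bigl(H(\eta)-\langle p,\eta\rangle_-\bigr)\ge 0$ for all $\lambda>0$, whence $\langle p,\eta\rangle_-=H(\eta)$, and the relation then reads $\langle p,\mu\rangle_-\le H(\mu)$ for all $\mu\in\F$. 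Because $H<0$ on $\F$ by Lemma~\ref{lem: tout vect normal}, this forces $\langle p,\mu\rangle_-<0$ on the whole open future cone; writing $p=(\overline p,p_{d+1})$ and testing against vectors of $\F$ approaching the cone boundary shows $|\overline p|\le p_{d+1}$, so $p$ is itself future time-like, i.e.\ $p\in\F$. Lemma~\ref{lem: determination supp fct} then gives $p\in K$, and together with $\langle p,\eta\rangle_-=H(\eta)$ this places $p$ in $F$. Hence $F$ is precisely the set of subgradients of $H$ at $\eta$.

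With this identification both directions follow at once. If $F=\{p\}$ the subdifferential is the singleton $\{p\}$, so $H$ is differentiable at $\eta$ with Minkowski gradient $\nabla_\eta H=p$; conversely, differentiability of $H$ at $\eta$ forces the subdifferential, hence $F$, to be a single point, which must equal $\nabla_\eta H$. The step I expect to be the only genuine obstacle is the converse half of the identification above, namely showing that an abstract subgradient $p$ lands back inside $K$: this is exactly where one must combine the negativity of $H$, the self-duality of the future cone under $\langle\cdot,\cdot\rangle_-$ (to locate $p$ in $\F$), and the reconstruction Lemma~\ref{lem: determination supp fct}. Everything else is the standard support-function differentiability argument of \cite{Sch93} carried over through the fixed form $\langle\cdot,\cdot\rangle_-$.
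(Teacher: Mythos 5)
Your route is genuinely different from the paper's: you identify the face $F$ with the subdifferential of $H$ at $\eta$ (taken with respect to $\langle\cdot,\cdot\rangle_-$) and invoke the classical theorem that a finite convex function on an open set is differentiable at a point exactly when its subdifferential there is a singleton, whereas the paper proves that $u\mapsto d_{\eta}H(u)$ is the \emph{total support function} of $F$ (via Proposition~\ref{prop:hormander}) and then concludes with Lemma~\ref{lem:point}. These are dual formulations of the same convex-analysis fact, and most of your steps are sound: the inclusion $F\subseteq\partial H(\eta)$, the normalization $\langle p,\eta\rangle_-=H(\eta)$ obtained by testing $\mu=\lambda\eta$, and the transport of gradients through $g=\mathrm{diag}(1,\dots,1,-1)$ are all correct.

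There is, however, a genuine gap in the converse inclusion $\partial H(\eta)\subseteq F$, exactly at the step you flag as the obstacle. From $\langle p,\mu\rangle_-\leq H(\mu)<0$ for all $\mu\in\F$, self-duality of the future cone gives only $|\overline p|\leq p_{d+1}$, i.e.\ $p\in\overline{\F}$; this does \emph{not} make $p$ future time-like. A future light-like $p$ also satisfies $\langle p,\mu\rangle_-<0$ for every $\mu\in\F$, so strict negativity on the open cone cannot separate time-like from light-like, and no limiting procedure toward the cone boundary will upgrade the non-strict inequality. Since Lemma~\ref{lem: determination supp fct} describes $K$ as a subset of $\F$, you cannot invoke it until $p\in\F$ is actually established, so your argument as written does not close. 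The missing ingredient is cocompactness: $h=H|_{\H^d}$ is continuous, negative and $\Gamma$-invariant, hence bounded above by some $-c<0$ on all of $\H^d$; on the other hand, if $p$ were future light-like one would have $\sup_{\mu\in\H^d}\langle p,\mu\rangle_-=0$ (take $\mu\in\H^d$ going to infinity along the ray of $p$, e.g.\ $\mu_t=(\sinh t\,u,\cosh t)$ against $p=(u,1)$ gives $\langle p,\mu_t\rangle_-=-e^{-t}$), contradicting $\langle p,\mu\rangle_-\leq h(\mu)\leq -c$; the case $p=0$ is excluded directly since $0>H(\mu)$. This supplement parallels the paper's own use of Sublemma~\ref{sub:horo} inside the proof of Lemma~\ref{lem: determination supp fct} to rule out boundary points; with it added, your subdifferential argument becomes a complete and legitimate alternative to the paper's proof.
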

\begin{proof}
 As $H$ is convex all one-sided directional derivatives exist \cite[p.~25]{Sch93}.
Let us denote such derivative in the direction of $u\in\R^{d+1}$ at the point $\eta$ by
$d_{\eta}H(u)$.
The proof of the lemma is based on the following fact:

\emph{The function $\mathbb{R}^{d+1} \ni u\mapsto d_{\eta}H(u)$ is the total support function 
 of $F$.}

Indeed, if $H$ is differentiable at $\eta$, the fact says that the total support function of $F$ is  a linear form, and
from Lemma~\ref{lem:point}, $F$ is a point. Conversely, if $F$ is a point $p$, from  Lemma~\ref{lem:point}
its total support function is a linear form, hence partial derivatives of $H$  exist and as $H$
is convex, this implies differentiability \cite[1.5.6]{Sch93}. Moreover for all $u\in \R^{d+1}$, 
$\langle p,u \rangle_-=d_{\eta} H(u)$.

Now we prove the fact. The function $d_{\eta}H$ is sublinear on
$\R^{d+1}$  \cite[1.5.4]{Sch93}, Proposition~\ref{prop:hormander} applies and
$d_{\eta}H$ is the total support function of
$$F'=\{x\in\R^{d+1}\vert \langle x,u\rangle_- \leq d_{\eta}H(u) \,\forall u\in\R^{d+1} \}.$$
We have to prove that $F'=F$.
Let $\tilde{H}$ be the extension of $H$ to $\R^{d+1}$. By definition of directional derivative, 
the sublinearity of $\tilde{H}$ gives
 $d_{\eta}H\leq \tilde{H}$. From the proof of Lemma~\ref{lem: determination supp fct}, this implies that
 $F'\subset K$.
In particular, for $y\in F'$, $\langle y,\eta \rangle_-\leq H(\eta)$. On the other hand
$y\in F'$ implies $\langle y,-\eta \rangle_-\leq d_{\eta}H(-\eta)=-H(\eta)$ (the last equality 
follows from the definition of directional derivative, using the homogeneity of $H$).
Then $\langle y,\eta \rangle_-=H(\eta)$ so $y\in\mathcal{H}$, hence $F'\subset F=\mathcal{H}\cap K$.

Let $y \in F$. By definition $\langle y, \eta\rangle_- = H(\eta)$ and for any $w\in \F$,
$\langle y,w \rangle_- \leq H(w)$. For sufficiently small positive $\lambda$ and any
$u\in\R^{d+1}$, $w=\eta+\lambda u$ is future time-like  and
$$\langle y,u\rangle_- \leq \frac{H(\eta+\lambda u)-H(\eta)}{\lambda}$$
so when $\lambda\rightarrow 0$ we have $\langle y,u\rangle_-\leq d_{\eta}H(u)$ hence $F\subset F'$. 
The fact is proved.
\end{proof}

If the extended support function $H$ of
a  $\Gamma$-convex body $K$ is differentiable, the above lemma allows to define 
the map
$$\tilde{G}(\eta)=\nabla_{\eta}H$$ 
from $\F$ to $\partial K\subset\mathbb{R}^{d+1}$. This can be expressed in term of $h$, the restriction
of $H$ to $\mathbb{H}^d$. We use ``hyperbolic coordinates'' on $\F$:
an orthonormal frame on
$\H^d$ extended to  an orthonormal frame of $\F$ with the decomposition
$r^2g_{\H^d}-\d r^2$ of the metric on $\F$. $\nabla_{\eta}H$ has $d+1$ entries, and, at $\eta\in\H^d$,
the $d$ first ones are the coordinates of $\nabla_{\eta} h$ (here $\nabla$ is the 
gradient on $\H^d$). We identify $\nabla_{\eta} h\in T_{\eta}\H^d\subset \mathbb{R}^{d+1}$
with a vector of $\mathbb{R}^{d+1}$. The last component of $\nabla_{\eta}H$ is $-\partial H /\partial r(\eta)$, and, using
the homogeneity of $H$, it is equal to $-h(\eta)$ when $\eta\in \mathbb{H}^d$.
Note that at such a point, $T_{\eta}\F$ is the direct sum of $T_{\eta}\mathbb{H}^d$ and
$\eta$. It follows that, for $\eta\in\mathbb{H}^d$,
\begin{equation}\label{eq:nablanabla}\nabla_{\eta}H=\nabla_{\eta}h-h(\eta)\eta. \end{equation}
This has a clear geometric interpretation, see Figure~\ref{fig:nabla}.

\begin{figure}
\centering
\input 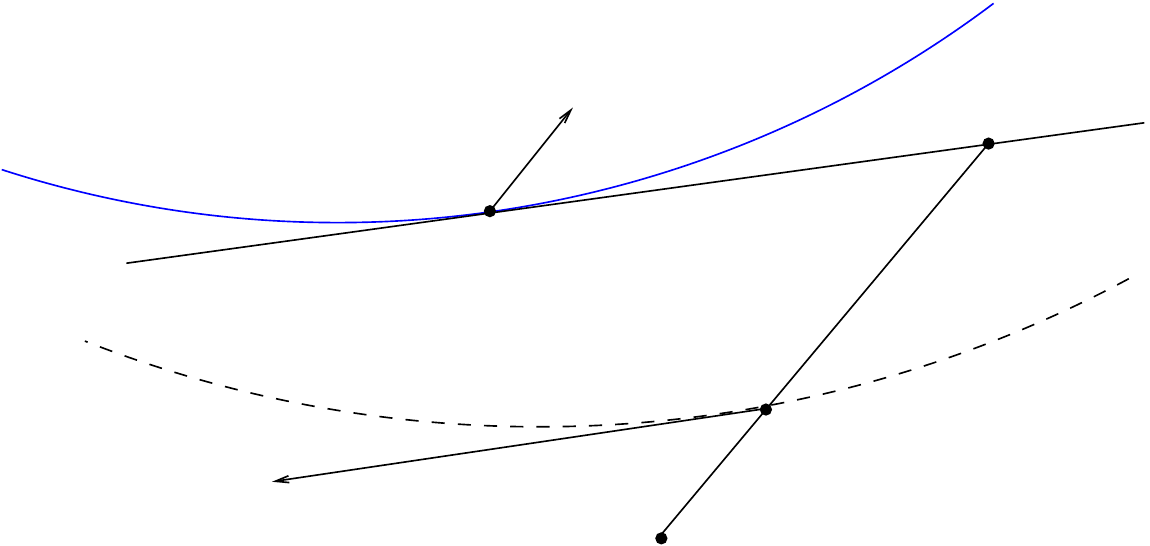_t  
\caption{Recovering the convex body from its support function in the Minkowski space. 
\label{fig:nabla}} 
  \end{figure}

\paragraph{$C^2$ support function} If the extended support function $H$ is $C^2$, $\tilde{G}$ is $C^1$, and its differential $\tilde{W}$
satisfies $$\langle \tilde{W}_{\eta}(X),Y\rangle_-=D^2_{\eta} H(X,Y).$$

We denote by $G$ the restriction  of $\tilde{G}$ to $\H^d$ and by $W$ its differential (the \emph{reversed shape operator}).
If $T_{\nu}$ is the hyperplane of $\R^{d+1}$ orthogonal to $\nu\in\H^d$ for $\langle \cdot,\cdot,\rangle_-$, $W$ is considered as a map from
$T_{\nu}$ to $T_{\nu}$.
We get from \eqref{eq:nablanabla}, or from the equation above, the Gauss formula and the $1$-homogeneity of $H$,
 using again hyperbolic coordinates on $\F$:
\begin{equation}\label{eq: hess h} W_{ij} = (\nabla^2 h)_{ij}- h \delta_{ij},\end{equation}
with  $\nabla^2$ the second covariant derivative (the  Hessian) on $\H^d$, $\delta_{ij}$  the Kronecker symbol and $h$ the restriction of $H$ to $\H^d$.
In particular $W$ is symmetric, and its real eigenvalues $r_1,\ldots,r_d$  are the \emph{radii of curvature} of $K$.
Taking the trace on both parts of the equation above leads to
\begin{equation}\label{eq:laplacian}
 r_1+\cdots+r_d=\Delta_{\mathbb{H}^d}h-dh
\end{equation}
where $\Delta_{\mathbb{H}^d}$ is the Laplacian on the hyperbolic space. 
It is easy to check that, for $\gamma\in\Gamma$, $\nabla_{\gamma \eta}H=\gamma \nabla_{\eta}H$ and $D^2_{\gamma\eta} H=D^2_{\eta} H$.
In particular the objects introduced above can be  defined on $\H^d/\Gamma$.

\paragraph{$C^2_+$ body} 
Let $K$ be a $\Gamma$-convex body.
 The \emph{Gauss map} $N$ is a multivalued map which associates 
to each $x$ in the boundary of $K$ the set of unit inward normals of $K$ at $x$, which are considered as elements of 
$\mathbb{H}^d$.
If the boundary of $K$ is a $C^2$ hypersurface  and if the Gauss map  is a $C^1$-homeomorphism
from the boundary of $K$ to $\mathbb{H}^d$, $K$ is \emph{$C^2_+$}.
In this case   we can define the \emph{shape operator} $B=\nabla N$, which is a self-adjoint operator. 
Its eigenvalues are the  \emph{principal curvatures} $\kappa_i$ of $K$, and they are never zero
as $B$ has maximal rank by assumption. As $K$ is convex, it is well-known that its principal curvatures are non-negative,
hence they are positive. (This implies that $K$ is actually strictly convex.)
\begin{lemma}\label{lem: supp of regular}
Under the identification of a  $\Gamma$-convex body with its support function, 
the set 
of  $C^2_+$ $\Gamma$-convex body is $C^2_+(\Gamma)$, the set of negative $C^2$ functions $h$ on $M=\mathbb{H}^d/\Gamma$
 such that
\begin{equation}\label{eq:hess supp}
((\nabla^2 h)_{ij}- h \delta_{ij} )>0
\end{equation}
(positive definite) for any orthonormal frame on $M$.
\end{lemma}
It follows that in the $C^2_+$ case 
$G=N^{-1}, W=B^{-1}, \mbox{ and } \displaystyle r_i=\frac{1}{\kappa_i\circ N^{-1}}.$ 
\begin{proof}
Let $K$ be a $C^2_+$ $\Gamma$-convex body,  $h$ its support function and $H$ its extended support function
($h$ is the restriction of $H$ to $\H^d$).
 For any 
$\eta\in\mathbb{H}^d$ we have
\begin{equation}\label{eq: supp et GM}
 h(\eta)=\langle N^{-1}(\eta),\eta\rangle_-,
\end{equation}
and for $\eta\in\F$, introducing the $0$-homogeneous extension $\tilde{N}^{-1}$ of $N^{-1}$ 
we obtain
$$D_{\eta}H(X)=\langle \tilde{N}^{-1}(\eta),X\rangle_-+\langle D_{\eta}\tilde{N}^{-1}(X),\eta\rangle_-, $$
but $D_{\eta}\tilde{N}^{-1}(X)$ belongs to the support hyperplane of $K$ with inward normal $\eta$ so
$D_{\eta}H(X)=\langle \tilde{N}^{-1}(\eta),X\rangle_-. $ 
Hence
$
D^2_{\eta}H(X,Y)=\langle B^{-1}(X),Y\rangle_-, $
in particular 
$H$ is $C^2$, so $h$ is $C^2$ and \eqref{eq:hess supp} is known. 
As $h$ is $\Gamma$-invariant, we get a function of  $C^2_+(\Gamma)$.

Now let $h\in C^2_+(\Gamma)$. We also denote by $h$ the $\Gamma$-invariant map on $\H^d$ which 
projects on $h$, and by $H$
the $1$-homogeneous extension of $h$ to $\F$. The $1$-homogeneity and \eqref{eq:hess supp} imply that
$H$ is convex (in the hyperbolic coordinates, row and column of the Hessian of $H$ corresponding
to the radial direction $r$ are zero), hence  negative sublinear $\Gamma$-invariant, so it is the support function of 
a $\Gamma$-convex body $K$ by Lemma~\ref{lem: determination supp fct}.  
As $h$ is   
$C^2$, we get a  map $G$ from $\H^d$ to $\partial K\subset \R^{d+1}$ which is $C^1$, and regular
from \eqref{eq: hess h} and \eqref{eq:hess supp}. Moreover  $G$ is  surjective by Lemma~\ref{lem: tout vect normal}.
It follows that  $\partial K$ is 
$C^1$. This implies that each point of $\partial K$ has a unique support plane \cite[p.~104]{Sch93}, 
i.e~that the map $G$ is injective. Finally 
it is a $C^1$ homeomorphism.

Let $K^*$ be the polar dual of $K$. We know that the points on the boundary of $K^*$ are graphs above $\H^d$
as they have
the form $\eta/(-h(\eta))$ for $\eta\in\H^d$. Hence $\partial K^*$ is $C^2$ as $h$ is. Moreover 
 the Gauss map image of the point $\eta/(-h(\eta))$ of $\partial K^*$  is
$G(\eta)/\sqrt{-\langle G(\eta),G(\eta)\rangle_-}$:  the Gauss map of $K^*$ is a $C^1$
homeomorphism. It follows that $ K^*$ is   $C^2_+$. In particular its support function is
$C^2$. Repeating the argument, it follows that the boundary of $K^{**}=K$ is $C^2$.
\end{proof}

To simplify the matter in the following, we will restrict ourselves to smooth ($C^{\infty}$) support functions, 
although this restriction will be relevant only in Subsection~\ref{sub:mixed reg}.
We denote by  $C^{\infty}_+(\Gamma)$ the subset of smooth elements of  $C^2_+(\Gamma)$.
It corresponds to $C^{\infty}_+$ $\Gamma$-convex bodies, i.e.~$\Gamma$-convex bodies
with smooth boundary and  with the Gauss map a $C^1$ diffeomorphism (hence smooth).

\begin{lemma}\label{lem: supp reg cone}  $C^{\infty}_+(\Gamma)$   is a convex cone
and $$ C^{\infty}_+(\Gamma) -  C^{\infty}_+(\Gamma)=C^{\infty}(\Gamma)$$
(any smooth function on $\H^d/\Gamma$ is the difference of two functions of $C^{\infty}_+(\Gamma)$).
\end{lemma}
\begin{proof}
It is clear that $ C^{\infty}_+(\Gamma)$ is a convex cone. Let $h_1\in  C^{\infty}_+(\Gamma)$
and $Z\in C^{\infty}(\Gamma)$. As $\H^d/\Gamma$ is compact, for $t$ sufficiently large, 
$Z+th_1$ satisfies \eqref{eq:hess supp} and is a negative function, hence there exists
$h_2\in C^{\infty}_+(\Gamma)$ such that $Z+th_1=h_2$.
\end{proof}

\subsection{Covolume and Gaussian curvature operator}

Let $K$ be a $C^2_+$ $\Gamma$-convex body and let
$P(K)$ be $\F$ minus the interior of $K$. 
As $P(K)/\Gamma$ is compact, the divergence theorem gives
$$ \int_{P(K)/\Gamma} \mbox{div}X \d P(K)= -\int_{\partial K/\Gamma} \langle X,\eta\rangle_- \d \partial K,$$
where $\eta$ is the unit outward  normal of $\partial K/\Gamma$ in $P(K)/\Gamma$ (hence
it corresponds in the universal cover to the unit inward normal of $K$).
If $X$ is the position vector in $\F$ we get 
$$
(d+1)\mathrm{covol}(K)=-\int_{\partial K/\Gamma}  h\circ N \d \partial K
$$
with $h$  the support function of $K$  and $N$ the Gauss map.

The 
\emph{Gaussian curvature} (or Gauss--Kronecker curvature) $\kappa$ of $K$ is the product of the principal curvatures.
We will consider the map $\kappa^{-1}$ which associates to each $h\in C^{\infty}_+(\Gamma)$ the inverse of the Gaussian curvature
of the convex body supported by $h$: 
\begin{equation}\label{eq: def kappa}
\kappa^{-1}(h)=\prod_{i=1}^dr_i(h)\stackrel{\eqref{eq: hess h}}{=}\det \left((\nabla^2 h)_{ij}- h \delta_{ij}\right).
\end{equation}

As the curvature is the Jacobian of the Gauss map, we get
$$(d+1)\mathrm{covol}(K)=-\int_{M} h \kappa^{-1}(h) \d M$$
where $\d M$ is the volume form on $M=\H^d/\Gamma$.
Finally let us consider the covolume as a functional on $C^{\infty}_+(\Gamma)$, which 
extension to the whole $C^{\infty}(\Gamma)$ is immediate:
\begin{equation}\label{eq: vol reg}\mathrm{covol}(X)=-\frac{1}{d+1}\lgroup X,\kappa^{-1}(X) \rgroup,  X\in C^{\infty}(\Gamma)\end{equation}
with $\lgroup\cdot,\cdot \rgroup$ the scalar product on $L^2(M)$.

We will consider  $C^{\infty}(\Gamma)$ as a Fr\'echet space with the usual seminorms
$$\|f\|_n=\sum_{i=1}^n \sup_{x\in M} |\nabla^i f(x) |, $$
with $\nabla^i$ the $i$-th covariant derivative and $|\cdot|$ the norm, both given by the Riemannian metric of $M$.
All derivatives will be  directional (or G\^ateaux) derivatives in Fr\'echet spaces  as in \cite{Ham82}:
\begin{equation}\label{eq: der dir}
 D_{Y}\mathrm{covol}(X)=\lim_{t\rightarrow 0}\frac{\mathrm{covol}(Y+tX)-\mathrm{covol}(Y)}{t}, X,Y\in C^{\infty}(\Gamma).
\end{equation}

\begin{lemma} 
The function $\mathrm{covol}$ is $C^{\infty}$  on  $C^{\infty}(\Gamma)$, and for $h\in C^{\infty}_+(\Gamma), X,Y\in C^{\infty}(\Gamma)$,
we have:
\begin{eqnarray}
\ D_{h}\mathrm{covol}(X)=-\lgroup X, \kappa^{-1}(h) \rgroup \label{eq: der vol reg}, \\ 
\ D_{h}^2 \mathrm{covol} (X,Y)=-\lgroup X,D_{h}\kappa^{-1}(Y)\rgroup. \label{eq: der sec vol reg}
\end{eqnarray}
Moreover \eqref{eq: der vol reg} is equivalent to
\begin{equation}\label{eq: k self adj}
 \lgroup X, D_h\kappa^{-1} (Y)\rgroup= \lgroup Y, D_h\kappa^{-1} (X)\rgroup.
\end{equation}
\end{lemma}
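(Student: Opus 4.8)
The plan is to regard $\mathrm{covol}$, extended to all of $C^{\infty}(\Gamma)$ by the algebraic formula \eqref{eq: vol reg}, as a polynomial functional and to read off both derivative formulas from a single integration-by-parts identity for the operator $\kappa^{-1}$. \emph{Smoothness and the raw first derivative.} Since $W(X)_{ij}:=(\nabla^2 X)_{ij}-X\delta_{ij}$ depends linearly on $X$, the quantity $\kappa^{-1}(X)=\det W(X)$ from \eqref{eq: def kappa} is a homogeneous polynomial of degree $d$ in $X$ (the diagonal value of a continuous symmetric $d$-linear form), so $\mathrm{covol}(X)=-\tfrac{1}{d+1}\lgroup X,\kappa^{-1}(X)\rgroup$ is a continuous homogeneous polynomial of degree $d+1$ on the Fr\'echet space $C^{\infty}(\Gamma)$. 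Such functionals are $C^{\infty}$ in the sense of \eqref{eq: der dir} (cf.~\cite{Ham82}), and their derivatives are obtained by differentiating under the integral sign, which is legitimate because $M$ is compact and the integrands are smooth. Carrying this out with the product rule on $X\mapsto X\,\kappa^{-1}(X)$ gives, for $h\in C^{\infty}_+(\Gamma)$,
$$D_{h}\mathrm{covol}(X)=-\frac{1}{d+1}\Big(\lgroup X,\kappa^{-1}(h)\rgroup+\lgroup h,D_{h}\kappa^{-1}(X)\rgroup\Big).$$

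\emph{The self-adjointness identity (the main obstacle).} Writing $C_{ij}$ for the entries of the cofactor matrix $\mathrm{cof}\,W(h)$ of the symmetric matrix $W(h)$, differentiating the determinant in \eqref{eq: def kappa} yields $D_{h}\kappa^{-1}(Y)=\sum_{ij}C_{ij}\big((\nabla^2 Y)_{ij}-Y\delta_{ij}\big)$. The decisive point, which I expect to be the hardest step, is that the cofactor tensor is divergence-free, $\sum_i\nabla_i C_{ij}=0$. This rests on the Codazzi-type identity $\nabla_k W_{ij}=\nabla_i W_{kj}$: commuting covariant derivatives in $\nabla_k\nabla_i\nabla_j h$ on the constant-curvature space $\H^d$ produces the term $\delta_{ij}\nabla_k h-\delta_{kj}\nabla_i h$, which exactly cancels the contribution of the $-h\delta_{ij}$ summand in \eqref{eq: hess h}. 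Writing $C_{ij}$ as a generalized Kronecker delta contracted against the entries of $W(h)$ and using that $\nabla_k W_{ij}$ is then totally symmetric, the antisymmetrization kills $\sum_i\nabla_i C_{ij}$. Granting this, I pair $D_{h}\kappa^{-1}(Y)$ with $X$, integrate by parts once over the closed manifold $M$ to shift one derivative from $Y$ onto $X$, and discard the term in which the derivative lands on $C$ by the divergence-free property, obtaining
$$\lgroup X,D_{h}\kappa^{-1}(Y)\rgroup=-\int_{M}\sum_{ij}(\nabla_i X)\,C_{ij}\,(\nabla_j Y)\,\d M-\int_{M}XY\sum_i C_{ii}\,\d M.$$
Both terms are symmetric in $X$ and $Y$ because $C_{ij}=C_{ji}$, which is exactly \eqref{eq: k self adj}.

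\emph{Collapsing to the stated formulas.} Because $W(\lambda h)=\lambda W(h)$, the map $\kappa^{-1}$ is homogeneous of degree $d$, so Euler's relation gives $D_{h}\kappa^{-1}(h)=d\,\kappa^{-1}(h)$. Combined with \eqref{eq: k self adj} taken at $Y=h$, this yields $\lgroup h,D_{h}\kappa^{-1}(X)\rgroup=\lgroup X,D_{h}\kappa^{-1}(h)\rgroup=d\,\lgroup X,\kappa^{-1}(h)\rgroup$; substituting into the raw first derivative collapses its two terms into $D_{h}\mathrm{covol}(X)=-\lgroup X,\kappa^{-1}(h)\rgroup$, proving \eqref{eq: der vol reg}. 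Differentiating this identity once more in the base point along $Y$ (again under the integral, as only $\kappa^{-1}(h)$ depends on $h$) gives $D_{h}^2\mathrm{covol}(X,Y)=-\lgroup X,D_{h}\kappa^{-1}(Y)\rgroup$, i.e.~\eqref{eq: der sec vol reg}. Finally, the asserted equivalence holds both ways: \eqref{eq: k self adj} feeds the argument just given to produce \eqref{eq: der vol reg}, while conversely \eqref{eq: der vol reg} differentiates to \eqref{eq: der sec vol reg}, and the symmetry of the second derivative of the smooth functional $\mathrm{covol}$ forces $\lgroup X,D_{h}\kappa^{-1}(Y)\rgroup=\lgroup Y,D_{h}\kappa^{-1}(X)\rgroup$.
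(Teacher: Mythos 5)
Your proof is correct, but it takes a genuinely different route from the paper's at the decisive step. The paper does \emph{not} prove the self-adjointness identity \eqref{eq: k self adj} directly: it cites \cite{CY76} (and \cite{OS83}) for that, and instead proves \eqref{eq: der vol reg} geometrically, following \cite{Hor07} --- it first reduces to $X=h'\in C^{\infty}_+(\Gamma)$ by linearity of $D_h\mathrm{covol}$ and Lemma~\ref{lem: supp reg cone}, then parametrizes the collar between $\partial K$ and $\partial(K+\epsilon K')$ by $F(y,t)=f(y)+t(N')^{-1}(N(f(y)))$, computes the Jacobian, and identifies the first variation of the covolume as $\int_{\partial K/\Gamma} h'(N)\,\d\partial K$; the equivalence with \eqref{eq: k self adj} and the formula \eqref{eq: der sec vol reg} are then handled exactly as in your last step. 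You instead establish \eqref{eq: k self adj} analytically: the tensor $W(h)_{ij}=(\nabla^2 h)_{ij}-h\delta_{ij}$ is Codazzi on $\H^d$ (your sign bookkeeping is right: the curvature $-1$ commutator term $\delta_{ij}\nabla_k h-\delta_{kj}\nabla_i h$ cancels the derivative of $-h\delta_{ij}$), hence its cofactor matrix is divergence-free, and one integration by parts over the closed manifold $M$ makes $\lgroup X,D_h\kappa^{-1}(Y)\rgroup$ manifestly symmetric; \eqref{eq: der vol reg} then follows by Euler's relation, as in the paper's own implication \eqref{eq: k self adj}$\,\Rightarrow\,$\eqref{eq: der vol reg}. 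This is essentially the Cheng--Yau argument transposed from the sphere to the hyperbolic setting, so you have in effect reproved the cited result rather than bypassed it. What each approach buys: yours is self-contained, works for arbitrary $X,Y\in C^{\infty}(\Gamma)$ without the cone-spanning reduction, and exposes the mechanism (divergence-free Newton/cofactor tensor) that also underlies the ellipticity and mixed-curvature computations later in the paper; the paper's route avoids all tensor calculus and curvature conventions, and carries the geometric meaning that $D_h\mathrm{covol}$ is integration of the support function against the surface-area measure of $K$.
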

\begin{proof}

The second order differential operator  $\kappa^{-1}$ is smooth as the determinant is smooth \cite[3.6.6]{Ham82}.
Differentiating \eqref{eq: vol reg} we get
\begin{equation}\label{eq: der vol reg rough}
 D_{h}\mathrm{covol}(X)=-\frac{1}{d+1}\left(\lgroup X,\kappa^{-1}(h) \rgroup+\lgroup h,D_h\kappa^{-1}(X) \rgroup\right),
\end{equation}
but the bilinear form $\lgroup \cdot,\cdot\rgroup$ is continuous for the seminorms $\|\cdot \|_n$
(recall that it suffices to check continuity in each variable \cite[2.17]{Rud91}). It follows that $\mathrm{covol}$ is $C^1$,
and by iteration that it is $C^{\infty}$.

If \eqref{eq: der vol reg} is true we get \eqref{eq: der sec vol reg}, and this expression is symmetric as
$\mathrm{covol}$ is $C^2$, so \eqref{eq: k self adj} holds.  

Let us suppose that \eqref{eq: k self adj} is true. From  \eqref{eq: def kappa},
 $\kappa^{-1}$ is homogeneous of degree $d$, 
that gives $D_h\kappa^{-1} (h)=d\kappa^{-1}(h)$. Using this
in \eqref{eq: k self adj} with $Y=h$ gives
$$d\lgroup X, \kappa^{-1} (h)\rgroup= \lgroup h, D_h\kappa^{-1} (X)\rgroup.$$
Inserting this
equation in \eqref{eq: der vol reg rough} leads to \eqref{eq: der vol reg}.

A proof of \eqref{eq: k self adj} is done in \cite{CY76} (for the case of $C^2$ functions on the sphere).
See also \cite{OS83} and reference therein for more generality.
We will prove \eqref{eq: der vol reg} following \cite{Hor07}.
From the definition of $\kappa^{-1}$, the map $D_h\kappa^{-1}(\cdot)$ is
linear, hence from \eqref{eq: der vol reg rough} $D_h\mathrm{covol}(\cdot)$ is also linear, so by Lemma~\ref{lem: supp reg cone}
it suffices to prove \eqref{eq: der vol reg} for $X=h'\in C^{\infty}_+(\Gamma)$.
 We denote by 
$K$ (resp. $K'$) the $\Gamma$-convex body supported by $h$ (resp. $h'$) and by
$N$ (resp. $N'$) its Gauss map. We have, for $\eta\in \F, \epsilon>0,$
$$ h(\eta)+\epsilon h'(\eta)=\langle \eta, N^{-1}(\eta)+\epsilon (N')^{-1}(\eta)\rangle_-$$
i.e~$h+\epsilon h'$ supports the hypersurface with position vector $N^{-1}(\eta)+\epsilon (N')^{-1}(\eta)$.

For a compact $U\subset \R^d$, if $f : U\rightarrow\R^{d+1} $ is a local parametrization of $\partial K$,
let us introduce
$$F : U\times [0,\epsilon]\rightarrow \R^{d+1},
(y,t)\mapsto f(y)+t (N')^{-1}(N(f(y))).$$
It is a local parametrization of the set between the boundary of $K$ and
the boundary of $K+ \epsilon K'$. 
Locally, its covolume (which corresponds to $\mathrm{covol}(h+\epsilon h')-\mathrm{covol}(h)$) is computed as
\begin{equation}
\label{eq:loc vol}\int_{F(U\times [0,\epsilon])} \d \vol=\int_0^{\epsilon }\int_U | \mbox{Jac} F |\d y \d t.
\end{equation}
The Jacobian of $F$ is equal to 
$\left((N')^{-1}(N(f(y))), \frac{\partial f}{\partial y_1},\ldots, \frac{\partial f}{\partial y_d}\right)
+ t R$ where $R$ is a remaining term, and its determinant is equal to the determinant of 
$\left((N')^{-1}(N(f(y))), \frac{\partial f}{\partial y_1},\ldots, \frac{\partial f}{\partial y_d}\right)$
plus $t$ times remaining terms. As $(\frac{\partial f}{\partial y_1},\ldots, \frac{\partial f}{\partial y_d})$ 
form a basis of the tangent hyperplane of $\partial K$, and as $N$ is normal to this hyperplane,
the determinant is equal
to  $\langle  (N')^{-1}(N(f(y))), N(f(y))\rangle_-=h'(N(f(y)))$ times $|\mbox{Jac} f|$, plus $t$ times remaining terms.
The limit of \eqref{eq:loc vol} divided by $\epsilon$ when $\epsilon \rightarrow 0$ gives
$$\int_U h'(N(f(y)) |\mbox{Jac} f | \d y=\int_{f(U)} h'(N) \d\partial K.$$ 
The result follows by decomposing the boundary of $K$ with suitable coordinate patches.
\end{proof}

The main result of this section is the following.
\begin{theorem}\label{thm: vol reg conv}
The second derivative of $\mathrm{covol}: C^{\infty}(\Gamma) \rightarrow \R$ is positive definite. In particular
the covolume of $C^{\infty}_+$ $\Gamma$-convex bodies 
is strictly convex.
\end{theorem}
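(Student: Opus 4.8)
The plan is to start from the identity \eqref{eq: der sec vol reg}, so that proving positive definiteness of $D^2_h\mathrm{covol}$ at a point $h\in C^\infty_+(\Gamma)$ amounts to showing $\lgroup X, D_h\kappa^{-1}(X)\rgroup < 0$ for every $X\in C^\infty(\Gamma)$ with $X\not\equiv 0$. First I would linearize the determinant in \eqref{eq: def kappa}. Writing $W_{ij} = (\nabla^2 h)_{ij} - h\delta_{ij}$ and letting $C^{ij}$ denote the cofactor (adjugate) matrix of $W$, the linear dependence of $W$ on $h$ gives
\begin{equation*}
D_h\kappa^{-1}(Y) = C^{ij}\big((\nabla^2 Y)_{ij} - Y\delta_{ij}\big) = C^{ij}(\nabla^2 Y)_{ij} - (\operatorname{tr} C)\,Y ,
\end{equation*}
with $\operatorname{tr} C = C^{ij}\delta_{ij}$. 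Thus $D_h\kappa^{-1}$ is a second-order linear operator on $M$ whose principal part is $C^{ij}\nabla_i\nabla_j$ and whose zeroth-order coefficient is $-\operatorname{tr} C$.

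The crucial point is that the matrix field $C$ is divergence-free, $\nabla_i C^{ij}=0$. This is the geometric input of the constant curvature of $\H^d$: the tensor $W=\nabla^2 h - h\,\mathrm{Id}$ satisfies the Codazzi equation $\nabla_i W_{jk}=\nabla_j W_{ik}$, the antisymmetrized third derivatives of $h$ being compensated by the $-h\,\mathrm{Id}$ term precisely because the sectional curvature equals $-1$, and the cofactor of a Codazzi tensor is divergence-free. Equivalently, and this is the route I would use for rigor, the divergence-free property is exactly what underlies the self-adjointness \eqref{eq: k self adj} already established: a principal part $C^{ij}\nabla_i\nabla_j$ is $L^2$-self-adjoint on the closed manifold $M$ if and only if $\nabla_i C^{ij}=0$. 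Using this, integrating by parts twice in $\lgroup X, D_h\kappa^{-1}(X)\rgroup$ makes the divergence terms drop out and yields
\begin{equation*}
-\lgroup X, D_h\kappa^{-1}(X)\rgroup = \int_M C^{ij}(\nabla_i X)(\nabla_j X)\,\d M + \int_M (\operatorname{tr} C)\,X^2\,\d M .
\end{equation*}

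It then remains only to check that both integrands are non-negative. Since $h\in C^\infty_+(\Gamma)$, Lemma~\ref{lem: supp of regular} gives $W>0$ (positive definite), so its cofactor $C=(\det W)\,W^{-1}$ is positive definite as well; hence $C^{ij}(\nabla_i X)(\nabla_j X)\geq 0$ and $\operatorname{tr} C>0$ pointwise. Both integrals are therefore non-negative, and the second one vanishes only when $X\equiv 0$. Consequently $D^2_h\mathrm{covol}(X,X) = -\lgroup X, D_h\kappa^{-1}(X)\rgroup > 0$ for $X\not\equiv 0$, which is positive definiteness; strict convexity of $\mathrm{covol}$ on $C^\infty_+(\Gamma)$ then follows along any segment, since $C^\infty_+(\Gamma)$ is a convex cone and such segments remain inside it.

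The main obstacle — and the one place where the hyperbolic geometry enters decisively — is the divergence identity $\nabla_i C^{ij}=0$; once it is available (either from the Codazzi property or, more economically, from the established self-adjointness \eqref{eq: k self adj}), everything else is integration by parts and a sign check. I would emphasize the structural contrast with the classical Euclidean theory: there the analogous operator is built from $\nabla^2 h + h\,\mathrm{Id}$ on the sphere, the zeroth-order term enters with the \emph{opposite} sign, and one must contend with a genuine kernel (the linear functions, i.e.~translations) through the Alexandrov--Fenchel inequality. Here the term $(\operatorname{tr} C)\,X^2$ is strictly positive and cooperates with the gradient term, so strict positive definiteness with no kernel is immediate. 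This is exactly why the Fuchsian Brunn--Minkowski and Alexandrov--Fenchel inequalities come out reversed, and why the proof is simpler than in the Euclidean setting.
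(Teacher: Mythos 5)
Your proof is correct, but it takes a genuinely different route from the paper's. The paper deduces Theorem~\ref{thm: vol reg conv} from the stronger mixed statement (Theorem~\ref{thm:hess vol def pos reg}) via a continuity method: positive definiteness is first verified at the round point $h_0\equiv -1$, where $D_{h_0}\kappa^{-1}=\Delta_M-d$ and positivity comes from the spectrum of the Laplacian (Lemma~\ref{lem: vol def pos sphere}); it is then propagated along the segments $th_i+(1-t)h_0$ using ellipticity of the mixed curvature operators (Lemma~\ref{lem:elliptic}), Kato's analytic perturbation of eigenvalues \cite{Kat95}, dominated convergence, and the trivial-kernel Lemma~\ref{lem: noyau trivial reg}, which rests on Alexandrov's algebraic inequality (Theorem~\ref{thm:alg lin det}). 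You instead argue directly at an arbitrary $h\in C^{\infty}_+(\Gamma)$: the tensor $W=\nabla^2h-hg$ is Codazzi on $M$ (this is exactly where curvature $-1$ enters, the $-hg$ term compensating the curvature terms in the commutation of third derivatives), hence its cofactor matrix $C$ is divergence-free; this is the Cheng--Yau argument, and \cite{CY76} is precisely the reference the paper cites for the self-adjointness \eqref{eq: k self adj}. Your alternative derivation of $\nabla_iC^{ij}=0$ from \eqref{eq: k self adj} itself is also sound: since the zeroth-order part of $D_h\kappa^{-1}$ is trivially symmetric, self-adjointness of $C^{ij}\nabla_i\nabla_j$ forces its divergence to vanish (test against $Y\equiv 1$ to kill $\operatorname{div}$ of the divergence, then against arbitrary $Y$). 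Combining with \eqref{eq: der sec vol reg} and integrating by parts yields the manifestly coercive formula
\[
D^2_h\mathrm{covol}(X,X)=\int_M C^{ij}(\nabla_iX)(\nabla_jX)\,\d M+\int_M(\operatorname{tr}C)\,X^2\,\d M,
\]
and positivity of $C=(\det W)W^{-1}$ concludes. What each approach buys: yours is shorter, avoids spectral and perturbation theory altogether, and exhibits the Hessian as an explicitly positive quadratic form (the paper's base case $\int|\nabla X|^2+d\int X^2$ at $h_0$, and its $d=1$ computation $\int h^2+h'^2$, are special cases of your formula); the paper's method, on the other hand, proves the full mixed statement $\mathrm{covol}(\cdot,\cdot,h_1,\ldots,h_{d-1})>0$ for arbitrary $h_1,\ldots,h_{d-1}\in C^{\infty}_+(\Gamma)$, which is what the reversed Alexandrov--Fenchel inequality requires and which the diagonal case alone does not give. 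Your argument would in fact extend to that mixed setting (the mixed cofactor of Codazzi tensors is divergence-free by the same antisymmetrization, and it is positive definite by Alexandrov's Lemma~II invoked in Lemma~\ref{lem:elliptic}), but as written you treat only the case needed for Theorem~\ref{thm: vol reg conv}.
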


Let us have a look at the case $d=1$. In this case $\kappa^{-1}=r$, the unique radius of curvature.
We parametrize the branch of the hyperbola by $(\sinh t,\cosh t)$, and  $h$ becomes a function from $\R$ to $\R_-$. Then
\eqref{eq:laplacian} reads
$$\kappa^{-1}(h)(t)=-h(t)+h''(t),$$
and, as $h$ is $\Gamma$-invariant, we can consider $\kappa^{-1}$ as a linear operator on the set of $C^{\infty}$ functions on 
$[0,\ell]$, 
if $\ell$  is the length of the circle $\H^1/\Gamma$.  Using integration by parts and the fact that 
$h$ is $\ell$-periodic, we get
$$D_h^2\mathrm{covol}(h,h)=-\lgroup h,\kappa^{-1}(h)\rgroup=-\int_0^{\ell} h \kappa^{-1}(h)= \int_0^{\ell}(h^2+h'^2).$$

We will prove a more general version of Theorem~\ref{thm: vol reg conv} in the next section, using the theory of mixed-volume. 
The proof is based on the 
following particular case.

\begin{lemma}\label{lem: vol def pos sphere}
 Let $h_0$ be the  support function of $B^d$ 
(i.e.~$h_0(\eta)=-1$).
Then $D_{h_0}^2 \mathrm{covol}$ is positive definite. 
\end{lemma}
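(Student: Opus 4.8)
The plan is to read the Hessian of the covolume at $h_0$ directly off \eqref{eq: der sec vol reg}, which at this point gives
$$D_{h_0}^2 \mathrm{covol}(X,X)=-\lgroup X, D_{h_0}\kappa^{-1}(X)\rgroup,$$
and then to make the linear operator $D_{h_0}\kappa^{-1}$ completely explicit. The whole point is that $h_0\equiv -1$ supports the model body $B^d=B_1^d$, whose reversed shape operator is as simple as possible, so the linearization of the determinant degenerates to something elementary.

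First I would evaluate the matrix $W$ of \eqref{eq: hess h} at $h=h_0$. Since $h_0$ is constant, $\nabla^2 h_0=0$, hence $W_{ij}=-h_0\delta_{ij}=\delta_{ij}$; that is, $W$ is the identity and all radii of curvature equal $1$ (consistently, $\kappa^{-1}(h_0)=1$). Next I would linearize $\kappa^{-1}(h)=\det\big((\nabla^2 h)_{ij}-h\delta_{ij}\big)$ at $h_0$. Because $h\mapsto W(h)$ is linear, its derivative in the direction $X$ is $W(X)_{ij}=(\nabla^2 X)_{ij}-X\delta_{ij}$, and Jacobi's formula $D\det(W)(V)=\det(W)\,\mathrm{tr}(W^{-1}V)$ together with $W=\mathrm{Id}$ and $\det W=1$ at $h_0$ yields
$$D_{h_0}\kappa^{-1}(X)=\mathrm{tr}\big((\nabla^2 X)_{ij}-X\delta_{ij}\big)=\Delta_{\mathbb{H}^d}X-dX,$$
which is exactly the linear operator already appearing in \eqref{eq:laplacian}.

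Finally I would substitute this into the formula above and integrate by parts on the closed manifold $M=\mathbb{H}^d/\Gamma$. Green's identity, with no boundary term since $M$ is compact without boundary, turns $-\lgroup X,\Delta_{\mathbb{H}^d}X\rgroup$ into $\lgroup \nabla X,\nabla X\rgroup$, so that
$$D_{h_0}^2\mathrm{covol}(X,X)=-\lgroup X,\Delta_{\mathbb{H}^d}X-dX\rgroup=\lgroup \nabla X,\nabla X\rgroup+d\lgroup X,X\rgroup.$$
Both terms are nonnegative and the second vanishes only when $X=0$, so $D_{h_0}^2\mathrm{covol}$ is positive definite. I do not expect any genuine obstacle here: the entire content is that at the round example $B^d$ the Hessian of the covolume reduces to the manifestly positive elliptic expression $\|\nabla X\|^2+d\|X\|^2$, and the only care needed is in tracking the signs of the Laplacian and of $h_0$. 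As a sanity check this recovers the $d=1$ formula $\int_0^\ell (h^2+h'^2)$ displayed after Theorem~\ref{thm: vol reg conv}, taking $\Delta_{\mathbb{H}^1}=\partial^2/\partial t^2$ and $d=1$.
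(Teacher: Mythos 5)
Your proof is correct and follows essentially the same route as the paper: both identify the linearization $D_{h_0}\kappa^{-1}(X)=\Delta_M X - dX$ and conclude positivity from the spectral property of the Laplacian on the compact manifold $M=\mathbb{H}^d/\Gamma$. The only cosmetic difference is that you differentiate the determinant via Jacobi's formula at $W(h_0)=\mathrm{Id}$, whereas the paper differentiates the product of the radii of curvature and then the trace identity \eqref{eq:laplacian}; these are the same computation, and your explicit integration by parts just spells out what the paper invokes as "property of the Laplacian."
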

\begin{proof}
Let $X\in C^{\infty}(\Gamma)$. 
From the definition \eqref{eq: def kappa}  of $\kappa^{-1}$ 
$$D_h \kappa^{-1} (X)=\kappa^{-1}(h) \sum_{i=1}^d r_i^{-1}(h)D_hr_i(X)$$
and as $r_i(h_0)=1$,  
$$D_{h_0} \kappa^{-1} (X)=\sum_{i=1}^d D_{h_0}r_i(X).$$
Differentiating \eqref{eq:laplacian} on both side at $h_0$ and passing to the quotient, the equation above gives
$$D_{h_0} \kappa^{-1} (X)=-dX+\Delta_{M}X,$$
where $\Delta_{M}$ is the Laplacian on $M=\H^d/\Gamma$. From \eqref{eq: der sec vol reg},
$$D^2_{h_0}\mathrm{covol}(X,X)=d\lgroup X,X \rgroup-\lgroup\Delta_{M}X,X\rgroup, $$
which is positive by property of the Laplacian, as $M=\H^d/\Gamma$ is compact. 
\end{proof}

\subsection{Smooth Minkowski Theorem}\label{sub:mink reg}

One can ask if, given a positive function $f$ on a hyperbolic compact manifold $M=\H^d/\Gamma$, 
it is the Gauss curvature of a  $C^{2}_+$ convex Fuchsian surface and if the former one is unique.
By Lemma~\ref{lem: supp of regular} and definition of the Gauss curvature, the question reduces 
to  know if there exists a (unique) function 
$h$ on $M$ such that, in an orthogonal frame on $M$,
$$f=\det((\nabla^2h)_{ij}-h\delta_{ij}) $$
and
$$((\nabla^2h)_{ij}-h\delta_{ij})>0.$$
This PDE problem is solved in \cite{OS83} in the smooth case. Their main result (Theorem~3.4) can be written as
follows.
\begin{theorem}\label{thm: reg mink thm}
 Let $\Gamma$ be a Fuchsian group, 
 $f:\H^d\rightarrow \R_+$ be a positive $C^{\infty}$ $\Gamma$-invariant function.

 There exists a unique  $C^{\infty}_+$ $\Gamma$-convex  body with Gauss curvature $f$.
\end{theorem}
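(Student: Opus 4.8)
The plan is to pass to the compact manifold $M=\H^d/\Gamma$, solve the resulting Monge--Amp\`ere equation by the continuity method, and treat uniqueness separately by the maximum principle. By Lemma~\ref{lem: supp of regular} a $C^\infty_+$ $\Gamma$-convex body is the same datum as a negative $h\in C^\infty(\Gamma)$ for which $A(h):=\big((\nabla^2 h)_{ij}-h\delta_{ij}\big)$ is positive definite, and by \eqref{eq: def kappa} its Gauss curvature equals $1/\det A(h)$; so prescribing the curvature reduces to finding $h\in C^\infty_+(\Gamma)$ with
\[
\det\!\big((\nabla^2 h)_{ij}-h\delta_{ij}\big)=\psi ,
\]
where $\psi$ is the positive, smooth, $\Gamma$-invariant datum determined by $f$ (namely $\psi=1/f$). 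The cone $C^\infty_+(\Gamma)$ is exactly where this fully nonlinear operator is elliptic.

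\emph{Uniqueness.} I would argue by the comparison principle. Suppose $h_1,h_2$ both solve the equation and $h_1\neq h_2$; after exchanging them, $h_1-h_2$ has a strictly positive maximum at some $x_0$. There $\nabla^2(h_1-h_2)(x_0)\le 0$ and $(h_1-h_2)(x_0)>0$, so $A(h_1)(x_0)-A(h_2)(x_0)=\nabla^2(h_1-h_2)(x_0)-(h_1-h_2)(x_0)\,\mathrm{Id}$ is negative definite. Both $A(h_i)(x_0)$ are positive definite, and the determinant is strictly monotone on the positive-definite cone, whence $\det A(h_1)(x_0)<\det A(h_2)(x_0)$, contradicting $\det A(h_1)=\det A(h_2)=\psi$. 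Thus $h_1=h_2$. (The convexity of $\mathrm{covol}$ in Theorem~\ref{thm: vol reg conv} offers an alternative, variational, route to uniqueness.)

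\emph{Existence by continuity.} I would connect $\psi$ to a known solution along $\psi_t=(1-t)+t\psi$, $t\in[0,1]$, and set $S=\{t:\ \det A(h)=\psi_t$ has a solution $h\in C^\infty_+(\Gamma)\}$. Here $0\in S$, because $h_0\equiv-1$, the support function of $B^d$, satisfies $\det A(h_0)=\det(\mathrm{Id})=1$; the goal is $1\in S$. For openness I would apply the implicit function theorem to $\Phi(h)=\det A(h)$ between Hölder completions of $C^\infty(\Gamma)$: its linearization at a solution is $L_h=D_h\kappa^{-1}$, with principal symbol $\operatorname{cof}A(h)>0$, hence a second-order elliptic operator $L_hX=\sum_{ij}c_{ij}\big((\nabla^2 X)_{ij}-X\delta_{ij}\big)$ with $(c_{ij})=\operatorname{cof}A(h)$. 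The maximum principle forces $\ker L_h=0$ (at a positive maximum of $X$ the elliptic part is $\le 0$ while $-\big(\sum_i c_{ii}\big)X<0$), and since $L_h$ is self-adjoint by \eqref{eq: k self adj}, it is an isomorphism; the implicit function theorem gives openness.

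\emph{Closedness and the main obstacle.} This is the analytic heart, requiring $t$-uniform a priori estimates. Two-sided $C^0$ bounds come directly from the equation: at a minimum of $h$ one has $\nabla^2 h\ge 0$, so $A(h)\ge|h|\,\mathrm{Id}$ and $|h|^d\le\psi_t\le\max(1,\max\psi)$, while at a maximum $\nabla^2 h\le 0$ gives $A(h)\le|h|\,\mathrm{Id}$ and $|h|^d\ge\psi_t\ge\min(1,\min\psi)>0$; together these bound $h$ away from $0$ and from $-\infty$. The crucial and hardest step is the interior $C^2$ estimate together with a uniform lower bound on the eigenvalues of $A(h)$ (uniform ellipticity): I would obtain it in the standard way, differentiating $\log\det A(h)=\log\psi_t$ twice and applying the maximum principle to an auxiliary quantity controlling the largest eigenvalue of $A(h)$, exploiting the concavity of $\log\det$ and the compactness of $M$. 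With $C^2$ bounds and uniform ellipticity in hand, Evans--Krylov yields a $C^{2,\alpha}$ estimate and Schauder bootstrapping gives $t$-uniform $C^\infty$ bounds, so any sequence $t_n\to t_\ast$ in $S$ has solutions converging in $C^\infty$ to a solution at $t_\ast$ with $A>0$ preserved; hence $S$ is closed. Being nonempty, open and closed in $[0,1]$, $S=[0,1]$, which produces the solution at $t=1$. The second-order estimate with uniform ellipticity is the real obstacle; everything else is formal once it is available, and it is precisely this PDE that is solved in \cite{OS83}.
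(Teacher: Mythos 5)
Your proposal is correct, and it differs from the paper in an instructive way: it contains strictly more proof than the paper does. The paper's entire argument for this theorem is your first paragraph --- the reduction, via Lemma~\ref{lem: supp of regular} and \eqref{eq: def kappa}, to the Monge--Amp\`ere problem $\det\big((\nabla^2h)_{ij}-h\delta_{ij}\big)=\psi>0$ with $\big((\nabla^2h)_{ij}-h\delta_{ij}\big)>0$ on the compact manifold $\H^d/\Gamma$ --- after which it simply cites \cite{OS83} (their Theorem~3.4) for both existence and uniqueness, proving nothing itself. (One small point in your favour: the paper's displayed equation prescribes $f=\det(\cdot)$, which by \eqref{eq: def kappa} is the \emph{reciprocal} of the Gauss curvature, while the theorem says ``Gauss curvature $f$''; your normalization $\psi=1/f$ is the one consistent with the statement, and the discrepancy is harmless since $f\mapsto 1/f$ permutes the admissible data.) What you add is genuine: your comparison-principle uniqueness argument is complete and correct (at a strictly positive maximum of $h_1-h_2$ one gets $A(h_1)<A(h_2)$ with both positive definite, and strict monotonicity of the determinant on the positive-definite cone gives the contradiction), and your continuity-method skeleton is sound --- the two-sided $C^0$ bounds, the ellipticity and trivial kernel of the linearization, and its invertibility via formal self-adjointness (\eqref{eq: k self adj}, Lemma~\ref{lem:elliptic}) are all carried out correctly. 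The single step you leave to standard technology, the $t$-uniform $C^2$/uniform-ellipticity estimate followed by Evans--Krylov and bootstrapping, is exactly the analytic core of \cite{OS83}; so your proof leans on the cited reference at one precisely identified point, whereas the paper leans on it for everything.
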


\subsection{Mixed curvature and mixed-covolume}\label{sub:mixed reg}

The determinant is a homogeneous polynomial of degree $d$, and we denote 
by $\det( \cdot,\ldots,\cdot)$ its polar form, that is
the unique symmetric $d$-linear form such that
$$\det( A,\ldots,A)=\det(A) $$
for any $d\times d$ symmetric matrix $A$ (see for example Appendix~A in \cite{Hor07}). We will need the following key result.

\begin{theorem}[{\cite[p.~125]{Ale96}}]\label{thm:alg lin det}
 Let $A,A_3\ldots,A_d$ be positive definite  $d\times d$ matrices and $Z$ be a symmetric matrix.
Then
 $$\det (Z,A,A_3,\ldots,A_d)=0\Rightarrow \det(Z,Z,A_3,\ldots,A_d)\leq 0,$$
and equality holds if and only if $Z$ is identically zero.
\end{theorem}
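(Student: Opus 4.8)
The plan is to recognize this as the statement that a certain symmetric bilinear form on the space of symmetric matrices is Lorentzian, and to deduce the asserted reverse Cauchy--Schwarz inequality from that signature. I would fix the positive definite matrices $A_3,\ldots,A_d$ and, on the space $S$ of symmetric $d\times d$ matrices, introduce the symmetric bilinear form $Q(Z,W)=\det(Z,W,A_3,\ldots,A_d)$ together with its quadratic form $Q(Z)=Q(Z,Z)$. The theorem follows once I show that $Q$ has signature $(1,N-1)$, where $N=\dim S=\binom{d+1}{2}$; that is, $Q$ has exactly one positive eigenvalue and trivial kernel.

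Granting the signature, the theorem is pure linear algebra. The mixed determinant of positive definite matrices is positive (a classical fact: positivity of mixed discriminants), so $Q(A)=\det(A,A,A_3,\ldots,A_d)>0$, meaning the vector $A$ lies in the positive part of the Lorentzian form. Since $Q(A)\neq 0$ we get the $Q$-orthogonal splitting $S=\mathbb{R}A\oplus A^{\perp}$, and comparing signatures forces $Q$ to be negative definite on $A^{\perp}$. The hypothesis $\det(Z,A,A_3,\ldots,A_d)=0$ says precisely that $Z\in A^{\perp}$; hence $Q(Z)=\det(Z,Z,A_3,\ldots,A_d)\le 0$, with equality if and only if $Z=0$. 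This is exactly the assertion.

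It remains to establish the signature of $Q$, which I would prove by induction on $d$ combined with a connectedness argument. For the base case $d=2$ there are no auxiliary matrices and $\det(Z,Z)=z_{11}z_{22}-z_{12}^2$; diagonalizing in the variables $z_{11}\pm z_{22}$ exhibits the signature $(1,2)$. For the inductive step the crux is \emph{nondegeneracy}: for every positive definite $A_3,\ldots,A_d$ the form $Q$ has trivial kernel. After normalizing one matrix to the identity by a congruence $A_i\mapsto C^{T}A_iC$ (which only rescales all mixed determinants by $(\det C)^2$), one invokes the reduction identity expressing a $d$-dimensional mixed determinant having an identity slot as an average of $(d-1)$-dimensional mixed determinants over principal submatrices; the induction hypothesis in dimension $d-1$ then forces any null vector of $Q$ to vanish. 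Once nondegeneracy is secured, continuity finishes: the set of admissible tuples $(A_3,\ldots,A_d)$ is convex, hence connected, the eigenvalues of $Q$ depend continuously on it, and by nondegeneracy none can cross zero, so the number of positive eigenvalues is constant along the whole set. Evaluating at $A_3=\cdots=A_d=I$, where $Q(Z)$ is proportional to $(\operatorname{tr}Z)^2-\operatorname{tr}(Z^2)$ and its signature is immediately seen to be $(1,N-1)$ (the negative definite form $-\operatorname{tr}(Z^2)$ perturbed by a rank-one positive semidefinite term, with $Z=I$ witnessing the single positive direction), pins down the signature everywhere.

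The main obstacle is the nondegeneracy step: it is the only place where the induction hypothesis does genuine work, and it must be arranged so that the reduction to dimension $d-1$ really yields a lower-dimensional instance of the same definiteness statement. Everything else — the passage from the signature to the inequality, the $d=2$ base case, and the explicit computation at the identity — is routine.
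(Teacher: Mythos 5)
The paper itself gives no proof of this theorem: it is imported verbatim from Alexandrov \cite[p.~125]{Ale96}, and the proof in that source is precisely the strategy you chose --- show that the bilinear form $Q(Z,W)=\det(Z,W,A_3,\ldots,A_d)$ on symmetric matrices is Lorentzian, by induction on $d$ together with a continuity-of-eigenvalues argument, and then read off the reverse Cauchy--Schwarz inequality. Your reduction of the theorem to the signature statement, your base case $d=2$, the connectedness/eigenvalue-crossing argument, and the computation at $A_3=\cdots=A_d=I$ (giving a form proportional to $(\operatorname{tr}Z)^2-\operatorname{tr}(Z^2)$, of signature $(1,N-1)$) are all correct.

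The gap is exactly at the step you flag as the crux, and it is a genuine one. As written, ``the reduction identity \ldots\ the induction hypothesis in dimension $d-1$ then forces any null vector of $Q$ to vanish'' does not go through: after normalizing $A_3=I$, the identity-slot formula
$$\det(Z,W,I,A_4,\ldots,A_d)=\frac{1}{d}\sum_{k=1}^{d}D_{d-1}\bigl(Z(k|k),W(k|k),A_4(k|k),\ldots,A_d(k|k)\bigr),$$
where $X(k|k)$ is the principal submatrix deleting row and column $k$ and $D_{d-1}$ is the $(d-1)$-dimensional mixed determinant, only says that a certain \emph{sum} vanishes for every $W$. The submatrices $W(k|k)$ are correlated (they all come from one $W$), so you cannot conclude that individual terms vanish, and the $Z(k|k)$ are not null vectors of the lower-dimensional forms in any direct sense; mere nondegeneracy in dimension $d-1$ is not the ingredient that closes the loop. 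What does close it (and what Alexandrov does) is to use the full inductive statement --- the inequality \emph{with its equality case} --- against two specific test matrices. First take $W=E_{kk}$: since $\det(Z,E_{kk},I,A_4,\ldots,A_d)=\frac{1}{d}D_{d-1}(Z(k|k),I_{d-1},A_4(k|k),\ldots,A_d(k|k))$, the kernel condition says each $Z(k|k)$ satisfies the orthogonality hypothesis of the $(d-1)$-dimensional theorem (with $A=I_{d-1}$), hence $D_{d-1}(Z(k|k),Z(k|k),A_4(k|k),\ldots,A_d(k|k))\leq 0$, with equality if and only if $Z(k|k)=0$. Then take $W=Z$: the displayed identity writes $0=Q(Z,Z)$ as the average of these non-positive quantities, so each of them is zero, whence $Z(k|k)=0$ for every $k$; and since $d\geq 3$ in the inductive step, every entry $Z_{ij}$ lies in some principal submatrix with $k\notin\{i,j\}$, so $Z=0$. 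With this mechanism inserted, your proof is complete and coincides with the one in the cited source.
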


For any orthonormal frame on $M=\mathbb{H}^d/\Gamma$ and
for $X_k\in C^{\infty}(\Gamma)$, let us denote $$X_k'':= (\nabla^2 X_k)_{ij}- X_k \delta_{ij}$$
and let us introduce the \emph{mixed curvature}
$$\kappa^{-1}(X_1,\ldots,X_d):=\det(X_1'',\ldots,X_d''). $$ 

As $\mathrm{covol}(X)=-\frac{1}{d+1}\lgroup X, \kappa^{-1}(X)\rgroup$,
 $\mathrm{covol}$ is a homogeneous polynomial of degree $d+1$. Its polar form
$\mathrm{covol}(\cdot,\ldots,\cdot)$ ($(d+1)$ entries) is the \emph{mixed-covolume}. 

\begin{lemma}\label{eq: reg mix vol gen}
We have the following equalities, for $X_i\in C^{\infty}(\Gamma)$.
\begin{enumerate}[nolistsep,label={\bf(\roman{*})}, ref={\bf(\roman{*})}]
 \item $D^{d-1}_{X_2}\kappa^{-1}(X_3,\ldots,X_{d+1})=d! \kappa^{-1}(X_2,\ldots,X_{d+1})$,\label{der kappa}
\item $D_{X_1}\mathrm{covol} (X_2)=(d+1)\mathrm{covol}(X_2,X_1,\ldots,X_1)$,\label{der mixed reg1}
\item $D^2_{X_1}\mathrm{covol} (X_2,X_3)=(d+1)d\mathrm{covol}(X_2,X_3,X_1,\ldots,X_1)$, \label{der mixed reg2}
\item $D^{d}_{X_1} \mathrm{covol} (X_2,\ldots,X_{d+1})=(d+1)!\mathrm{covol}(X_1,\ldots,X_{d+1})$,\label{der mixed reg}
\item $\mathrm{covol}(X_1,\ldots,X_{d+1})=-\frac{1}{d+1}\lgroup X_1, \kappa^{-1}(X_2,\ldots,X_{d+1})\rgroup$.\label{mv reg}
\end{enumerate}
\end{lemma}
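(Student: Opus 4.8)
The plan is to deduce all five identities from a single piece of multilinear algebra — the relation between the iterated directional derivatives of a homogeneous polynomial and its polar form — together with the first-variation formula \eqref{eq: der vol reg} already established for $\mathrm{covol}$. Recall the elementary fact that if $P$ is a homogeneous polynomial of degree $n$, with symmetric $n$-linear polar form $\bar P$ (so that $\bar P(X,\dots,X)=P(X)$), then for $0\le k\le n$
\begin{equation*}
D^k_{X_0}P(Y_1,\dots,Y_k)=\frac{n!}{(n-k)!}\,\bar P(\underbrace{X_0,\dots,X_0}_{n-k},Y_1,\dots,Y_k).
\end{equation*}
This is proved by expanding $P(X_0+t_1Y_1+\cdots+t_kY_k)$ and reading off the multilinear terms, exactly as in the finite-dimensional case; the Fréchet differentiability needed to make sense of the left-hand side was obtained in the previous lemma.

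First I would record that the mixed curvature is the polar form of $\kappa^{-1}$. Indeed $X\mapsto X''=(\nabla^2X)_{ij}-X\delta_{ij}$ is linear, and $\det(\cdot,\dots,\cdot)$ is by definition the polar form of $\det$; hence $(X_1,\dots,X_d)\mapsto\det(X_1'',\dots,X_d'')=\kappa^{-1}(X_1,\dots,X_d)$ is symmetric $d$-linear with diagonal $\det(X'')=\kappa^{-1}(X)$, so by uniqueness of the polar form it \emph{is} $\overline{\kappa^{-1}}$. Likewise $\mathrm{covol}(\cdot,\dots,\cdot)$ is, by definition, the polar form of the degree-$(d+1)$ polynomial $\mathrm{covol}$. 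With these identifications, \ref{der kappa} is the displayed formula applied to $P=\kappa^{-1}$ with $n=d$, $k=d-1$ (the constant being $d!/1!=d!$ and the single remaining slot filled by $X_2$), while \ref{der mixed reg1}, \ref{der mixed reg2} and \ref{der mixed reg} are the cases $n=d+1$ with $k=1,2,d$, the constants $d+1$, $(d+1)d$ and $(d+1)!$ being exactly $\tfrac{(d+1)!}{(d+1-k)!}$. In each case symmetry of the polar form lets me permute the arguments into the order stated.

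The substance is in \ref{mv reg}, which amounts to the full symmetry of the $(d+1)$-linear form $(X_0,X_1,\dots,X_d)\mapsto\lgroup X_0,\kappa^{-1}(X_1,\dots,X_d)\rgroup$. I would start from \eqref{eq: der vol reg}, $D_h\mathrm{covol}(X)=-\lgroup X,\kappa^{-1}(h)\rgroup$, valid for $h\in C^{\infty}_+(\Gamma)$. Both sides are polynomial of degree $d$ in $h$ and linear in $X$, and they agree on the nonempty open cone $C^{\infty}_+(\Gamma)$ (open and nonempty by Lemma~\ref{lem: supp reg cone}), so they agree for all $h,X\in C^{\infty}(\Gamma)$. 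Comparing with \ref{der mixed reg1} (applied with base point $X_1$ and direction $X_2$) gives
\begin{equation*}
(d+1)\,\mathrm{covol}(X_2,X_1,\dots,X_1)=-\lgroup X_2,\kappa^{-1}(X_1,\dots,X_1)\rgroup .
\end{equation*}
Now both sides are homogeneous of degree $d$ in $X_1$; the left-hand polar form in $X_1$ is $\mathrm{covol}(X_2,\cdot,\dots,\cdot)$ and the right-hand one is $-\tfrac{1}{d+1}\lgroup X_2,\kappa^{-1}(\cdot,\dots,\cdot)\rgroup$ (using again that $\kappa^{-1}(\cdot,\dots,\cdot)$ is the polar form of $\kappa^{-1}$ and that $\lgroup X_2,\cdot\rgroup$ is linear). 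By uniqueness of polar forms these two symmetric $d$-linear forms coincide, which after relabelling is precisely \ref{mv reg}.

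I expect the only genuinely delicate point to be \ref{mv reg}: one must (a) legitimately extend \eqref{eq: der vol reg} off the cone — which is where I invoke that two polynomials agreeing on a nonempty open set are equal — and (b) apply uniqueness of polarization in the Fréchet setting, which is harmless since, in any identity, everything reduces to a finite-dimensional computation in the finitely many fixed directions $X_1,\dots,X_{d+1}$. The remaining identities \ref{der kappa}–\ref{der mixed reg} are purely formal consequences of the polarization formula and the bookkeeping of the combinatorial constants.
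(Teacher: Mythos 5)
Your proof is correct and takes essentially the same route as the paper: items (i)--(iv) are the formal polarization--derivative identities for the homogeneous polynomials $\kappa^{-1}$ and $\mathrm{covol}$ (which the paper proves by induction on the order of the derivative, using the definition of the directional derivative and expansion of the multilinear forms), and (v) is obtained by combining the first-variation formula \eqref{eq: der vol reg} with those identities. The only cosmetic difference is that you reach (v) via (ii) together with uniqueness of polar forms, making explicit the polynomial-extension step off the cone $C^{\infty}_+(\Gamma)$, whereas the paper differentiates $d-1$ more times and invokes (i) and (iv); the mathematical content is the same.
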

\begin{proof}
\ref{der kappa} and \ref{der mixed reg} are proved by induction on the order of the derivative,
using the definition of directional derivative and the expansion of the multilinear forms.
\ref{der mixed reg1} and \ref{der mixed reg2} are obtained by the way.
\ref{mv reg} follows from  \eqref{eq: der vol reg}, \ref{der kappa} and \ref{der mixed reg}.
\end{proof}
\begin{corollary}\label{cor: reg mix vol pos}
 For $h_i\in C^{\infty}_+(\Gamma)$, $\mathrm{covol}(h_1,\ldots,h_{d+1})$ is positive.
\end{corollary}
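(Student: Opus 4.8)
The plan is to reduce the statement to two pointwise facts: the negativity of the support functions and the positivity of mixed discriminants of positive definite matrices. I would start from the integral representation of the mixed-covolume given by \ref{mv reg} of Lemma~\ref{eq: reg mix vol gen}, applied with $h_1$ as the distinguished argument:
$$\mathrm{covol}(h_1,\ldots,h_{d+1}) = -\frac{1}{d+1}\lgroup h_1, \kappa^{-1}(h_2,\ldots,h_{d+1})\rgroup.$$

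First I would analyze the integrand of this $L^2(M)$-pairing pointwise. By definition of the mixed curvature, $\kappa^{-1}(h_2,\ldots,h_{d+1})(x) = \det\!\left(h_2''(x),\ldots,h_{d+1}''(x)\right)$, the polar form of the determinant evaluated on the symmetric matrices $h_i''(x) = (\nabla^2 h_i)_{ij}(x) - h_i(x)\delta_{ij}$. Since each $h_i\in C^\infty_+(\Gamma)$, condition \eqref{eq:hess supp} of Lemma~\ref{lem: supp of regular} guarantees that every $h_i''(x)$ is positive definite, at every point $x\in M$.

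The key step is then to invoke the classical positivity of the mixed discriminant (the polar form of $\det$) on positive definite matrices, which gives $\kappa^{-1}(h_2,\ldots,h_{d+1})(x) > 0$ for all $x$. Since membership in $C^\infty_+(\Gamma)$ also forces $h_1$ to be negative everywhere, the product $h_1\cdot \kappa^{-1}(h_2,\ldots,h_{d+1})$ is strictly negative at every point of the compact manifold $M$. Hence $\lgroup h_1, \kappa^{-1}(h_2,\ldots,h_{d+1})\rgroup < 0$, and the factor $-\frac{1}{d+1}$ converts this into the asserted strict positivity of $\mathrm{covol}(h_1,\ldots,h_{d+1})$.

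The only substantive ingredient is the positivity of the mixed discriminant of positive definite matrices, so that is where I would focus. The most transparent justification is a monotonicity argument at a fixed point $x$: since $h_i''(x)$ is positive definite it satisfies $h_i''(x) \geq \epsilon I$ for some $\epsilon>0$, and monotonicity of the mixed discriminant (together with the nonnegativity of mixed discriminants of positive semidefinite matrices, a result of Alexandrov) then yields $\det(h_2''(x),\ldots,h_{d+1}''(x)) > 0$. Alternatively this positivity is already part of the standard theory of the polar form of the determinant and may simply be cited from \cite{Ale96} or \cite{Hor07}. I would stress that the sharper, reverse Alexandrov--Fenchel inequality of Theorem~\ref{thm:alg lin det} is not needed here: only the plain positivity of the fully mixed discriminant enters.
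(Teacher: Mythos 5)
Your proof is correct and follows essentially the same route as the paper: both use the integral representation \ref{mv reg} of Lemma~\ref{eq: reg mix vol gen}, the pointwise positive definiteness of the matrices $h_i''$, the classical positivity of the mixed discriminant on positive definite matrices (the paper cites \cite[(5) p.~122]{Ale96} for exactly this fact), and the negativity of $h_1$. Your additional monotonicity justification of the mixed-discriminant positivity is a fine elaboration of what the paper simply cites, and you are right that Theorem~\ref{thm:alg lin det} is not needed here.
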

\begin{proof}
As $h_i\in C^{\infty}_+(\Gamma)$, $h_i''$ is positive definite, hence 
$\kappa^{-1}(h_2,\ldots,h_{d+1})>0$ \cite[(5) p.~122]{Ale96}.
The result follows from \ref{mv reg} because $h_1<0$.
\end{proof}

Due to \ref{der mixed reg2} of the preceding lemma, the following result implies Theorem~\ref{thm: vol reg conv}.

\begin{theorem}\label{thm:hess vol def pos reg}
 For any $h_1,\ldots,h_{d-1}$ in $C^{\infty}_+(\Gamma)$, the symmetric bilinear form on $(C^{\infty}(\Gamma))^2$
$$\mathrm{covol}(\cdot,\cdot,h_1,\ldots,h_{d-1}) $$
is positive definite.
\end{theorem}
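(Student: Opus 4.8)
The plan is to recast the statement as the negative definiteness of a single linear elliptic operator, and then to fix the sign of its spectrum by deforming all the $h_i$ to the ball. Writing $A_i:=h_i''$ (positive definite for each $h_i\in C^{\infty}_+(\Gamma)$ by Lemma~\ref{lem: supp of regular}) and introducing the second order linear operator
\[
L(Y):=\kappa^{-1}(Y,h_1,\ldots,h_{d-1})=\det(Y'',A_1,\ldots,A_{d-1}),\qquad Y\in C^{\infty}(\Gamma),
\]
identity \ref{mv reg} of Lemma~\ref{eq: reg mix vol gen} gives $\mathrm{covol}(X,Y,h_1,\ldots,h_{d-1})=-\frac{1}{d+1}\lgroup X,L(Y)\rgroup$, so that $L$ is self-adjoint on $L^2(M)$ and the theorem is equivalent to $L$ being negative definite. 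Since each $A_i$ is positive definite, the principal symbol $\det(\xi\otimes\xi,A_1,\ldots,A_{d-1})$ is positive for $\xi\neq 0$, so $L$ is elliptic with discrete spectrum bounded above; its top eigenvalue $\lambda_1=\sup_{Y\neq 0}\lgroup Y,L(Y)\rgroup/\lgroup Y,Y\rgroup$ is attained.

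The key step is to show that $L$ has trivial kernel, and robustly enough that this survives a deformation. Suppose $L(Y)=0$ with $Y\neq 0$. Then $\det(Y'',A_1,\ldots,A_{d-1})=0$ pointwise, so Theorem~\ref{thm:alg lin det} applies with $Z=Y''$ and yields $\det(Y'',Y'',A_2,\ldots,A_{d-1})\leq 0$ pointwise, with equality at a point if and only if $Y''=0$ there. On the other hand, since $L(Y)=0$, applying \ref{mv reg} with $h_1$ in the first slot gives
\[
0=-\frac{1}{d+1}\lgroup Y,L(Y)\rgroup=\mathrm{covol}(Y,Y,h_1,\ldots,h_{d-1})=-\frac{1}{d+1}\lgroup h_1,\det(Y'',Y'',A_2,\ldots,A_{d-1})\rgroup.
\]
As $h_1<0$ and the mixed discriminant above is $\leq 0$, the integrand $h_1\,\det(Y'',Y'',A_2,\ldots,A_{d-1})$ is nonnegative; its integral vanishes, hence it vanishes identically, forcing $\det(Y'',Y'',A_2,\ldots,A_{d-1})\equiv 0$ and therefore $Y''\equiv 0$ by the equality case of Theorem~\ref{thm:alg lin det}. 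But $Y''=0$ means the $1$-homogeneous extension of $Y$ has linear total support function, so by Lemma~\ref{lem:point} it is the support function of a single point, which must be $\Gamma$-fixed; as a Fuchsian group has no nonzero fixed vector, $Y=0$, a contradiction. Hence $\ker L=0$.

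To pin down the sign I deform: for $t\in[0,1]$ set $h_i^{t}:=(1-t)h_i+t h_0$, where $h_0\equiv -1$ is the support function of $B^d$. Each $h_i^{t}$ lies in the convex cone $C^{\infty}_+(\Gamma)$ (Lemma~\ref{lem: supp reg cone}), so the preceding paragraph applies verbatim to the operators $L_t(Y)=\det\big(Y'',(h_1^{t})'',\ldots,(h_{d-1}^{t})''\big)$: none of them has a kernel, i.e.\ $0$ is never an eigenvalue of $L_t$. The family $L_t$ is elliptic, self-adjoint and depends continuously on $t$, so its top eigenvalue $\lambda_1(t)$ varies continuously and never equals $0$ (a vanishing top eigenvalue would exhibit $0$ in the spectrum). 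At $t=1$ one has $(h_i^{1})''=\mathrm{Id}$ and $L_1(Y)=\frac{1}{d}(\Delta_M Y-dY)$, which is negative definite by Lemma~\ref{lem: vol def pos sphere}; thus $\lambda_1(1)<0$. By continuity and $\lambda_1(t)\neq 0$ we obtain $\lambda_1(t)<0$ for all $t$, in particular $\lambda_1(0)<0$, so $L=L_0$ is negative definite. This is exactly the positive definiteness of $\mathrm{covol}(\cdot,\cdot,h_1,\ldots,h_{d-1})$. The main obstacle is the kernel step of the second paragraph: one must combine the pointwise G\aa rding-type inequality of Theorem~\ref{thm:alg lin det} with the sign $h_1<0$ and the absence of $\Gamma$-invariant linear functions to force triviality of the kernel, uniformly along the whole deformation.
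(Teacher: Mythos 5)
Your proof is correct and follows essentially the same strategy as the paper's: deform the $h_i$ linearly to the support function $h_0$ of $B^d$, use ellipticity and self-adjointness of the operator $\kappa^{-1}(\cdot,h_1,\ldots,h_{d-1})$, prove kernel triviality for every $t$ via Theorem~\ref{thm:alg lin det} exactly as in the paper's Lemma~\ref{lem: noyau trivial reg}, and anchor the sign at $h_0$ with Lemma~\ref{lem: vol def pos sphere}. The only difference is organizational: you run an intermediate-value argument on the continuously varying top eigenvalue $\lambda_1(t)$, whereas the paper splits the continuity method into an openness step (Kato's analytic perturbation theorem) and a closedness step (dominated convergence plus the kernel lemma); the same spectral perturbation facts underlie both.
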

\begin{proof}
We use a continuity method. We consider the paths
$h_i(t)=th_i+(1-t)h_0$, $i=1,\ldots,d-1$, $t\in[0,1]$, where $h_0$ is the (quotient of the) support function of 
$B^d$ and we
denote $$\mathrm{covol}_t(\cdot,\cdot):=\mathrm{covol}(\cdot,\cdot,h_1(t),\ldots,h_{d-1}(t)).$$
The result follows from the facts:
\begin{enumerate}[nolistsep,label={\bf(\roman{*})}, ref={\bf(\roman{*})}]
 \item $\mathrm{covol}_0$ is positive definite,\label{preuve1}
\item if, for each $t_0\in[0,1]$,  $\mathrm{covol}_{t_0}$ is positive definite, then 
$\mathrm{covol}_t$ is positive definite for $t$ near $t_0$,\label{preuve2}
\item if $t_n \in [0,1]$ with $t_n\rightarrow t_0$ and $\mathrm{covol}_{t_n}$ is positive definite, then
$\mathrm{covol}_{t_0}$ is positive definite.  \label{preuve3}
\end{enumerate}
\ref{preuve1} is  Lemma~\ref{lem: vol def pos sphere}. Let $t_0$ as in \ref{preuve2}.
By Lemma~\ref{lem:elliptic}, each $\kappa^{-1}(\cdot,h_1(t),\ldots,h_{d-1}(t))$
inherits standard properties of elliptic self-adjoint operators on compact manifolds 
(see for example \cite{Nic07}), and we can apply \cite[Theorem 3.9 p.~392]{Kat95}: as the deformation of the operators 
is polynomial in $t$, the eigenvalues change analytically with $t$, for $t$ near $t_0$.
In particular if $t$ is sufficiently close to $t_0$, the eigenvalues remain positive and \ref{preuve2} holds.

Let $t_n$ be as in \ref{preuve3}. For any non zero $X\in C^{\infty}(\Gamma)$
we have $\mathrm{covol}_{t_n}(X,X)>0$ with
$$\mathrm{covol}_{t_n}(X,X)=\int_M X \kappa^{-1}(X, (1-t_n)h_0+t_nh_1,\ldots,  (1-t_n)h_0+t_nh_{d-1}) \d M.$$
As $\kappa^{-1}$ is multilinear and as $t_n<1$, it is easy to see that the function in the integrand above is
bounded by a function (of the kind $X\sum \vert \kappa^{-1}(X,*,\ldots,*)\vert$
where each $*$ is $h_0$ or a $h_i$) 
which does not depend on $n$ and is continuous on the compact $M$. 
By Lebesgue's dominated convergence theorem, $\mathrm{covol}_{t_0}(X,X)\geq 0$, and 
by Lemma~\ref{lem: noyau trivial reg}   $\mathrm{covol}_{t_0}(X,X)>0$, and \ref{preuve3} is proved.
\end{proof}

\begin{lemma}\label{lem:elliptic}
 For any $h_1,\ldots,h_{d-1}$ in $C^{\infty}_+(\Gamma)$, the operator 
$\kappa^{-1}(\cdot,h_1,\ldots,h_{d-1})$ is formally self-adjoint linear second order elliptic. 
\end{lemma}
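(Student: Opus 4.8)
The plan is to show that the operator $L:=\kappa^{-1}(\cdot,h_1,\ldots,h_{d-1})$ is a second-order differential operator that is \emph{linear}, formally \emph{self-adjoint}, and \emph{elliptic}, treating the three properties separately. Linearity is immediate: by definition $\kappa^{-1}(X,h_1,\ldots,h_{d-1})=\det(X'',h_1'',\ldots,h_{d-1}'')$ where $X''=(\nabla^2X)_{ij}-X\delta_{ij}$, and the polar form $\det(\cdot,\ldots,\cdot)$ is $d$-linear; since $X\mapsto X''$ is itself a linear (second-order) differential operator in $X$, the composition $X\mapsto\det(X'',h_1'',\ldots,h_{d-1}'')$ is linear in $X$ and of second order, with the top-order part coming from the entries $(\nabla^2X)_{ij}$.

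Self-adjointness is exactly the content of \eqref{eq: k self adj}, which the earlier lemma established for $\kappa^{-1}$ itself (via \cite{CY76}). First I would transfer that identity to the mixed operator. Using \ref{der kappa} of Lemma~\ref{eq: reg mix vol gen}, $\kappa^{-1}(X,h_1,\ldots,h_{d-1})$ is, up to the constant $1/(d-1)!$, the iterated directional derivative $D^{d-1}_{h}\kappa^{-1}(X)$ evaluated along the $h_i$'s by polarization; more directly, \ref{mv reg} gives
$$\lgroup X, \kappa^{-1}(Y,h_1,\ldots,h_{d-1})\rgroup = -(d+1)\,\mathrm{covol}(X,Y,h_1,\ldots,h_{d-1}),$$
and the right-hand side is manifestly symmetric in $X$ and $Y$ because the mixed-covolume is a symmetric multilinear form (it is the polar form of $\mathrm{covol}$). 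Hence $\lgroup X,LY\rgroup=\lgroup Y,LX\rgroup$ for all $X,Y\in C^{\infty}(\Gamma)$, which is formal self-adjointness of $L$ on the compact manifold $M$.

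Ellipticity is the step I expect to be the main obstacle, since it requires controlling the principal symbol rather than an algebraic identity. The plan is to compute the symbol of $L$ directly from the definition. Only the second-order term $(\nabla^2X)_{ij}$ of $X''$ contributes to the principal symbol; replacing $(\nabla^2X)_{ij}$ by $\xi_i\xi_j$ for a covector $\xi$, the principal symbol of $L$ at $\xi$ is $\det(\xi\otimes\xi,\,h_1'',\ldots,h_{d-1}'')$ up to the multilinearity normalization. Expanding this polar determinant, this symbol equals (a positive multiple of) the quadratic form $\xi\mapsto \langle C\xi,\xi\rangle$, where $C$ is the matrix of cofactor type built from the positive definite matrices $h_1'',\ldots,h_{d-1}''$ (each $h_i''>0$ because $h_i\in C^{\infty}_+(\Gamma)$, by Lemma~\ref{lem: supp of regular}). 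The key point is that the polar form of the determinant of positive definite matrices, with one slot filled by a rank-one $\xi\otimes\xi$, is strictly positive for $\xi\neq0$; this is the standard fact that the derivative of $\det$ at a positive definite matrix in a positive-semidefinite rank-one direction is positive, and is precisely the kind of inequality recorded in \cite[p.~122]{Ale96} used in Corollary~\ref{cor: reg mix vol pos}. Thus the principal symbol is definite, giving ellipticity. The care needed is purely in verifying the cofactor expansion and its positivity, which is a linear-algebra computation rather than a conceptual difficulty.
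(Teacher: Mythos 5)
Your proposal is correct and follows essentially the same route as the paper: self-adjointness from the symmetry of the mixed-covolume (via \ref{mv reg}), linearity and second order from the multilinearity of the polar determinant, and ellipticity by expanding the mixed determinant so that the principal symbol becomes the quadratic form $\xi\mapsto\det(\xi\otimes\xi,h_1'',\ldots,h_{d-1}'')$ built from the positive definite matrices $h_k''$. The one refinement to make: the strict positivity of that form does not follow from the fact cited in Corollary~\ref{cor: reg mix vol pos} (mixed discriminants of \emph{positive definite} matrices are positive), since $\xi\otimes\xi$ is only semidefinite of rank one; the needed statement that the mixed-cofactor form $\sum_{i,j}\det(h_1'',\ldots,h_{d-1}'')_{ij}\xi_i\xi_j$ is positive definite is precisely Alexandrov's Lemma~II \cite[p.~124]{Ale96}, which is what the paper invokes.
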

\begin{proof}
It is formally self-adjoint because of the symmetry of the mixed-covolume.  It is clearly second order linear.
Let $Z\in C^{\infty}(\Gamma)$. 
From properties of the mixed determinant  \cite[p.~121]{Ale96}, $\kappa^{-1}(Z,h_1,\ldots,h_{d-1})$ can be written,
for an orthonormal frame on $M$,
$$\sum_{i,j=1}^d \det(h_1'',\ldots,h_{d-1}'')_{ij}\left( (\nabla^2 Z)_{ij}-Z\delta_{ij}\right) $$
where $\det(h_1'',\ldots,h_{d-1}'')_{ij}$ is, up to a constant factor, the mixed determinant of the matrices  obtained
from the $h_k''$ by deleting the $i$th row and the $j$th column. Let us consider local coordinates on $M$ around a point $p$
such that at $p$, $\kappa^{-1}(Z,h_1,\ldots,h_{d-1})$ has the expression above.
By definition of $C^{\infty}_+(\Gamma)$, $h_k''$ are positive definite  at $p$ and then at $p$
$$\sum_{i,j=1}^d \det(h_1'',\ldots,h_{d-1}'')_{ij} x_ix_j $$
is positive definite  \cite[Lemma~II p.~124]{Ale96}.
\end{proof}
\begin{lemma}\label{lem: noyau trivial reg}
 For any $h_1,\ldots,h_{d-1}$ in $C^{\infty}_+(\Gamma)$, the symmetric bilinear form
$$\mathrm{covol}(\cdot,\cdot,h_1,\ldots,h_{d-1}) $$
has trivial kernel.
\end{lemma}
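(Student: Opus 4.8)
The plan is to unwind the kernel condition into a pointwise statement about mixed determinants, apply the equality case of Theorem~\ref{thm:alg lin det}, and finish with an elementary spectral argument on the compact manifold $M=\H^d/\Gamma$. Throughout I write $Y''=(\nabla^2 Y)_{ij}-Y\delta_{ij}$ and $h_k''$ for the analogous matrices; by the characterization of $C^\infty_+(\Gamma)$ in Lemma~\ref{lem: supp of regular}, each $h_k''$ is positive definite at every point of $M$.

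First I would translate membership in the kernel. By \ref{mv reg} of Lemma~\ref{eq: reg mix vol gen} we have $\mathrm{covol}(X,Y,h_1,\ldots,h_{d-1})=-\frac{1}{d+1}\lgroup X,\kappa^{-1}(Y,h_1,\ldots,h_{d-1})\rgroup$, so $Y$ lies in the kernel if and only if this quantity vanishes for every $X\in C^{\infty}(\Gamma)$. Since the $L^2(M)$ pairing is nondegenerate on continuous functions, this is equivalent to $\kappa^{-1}(Y,h_1,\ldots,h_{d-1})=\det(Y'',h_1'',\ldots,h_{d-1}'')\equiv 0$ on $M$.

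Next I would feed this into Theorem~\ref{thm:alg lin det} pointwise, taking $Z=Y''$ and the $d-1$ positive definite matrices to be $h_1'',\ldots,h_{d-1}''$. The vanishing just obtained is exactly the hypothesis $\det(Z,A,A_3,\ldots,A_d)=0$, so the theorem gives $\det(Y'',Y'',h_2'',\ldots,h_{d-1}'')\le 0$ at every point, with equality if and only if $Y''=0$ there. The crucial step is to promote this inequality to an equality, and for this I would exploit the sign of the support functions. By full symmetry of the mixed-covolume and again \ref{mv reg}, $\mathrm{covol}(Y,Y,h_1,\ldots,h_{d-1})=-\frac{1}{d+1}\lgroup h_1,\kappa^{-1}(Y,Y,h_2,\ldots,h_{d-1})\rgroup$; but the left-hand side is $0$, since taking $X=Y$ in the kernel condition gives $\mathrm{covol}(Y,Y,h_1,\ldots,h_{d-1})=0$. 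Hence $\int_M h_1\,\det(Y'',Y'',h_2'',\ldots,h_{d-1}'')\,\d M=0$. As $h_1<0$ everywhere and the determinant is $\le 0$ pointwise, the integrand is continuous and nonnegative with zero integral, hence vanishes identically; since $h_1<0$ this forces $\det(Y'',Y'',h_2'',\ldots,h_{d-1}'')\equiv 0$. The equality clause of Theorem~\ref{thm:alg lin det} then yields $Y''=0$ at every point.

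Finally I would convert $Y''=0$, i.e. $(\nabla^2 Y)_{ij}=Y\delta_{ij}$ in every orthonormal frame, into $Y\equiv 0$. Taking the trace and using \eqref{eq:laplacian} gives $\Delta_M Y=dY$ on the compact manifold $M$, and integrating by parts yields $\int_M|\nabla Y|^2\,\d M=-d\int_M Y^2\,\d M$; since $d\ge 1$, both sides vanish and $Y\equiv 0$ (alternatively, a maximum-principle argument on $\nabla^2 Y=Yg$ rules out a positive maximum or negative minimum). I expect the main obstacle to be the third step, namely selecting the integral identity so that the sign of $h_1$ and the reversed Alexandrov inequality combine to upgrade the pointwise inequality to equality; once this is in place, the equality case of Theorem~\ref{thm:alg lin det} and the spectral argument are routine.
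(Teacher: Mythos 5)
Your proof is correct, and its core coincides with the paper's own argument: kernel membership is converted, via nondegeneracy of the $L^2(M)$ pairing and part \ref{mv reg} of Lemma~\ref{eq: reg mix vol gen}, into the pointwise identity $\det(Y'',h_1'',\ldots,h_{d-1}'')\equiv 0$; Theorem~\ref{thm:alg lin det} applied pointwise then gives $\det(Y'',Y'',h_2'',\ldots,h_{d-1}'')\le 0$; and the identity $0=\mathrm{covol}(Y,Y,h_1,\ldots,h_{d-1})=-\frac{1}{d+1}\int_M h_1\det(Y'',Y'',h_2'',\ldots,h_{d-1}'')\,\d M$ together with $h_1<0$ upgrades the inequality to $\det(Y'',Y'',h_2'',\ldots,h_{d-1}'')\equiv 0$, whence $Y''=0$ everywhere by the equality case. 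These are exactly the paper's steps. You diverge only in the final implication $Y''=0\Rightarrow Y\equiv 0$: the paper passes to the $1$-homogeneous extension $\tilde Y$ of $Y$ to $\F$, observes (Subsection~\ref{sub: reg sup}) that its ambient Hessian vanishes, so $\tilde Y$ is affine, and then $\Gamma$-invariance and homogeneity force $\tilde Y=0$; you instead stay on $M$, take the trace to get $\Delta_M Y=dY$, and conclude by integration by parts, since $\int_M|\nabla Y|^2\,\d M=-d\int_M Y^2\,\d M$ forces both sides to vanish. Both finishes are valid. Yours is more elementary and purely intrinsic to $M$ (it reuses the same trace identity \eqref{eq:laplacian} that drives Lemma~\ref{lem: vol def pos sphere} and only needs the spectral sign of the Laplacian on a compact manifold), while the paper's argument makes the geometric content visible: the solutions of $Y''=0$ are exactly the restrictions of linear forms $\langle\cdot,p\rangle_-$, i.e.\ ``support functions of points'', which is the formulation whose analogue reappears in the polyhedral case (Lemma~\ref{lem: noyau trivial pol}), where such kernel elements are ruled out because a translate of a Fuchsian convex body is never a Fuchsian convex body.
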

\begin{proof}
 Suppose that $Z$ belongs to the kernel of $\mathrm{covol}(\cdot,\cdot,h_1,\ldots,h_{d-1}) $. As 
$\lgroup\cdot,\cdot\rgroup$ is an inner product, $Z$ belongs to the kernel of 
$\kappa^{-1}(\cdot,h_1,\ldots,h_{d-1})$:
$$\det(Z'',h_1'',\ldots,h_{d-1}'')=0. $$
 As $h_i''$ are positive definite matrices, by definition of $C^{\infty}_+(\Gamma)$, Theorem~\ref{thm:alg lin det}
implies that $$\det(Z'',Z'',h_2'',\ldots,h_{d-1}'')\leq 0 $$
so 
$$0=\mathrm{covol}(Z,Z,h_1,\ldots,h_{d-1})=-\int_M h_1 \kappa^{-1}(Z,Z,h_2,\ldots,h_{d-1})\leq 0  $$
but $h_1 <0$ hence
$$\det(Z'',Z'',h_2'',\ldots,h_{d-1}'')= 0, $$
and Theorem~\ref{thm:alg lin det} says that $Z''=0$.
Consider the $1$-homogeneous extension $\tilde{Z}$ of the $\Gamma$ invariant map
on $\H^d$ defined by $Z$.  
From Subsection~\ref{sub: reg sup} it follows that 
the Hessian of $\tilde{Z}$ in $\F$ is zero, hence that 
$\tilde{Z}$ is affine. By invariance $\tilde{Z}$ must be constant, and by homogeneity $\tilde{Z}=0$ hence $Z=0$.
\end{proof}

\paragraph{Remark on Fuchsian Hedgehogs}

If we apply Cauchy--Schwarz inequality to the inner product of
Theorem~\ref{thm:hess vol def pos reg}, we get a ``reversed Alexandrov--Fenchel inequality''
(see Theorem~\ref{thm:general}) for
$C^{\infty}_+$ convex bodies, but also for any smooth function $h$ on the hyperbolic manifold
$\H^d/\Gamma$. From Lemma~\ref{lem: supp reg cone} there exist
two elements $h_1,h_2$ of $C^{\infty}_+(\Gamma)$   with $h=h_1-h_2$. Hence
$h$ can be seen as the ``support function'' of the (maybe non convex) hypersurface 
made of the points $\nabla_{\eta}(H_1-H_2), \eta\in\F$.
For example if $h_1$ and $h_2$ are the support functions of respectively
$B_{t_1}$ and $B_{t_2}$, then $h$ is the support function of a pseudo-sphere in $\F$ if
$t_1-t_2> 0$, of a point (the origin) if $t_1-t_2=0$ and of a pseudo-sphere in the past
cone if $t_1-t_2<0$.

More generally, we could introduce ``Fuchsian hedgehogs'',
whose ``support functions'' are difference of  support functions of two $\Gamma$-convex bodies. 
 They form the vector space in which the support functions of $\Gamma$-convex bodies naturally live.
In the Euclidean space, they were introduced in \cite{LLR88}. An Euclidean analog of the reversed Alexandrov--Fenchel inequality 
for smooth Fuchsian  hedgehogs described above is done in \cite{MM99}, among other results.
It would be interesting to know if other results about hedgehogs have a Fuchsian analogue.

\section{Polyhedral case}\label{sec:pol}

The classical analogue of this section comes from \cite{Ale37} (see \cite{Ale96}). See also
\cite{Sch93} and \cite{Ale05}. The toy example $d=1$ is considered in the note \cite{polymink}.

\subsection{Support vectors}\label{sub:pol}

\paragraph{Definition of Fuchsian convex polyhedron}

 The notation 
$a^{\bot}$ will represent the affine hyperplane over the vector hyperplane orthogonal to the
vector $a$ and passing through $a$:
\begin{equation}\label{eq def affine}
a^{\bot}=\{x\in\R^{d+1}| \langle x,a\rangle_-=\langle a,a\rangle_- \}. 
\end{equation}
 
\begin{definition}
Let $R=(\eta_1,\ldots,\eta_n)$, $n\geq 1$, with $\eta_i$ (pairwise non-collinear) vectors in the future cone $\mathcal{F}$, 
and let $\Gamma$ be a Fuchsian group.
 A \emph{$\Gamma$-convex  polyhedron} is the boundary of the intersection of the half-spaces 
bounded by the hyperplanes
$$(\gamma \eta_i)^{\bot}, \forall \gamma\in\Gamma, \forall i=1,\ldots,n,$$
 such that the vectors $\eta_i$ are inward pointing. 
\end{definition}

See Figure~\ref{fig:polyhedron} for a simple example.

\begin{figure}
\centering
\includegraphics[scale=0.4]{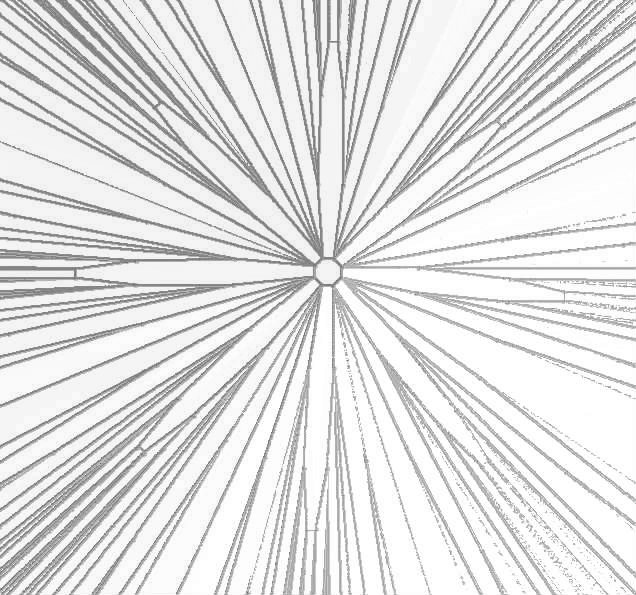}  
\caption{A piece of a $\Gamma$-convex polyhedron in $d=2$ seen from the bottom.
It is made with the orbit of $(0,0,1)$ for the Fuchsian group having a regular octagon as fundamental domain in $\H^2$.
\label{fig:polyhedron}} 
  \end{figure}

\begin{lemma}
A $\Gamma$-convex polyhedron $P$ 
\begin{enumerate}[nolistsep,label={\bf(\roman{*})}, ref={\bf(\roman{*})}]
\item is a $\Gamma$-convex body, \label{basic pol 1}
\item has a countable number of facets,
\item is locally finite,
\item  each face is a  convex  Euclidean polytope. 
\end{enumerate}
\end{lemma}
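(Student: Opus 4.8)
Write $H_a^-:=\{x\in\R^{d+1}\mid \langle x,a\rangle_-\le\langle a,a\rangle_-\}$ for the half-space bounded by $a^{\bot}$ on the side into which the inward vector $a$ points, and set $K:=\bigcap_{\gamma\in\Gamma,\,1\le i\le n}H_{\gamma\eta_i}^-$, so that the polyhedron is $P=\partial K$. Put $t_i:=\sqrt{-\langle\eta_i,\eta_i\rangle_-}$ and $t_{\max}=\max_i t_i$. The plan is to prove \ref{basic pol 1} by checking Definition~\ref{def: fuchsian body} directly, and then to extract a single local finiteness estimate that yields the remaining three items.

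\textbf{Proof of \ref{basic pol 1}.} Convexity and closedness of $K$ are immediate, being an intersection of closed half-spaces. Since each $\gamma_0\in\Gamma$ is a linear isometry, $\gamma_0 H_a^-=H_{\gamma_0 a}^-$, so $\gamma_0 K=K$ because $\gamma_0\Gamma=\Gamma$; thus $K$ is $\Gamma$-invariant. The hyperplane $(\gamma\eta_i)^{\bot}$ is tangent to the pseudosphere $\H^d_{t_i}$ at $\gamma\eta_i$, and $B^d_{t_i}$ lies on its future side, so $B^d_{t_i}\subseteq H_{\gamma\eta_i}^-$ for every $\gamma$; hence $B^d_{t_{\max}}\subseteq\bigcap_i B^d_{t_i}\subseteq K$ and $K$ is nonempty with nonempty interior. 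The substantial point is $K\subseteq\F$. Fix one index and abbreviate $\eta=\eta_i$, $t=t_i$; for $x\in K$ the function $\phi(w)=\langle x,w\rangle_-$ on $\H^d_t$ satisfies $\phi(\gamma\eta)\le-t^2<0$ for all $\gamma$. If $x$ were past causal or zero then $\phi\ge0$ on $\overline{\F}$ by \ref{elem1} of Sublemma~\ref{lem:elementary}, a contradiction. If $x$ were space-like, $\{w\in\H^d_t\mid\phi(w)>0\}$ is an open geodesic half-space; if $x$ were future light-like, $\{w\in\H^d_t\mid\phi(w)>-t^2\}$ is the interior of a horoball, its level sets being horospheres centred at the ideal point of $x$. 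In both cases the region contains the interior of a horoball, which by Sublemma~\ref{sublem: horo} meets the orbit $\Gamma\eta$, contradicting $\phi(\gamma\eta)\le -t^2$. Hence $x$ is future time-like and $K\subseteq\F$; being a closed subset of the non-closed set $\F$, $K$ is proper, which establishes \ref{basic pol 1}.

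\textbf{Countability and local finiteness.} Since $\Gamma$ acts freely and cocompactly it is discrete, hence countable, so the family $\{(\gamma\eta_i)^{\bot}\}$ is countable; as every facet of $K$ lies in one of these hyperplanes, this gives the countable number of facets (the second claim). For local finiteness it suffices to show that any compact subset of $\F$, which lies in a shell $C=\{x\in\F\mid a^2\le -\langle x,x\rangle_-\le b^2\}$ with $0<a\le b$, meets only finitely many $(\gamma\eta_i)^{\bot}$. Writing $x=s\,w$ with $w\in\H^d$, $s\in[a,b]$, and $\gamma\eta_i=t_i\,u_\gamma$ with $u_\gamma\in\H^d$, the incidence $x\in(\gamma\eta_i)^{\bot}$ reads $s\,t_i\langle w,u_\gamma\rangle_-=-t_i^2$, that is
\[
\cosh \operatorname{dist}_{\H^d}(w,u_\gamma)=-\langle w,u_\gamma\rangle_-=\frac{t_i}{s}\le\frac{t_{\max}}{a}.
\]
Thus $u_\gamma$ lies within hyperbolic distance $\arccosh(t_{\max}/a)$ of the compact radial projection of $C$, hence in a bounded region of $\H^d$; as the orbit is discrete this leaves only finitely many $\gamma$, and there are finitely many $i$. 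So the defining hyperplanes, and therefore the faces, form a locally finite family (the third claim).

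\textbf{Faces are Euclidean polytopes.} A face $F$ of $K$ has the form $K\cap\mathcal H$ for a support hyperplane $\mathcal H$, which by \ref{suppspace} of Lemma~\ref{lem: future convex} is space-like; hence $\langle\cdot,\cdot\rangle_-$ restricts to a positive definite form on $\mathcal H$, making it a Euclidean $\R^d$, and $F\subseteq\mathcal H$ inherits a Euclidean structure. Applying a linear isometry carrying the normal of $\mathcal H$ to the time axis shows $\mathcal H\cap\overline{\F}$ is a round Euclidean ball, so $F\subseteq\mathcal H\cap\overline{\F}$ is compact. By the local finiteness just proved, only finitely many defining half-spaces are active on a neighbourhood of the compact $F$ inside $\mathcal H$, so $F$ has finitely many facets; a bounded convex subset of $\R^d$ with finitely many facets is the intersection of finitely many half-spaces, i.e.\ a convex polytope. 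This is the final assertion. I expect the two genuine obstacles to be the inclusion $K\subseteq\F$ and the local finiteness estimate, precisely the two places where cocompactness of $\Gamma$ enters, through Sublemma~\ref{sublem: horo} and through discreteness of the orbit respectively; everything else is bookkeeping.
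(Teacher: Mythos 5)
Your proof is correct in substance, but it takes a genuinely different route from the paper's. The paper reduces to the single-orbit polyhedra $P_i$ and then invokes the Dirichlet tessellation of $\H^d$: after rewriting the Dirichlet regions in terms of $\langle\cdot,\cdot\rangle_-$, it shows that codimension-$k$ faces of $P_i$ project radially onto codimension-$k$ faces of that tessellation, and all four assertions (containment in $\F$ via the vertices, local finiteness, countability, compactness of facets) are read off from the corresponding properties of the tessellation. You instead work directly with the half-space description: the causal trichotomy for $x\in K$ combined with Sublemma~\ref{sublem: horo} forces $K\subseteq\F$ (an adaptation of the argument proving Lemma~\ref{lem: future convex}, which cannot be quoted for this purpose since that lemma presupposes $K\subseteq\F$); local finiteness comes from the estimate $\cosh\dist(w,u_\gamma)\le t_{\max}/a$ together with discreteness of the orbits; and compactness of faces comes from space-likeness of support planes, i.e.\ from Lemma~\ref{lem: future convex}, legitimately applicable once item (i) is proved. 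Your route is more self-contained, needing only orbit discreteness and the horoball sublemma, while the paper's route establishes along the way the radial correspondence between the face lattice of $P_i$ and the cells of the Dirichlet tessellation, a finer combinatorial picture than local finiteness alone.

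Two loose ends, both easily repaired. First, in the local-finiteness step, the radial projection of the shell $C$ is all of $\H^d$, hence not compact; the estimate must be applied with $w$ ranging over the radial projection of the given compact set, the shell only supplying the lower bound $s\ge a$ (this is clearly what you intend, but it is not what the sentence says). Second, your closing claim that a bounded convex subset of $\R^d$ with finitely many facets is a convex polytope is false as stated: a closed disc has no facets at all and is not a polytope. What your argument actually yields is stronger and sufficient: $F$ coincides on an open neighbourhood with $Q=\mathcal H\cap\bigcap_{j\in J}H_j^-$ for a finite set $J$ of active half-spaces; then $F$, being nonempty, compact, and both open and closed in the connected convex set $Q$, equals $Q$, a bounded intersection of finitely many half-spaces in $\mathcal H\cong\R^d$, hence a convex polytope.
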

Here convex polytope means convex compact polyhedron.
\begin{proof}
We denote by $P_i$ the $\Gamma$-convex polyhedron
made from the vector $\eta_i$ and the group $\Gamma$. 
We will prove the lemma for $P_i$. The general case follows 
because $P$ is the intersection of a finite number of $P_i$.
 All the elements of $\Gamma \eta_i$ belong to  $\H_{t_i}^d$,
 on which $\Gamma$ acts cocompactly. Up to a  homothety, it is more suitable to consider that $\H_{t_i}^d$ is
$\H_1^d=\mathbb{H}^d$.

 Let $a\in \Gamma \eta_i$ and  $D_a(\Gamma)$ be the Dirichlet region (see \eqref{eq:dirichelt}). 
Recall that $D_a(\Gamma)$ are convex compact polyhedra in $\H^d$, and that 
the set of the Dirichlet regions $D_a$, for all $a\in\Gamma \eta_i$, is a locally finite tessellation of $\mathbb{H}^d$. 
Using \eqref{eq:hyp dist}, the Dirichlet region 
can be written
$$D_a(\Gamma)=\{p\in\mathbb{H}^d\vert \langle a,p\rangle_-\geq\langle \gamma a,p\rangle_-, \forall \gamma\in\Gamma\setminus\{Id\} \}. $$
Let $a_1,a_2\in \Gamma \eta_i$ such that $D_{a_1}(\Gamma)$ and $D_{a_2}(\Gamma)$ have a common facet.
This facet is contained in the intersection of $\H^d$ with the hyperplane 
 $$\{p\in\R^{d+1}|\langle a_1,p\rangle_-=\langle a_2,p\rangle_-\},$$
and this hyperplane also contains $a_1^{\bot}\cap a_2^{\bot}$ by \eqref{eq def affine}.
It follows that vertices of $P_i$ (codimension $(d+1)$ faces)
 project along rays from the origin  onto the vertices of the Dirichlet tessellation.
In particular the vertices are in $\F$, 
so  $P_i\subset \F$, because it is the convex hull of its vertices \cite[1.4.3]{Sch93} and 
$\F$ is convex. In particular $P_i$ is a $\Gamma$-convex body due to Definition~\ref{def: fuchsian body}.
And  codimension $k$ faces of $P_i$ 
 projects onto codimension $k$ faces of the Dirichlet tessellation, so
 $P_i$ is locally finite with a countable number of facets.

Facets of $P_i$ are closed, as they project onto compact sets. 
In particular they are bounded  as contained in $\mathcal{F}$ hence compact. They are convex polytopes by construction, and Euclidean as contained in space-like planes.
Higher codimension faces are convex Euclidean polytopes as intersections of convex Euclidean polytopes.
\end{proof}

\paragraph{Support numbers}

The extended support function of a $\Gamma$-convex polyhedron $P$ is piecewise linear 
(it is linear on each solid angle determined by the normals
of the support planes at a vertex), it is why the data of the extended support function on each 
inward unit normal of 
the facets suffices to determine it. If $\eta_i$ is such a vector and $h$ is the support function of $P$, we call the positive 
number
$$h(i):=-h(\eta_i)$$ the \emph{$i$th support number} of $P$. 

 The facet with normal $\eta_i$ is denoted by $F_i$. Two adjacent facets
$F_i$ and $F_j$ meet at a codimension $2$ face  $F_{ij}$.
If three facets $F_i,F_j,F_k$ meet at a codimension $3$ face, then this face is denoted
by $F_{ijk}$.  We denote by $\varphi_{ij}$
the hyperbolic distance between  $\eta_i$ and $\eta_j$, given by (see for example 
\cite[(3.2.2)]{Rat06})
\begin{equation}\label{eq:hyp dist}
-\cosh \varphi_{ij}=\langle \eta_i,\eta_j\rangle_-. 
\end{equation}

Let $p_i$ be the foot of the perpendicular from the origin to the hyperplane $\mathcal{H}_i$ containing the facet $F_i$.  
In $\mathcal{H}_i$, let $p_{ij}$ be the foot of the perpendicular from $p_i$ to  $F_{ij}$. We denote by $h_{ij}$  
the signed distance
from $p_i$ to $p_{ij}$: it is non negative if $p_i$ is in the same side of $F_{j}$ than $P$.
See Figure~\ref{fig:suppnumb}.

\begin{figure}[ht]
\begin{center}
\input 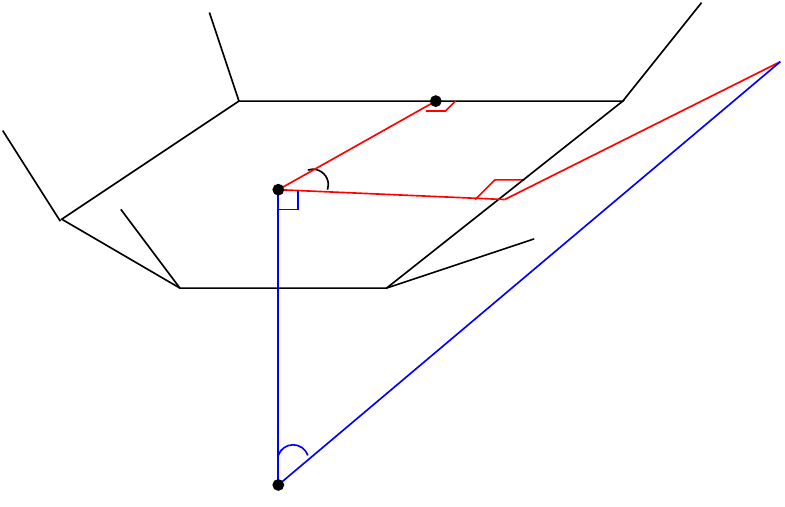_t
\end{center}
\caption{Supports numbers of a $\Gamma$-convex polyhedron.}\label{fig:suppnumb}
\end{figure}

For each $i$, $h_{ij}$ are the support numbers of the convex Euclidean polytope $F_i$.
($\mathcal{H}_i$ is identified with the Euclidean space $\R^d$, with $p_i$ as the origin.)
If we denote  by $\omega_{ijk}$ the angle between $p_ip_{ij}$ and $p_ip_{ik}$, it is well-known that \cite[(5.1.3)]{Sch93}
\begin{equation}\label{eq:supp nb eucl}
h_{ikj}=\frac{h_{ij}-h_{ik}\cos\omega_{ijk}}{\sin \omega_{ijk}}.
\end{equation}
We have a similar formula in Minkowski space \cite[Lemma~2.2]{polymink}:
\begin{equation}\label{eq: supp num mink}
 h_{ij}=-\frac{h(j)-h(i)\cosh \varphi_{ij}}{\sinh \varphi_{ij}}.
\end{equation}
In particular,
\begin{eqnarray}
 \ \label{eq:der lor1} &&\frac{\partial h_{ij}}{\partial h(j)}=-\frac{1}{\sinh \varphi_{ij}}, \\ 
\ &&\frac{\partial h_{ij}}{\partial h(i)}=\frac{\cosh \varphi_{ij}}{\sinh \varphi_{ij}}.\label{eq:der lor2}
\end{eqnarray}
If $h(i)=h(j)$ and if the quadrilateral is deformed under this condition, then
\begin{equation}
 \frac{\partial h_{ij}}{\partial h(i)}=\frac{\cosh \varphi_{ij}-1}{\sinh \varphi_{ij}}.\label{eq:der lor3}
\end{equation}

\paragraph{Space of polyhedra with parallel facets}

Let $P$ be a $\Gamma$-convex polyhedron. We label 
the facets of $P$
in a fundamental domain for the action of $\Gamma$. This set of label
is denoted by $\I$, and $\Gamma \I$ labels all the facets of $P$.
Let $R=(\eta_1,\ldots,\eta_n)$ be the inward unit normals of the facets of $P$ 
labeled by $\I$.

We denote by  $\mathcal{P}(\Gamma,R)$ the set of
$\Gamma$-convex  polyhedra with inward unit normals belonging to the set $R$.
By identifying a $\Gamma$-convex polyhedron with its  support numbers labeled by $\I$,  $\mathcal{P}(\Gamma,R)$
is a subset of $\mathbb{R}^n$. (The corresponding vector of $\R^n$ is the
 \emph{support vector} of the polyhedron.)
Note that this identification does not commute with the sum.
Because the sum of two piecewise linear functions is a piecewise linear function,
the Minkowski sum of two $\Gamma$-convex polyhedra is a $\Gamma$-convex polyhedron.
(More precisely, the linear functions under consideration are of the form $\langle \cdot, v\rangle_-$, 
with $v$ a vertex of a polyhedron, hence a future time-like vector, and the sum of two future
time-like vectors is a future time-like vector.)
But even if the two polyhedra have parallel facets, new facets can appear in the sum.
Later we will introduce a class of polyhedra such that the support vector of the Minkowski sum is 
the sum of the support vectors.

\begin{lemma}\label{lem:ens pol}
 The set $\mathcal{P}(\Gamma,R)$ is a non-empty open convex cone of $\mathbb{R}^n$.
\end{lemma}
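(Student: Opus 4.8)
The plan is to establish the four assertions—cone, non-empty, convex, open—in that logical order, the only delicate one being convexity.

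\textit{Cone and non-emptiness.} For a $\Gamma$-convex polyhedron $P$ with support vector $\vec h$ and $\lambda>0$, the body $\lambda P$ has the same facet normals and, since $H_{\lambda P}=\lambda H_P$, support vector $\lambda\vec h$; hence $\mathcal{P}(\Gamma,R)$ is stable under multiplication by positive scalars. For non-emptiness I would test the vector $(1,\dots,1)$. The associated set $K=\{x\in\F:\langle x,\gamma\eta_i\rangle_-\le -1,\ \forall\gamma\in\Gamma,\ \forall i\}$ contains $B^d$, because the extended support function of $B^d$ equals $-1$ on $\H^d$, so $\langle x,\gamma\eta_i\rangle_-\le H_{B^d}(\gamma\eta_i)=-1$ for $x\in B^d$. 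Each $\eta_i$ lies on $\partial B^d\subset K$ and, by \eqref{eq:hyp dist}, satisfies $\langle \eta_i,\gamma\eta_j\rangle_-<-1$ whenever $\gamma\eta_j\neq\eta_i$; thus $\eta_i$ meets only the $i$-th constraint with equality and lies in the relative interior of the facet $F_i$, which is therefore present and $d$-dimensional. So all $n$ facets occur and $(1,\dots,1)\in\mathcal{P}(\Gamma,R)$.

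\textit{Convexity.} Let $\vec h_0,\vec h_1\in\mathcal{P}(\Gamma,R)$ be realized by $P_0,P_1$, fix $t\in[0,1]$, and put $\vec h_t=(1-t)\vec h_0+t\vec h_1$. Let $K_t$ be the set cut out by the half-spaces with normals $\Gamma R$ at heights $-h_t(i)$; it is a $\Gamma$-convex polyhedron by the same reasoning that showed the $P_i$ to be $\Gamma$-convex bodies, and its facet normals lie in $\Gamma R$ by construction, so I only need to show every $F_i$ is present. For this I would compare $K_t$ with the Minkowski combination $M_t=(1-t)P_0+tP_1$, a $\Gamma$-convex polyhedron with $H_{M_t}=(1-t)H_{P_0}+tH_{P_1}$ and thus support number $h_t(i)$ in direction $\eta_i$. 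The sets $(1-t)F_i(P_0)+tF_i(P_1)$ are $d$-dimensional facets of $M_t$, being Minkowski sums of full-dimensional subsets of parallel hyperplanes orthogonal to $\eta_i$, so $M_t$ realizes all $n$ facets. Because every $x\in M_t$ obeys $\langle x,\gamma\eta_j\rangle_-\le H_{M_t}(\gamma\eta_j)=-h_t(j)$ we have $M_t\subseteq K_t$, while the two bodies share the support number $h_t(i)$; hence the nonempty facet $F_i(M_t)$ is contained in $F_i(K_t)$, which is therefore nonempty and $d$-dimensional for each $i$. Consequently $K_t\in\mathcal{P}(\Gamma,R)$ with support vector $\vec h_t$, and $\mathcal{P}(\Gamma,R)$ is convex. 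I expect this to be the main obstacle, precisely because $M_t$ may acquire extra facets (the sum does not respect the parallel-facet labelling, as warned above); the device that makes it work is to pass to the larger body $K_t$, cut out by $\Gamma R$ alone, which automatically discards the spurious normals while inheriting the genuine facets of $M_t$.

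\textit{Openness.} At $\vec h\in\mathcal{P}(\Gamma,R)$ the polyhedron $K_{\vec h}$ has, modulo $\Gamma$, finitely many vertices, each the solution of an invertible linear system formed by $d+1$ of the normals in $\Gamma R$; these vertices depend affinely on $\vec h$, and at every vertex the remaining defining inequalities are strict. By cocompactness only finitely many such strict inequalities need be controlled modulo $\Gamma$, so they persist for $\vec h'$ near $\vec h$; the combinatorial type is then unchanged, no facet degenerates, and no new normal can appear since $K_{\vec h'}$ is still cut out by $\Gamma R$. Hence $\vec h'\in\mathcal{P}(\Gamma,R)$ and the set is open.
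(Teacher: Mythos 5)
Your cone and non-emptiness arguments coincide in substance with the paper's: the paper also tests $(1,\dots,1)$, and \eqref{eq:hyp dist} makes the orbit points themselves the witnesses. For convexity you take a genuinely different, and correct, route. The paper first rewrites membership of $h$ in $\mathcal{P}(\Gamma,R)$ as a pointwise condition: for each $j$ there exists $x$ with $\langle \eta_j,x\rangle_-=-h(j)$ and $\langle \gamma\eta_i,x\rangle_-<-h(i)$ for all other constraints; additivity is then immediate, since the sum of a witness for $h$ and a witness for $h'$ is a witness for $h+h'$. You instead compare the polyhedron $K_t$ cut out by $\Gamma R$ at heights $h_t$ with the Minkowski combination $M_t=(1-t)P_0+tP_1$, using that the face of $M_t$ in direction $\eta_i$ contains $(1-t)F_i(P_0)+tF_i(P_1)$ (only this easy inclusion is needed, and it follows from $H_{M_t}=(1-t)H_{P_0}+tH_{P_1}$) and that $M_t\subseteq K_t$ with the same support number in direction $\eta_i$. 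This buys a structural picture of how the spurious facets of the Minkowski sum get discarded; the paper's characterization is simply leaner, because the one witness statement carries non-emptiness, additivity and openness simultaneously.

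The genuine gap is in your openness argument. You assert that each vertex of $K_{h}$ is the solution of an invertible system formed by $d+1$ of the normals and that ``at every vertex the remaining defining inequalities are strict'', and you deduce that the combinatorial type is locally constant. That assertion is precisely simplicity of every vertex, and it fails for typical elements of $\mathcal{P}(\Gamma,R)$: the paper itself notes, in the subsection on mixed face areas, that $\Gamma$-convex polyhedra defined by a single orbit are never simple when $d>1$ (for the polyhedron of Figure~\ref{fig:polyhedron}, eight facets meet at each vertex, while $d+1=3$). At a non-simple vertex more than $d+1$ inequalities are equalities; after perturbing $h$, the point solving your chosen $(d+1)\times(d+1)$ system will in general violate the other constraints that were active there, the vertex splits, and the combinatorial type does change under arbitrarily small perturbations. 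So the chain ``vertices persist $\Rightarrow$ combinatorics unchanged $\Rightarrow$ no facet degenerates'' breaks at its first link. Openness is nevertheless true, and the repair drops vertices altogether: for each $j$ (finitely many modulo $\Gamma$) pick a point in the relative interior of $F_j$, where the $j$-th constraint is an equality and every other constraint is strict; by discreteness of the orbits, all but finitely many constraints hold at that point with margin bounded below, so for $h'$ close to $h$ a nearby point on the perturbed $j$-th hyperplane still satisfies all the other constraints strictly, i.e.\ the facet with normal $\eta_j$ survives. This is once more the paper's witness characterization, which is the natural tool here.
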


\begin{proof}
The condition that the hyperplane supported by $\eta_j$ contains a facet of 
the polyhedron with support vector $h$ can be written as 
$$\exists x\in\mathbb{R}^{d+1}, \forall i\in\Gamma\mathcal{I}, i\not=j, \langle \eta_i,x\rangle_- <-h(i)\mbox{ and } 
\langle \eta_j,x\rangle_-= -h(j).$$
By \eqref{eq:hyp dist}  $\mathcal{P}(\Gamma,R)$ always contains the vector $(1,\ldots,1)$.
The set is  clearly open as a facet can't disappear for any sufficiently small deformation.
It is also clearly invariant under homotheties of positive scale factor. So to prove that $\mathcal{P}(\Gamma,R)$
is a convex cone it suffices to check that if $h$ and $h'$ belongs to 
$\mathcal{P}(\Gamma,R)$ then  $h+h'$ belongs to $\mathcal{P}(\Gamma,R)$.
It is immediate from the above characterization.
\end{proof}

\subsection{Covolume of convex Fuchsian polyhedra}

Let $F$ be a facet of a $\Gamma$-convex polyhedron $P$, contained in a space-like
hyperplane $\mathcal{H}$, with support number $h$. For the induced metric,
 $\mathcal{H}$ is isometric to the Euclidean space $\R^d$, in which
$F$ is a convex polytope, with volume $A(F)$. We call $A(F)$ the \emph{area} of the facet.
Let $C$ be the cone in $\R^{d+1}$ over $P$ with apex the origin. Its volume 
$V(C)$ is invariant under the action of an orientation and time-orientation preserving linear isometry
(they have determinant $1$), hence to compute $V(C)$ we can suppose that 
$\mathcal{H}$ 
is an horizontal hyperplane (constant last coordinate).
For horizontal hyperplanes, the induced metric is the same
if the ambient space is endowed with the standard Lorentzian metric or with the standard Euclidean metric.
So the well-known formula applies:
$$
V(C)=\frac{1}{d+1}h A(F),
$$
and then
$$
\mathrm{covol}(P)=\frac{1}{d+1}\sum_{i\in\mathcal{I}} h(i) A(F_i).
$$

Identifying $P$ with its support vector $h$, if $\big\langle \cdot,\cdot \big\rangle$ is the usual inner product of $\R^n$, 
we have
\begin{equation}\label{eq:def vol pol}
\mathrm{covol}(h)=\frac{1}{d+1}\big\langle h, A(h)\big\rangle \end{equation}
where $A(h)$ is the vector formed by the area of the facets $A(F_i)$.

\begin{lemma}\label{lem: der vol pom}
The function $\mathrm{covol}$ is $C^2$  on  $\R^n$, and for $h\in \mathcal{P}(\Gamma,R), X,Y\in \R^n$,
we have:
\begin{eqnarray}
 \  D_{h}\mathrm{covol}(X)=\big\langle X,A(h) \big\rangle, \label{eq: der vol pol}\\
\ D_{h}^2 \mathrm{covol} (X,Y)=\big\langle X, D_{h}A(Y)\big\rangle. \label{eq: der sec vol pol}
\end{eqnarray}
Moreover  \eqref{eq: der vol pol} is equivalent to  
\begin{equation}\label{eq: A self adj}
 \big\langle X, D_hA (Y)\big\rangle= \big\langle Y, D_hA (X)\big\rangle.
\end{equation}
\end{lemma}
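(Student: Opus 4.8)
The plan is to follow the skeleton of the $C^2_+$ case: first secure smoothness together with a ``rough'' first derivative, then reduce the two displayed formulas to the self-adjointness \eqref{eq: A self adj}, and finally prove self-adjointness by computing the Jacobian of $A$ with the formulas \eqref{eq:der lor1}--\eqref{eq:der lor2} already at hand. For the smoothness, note that on the open convex cone $\mathcal{P}(\Gamma,R)$ (Lemma~\ref{lem:ens pol}) the combinatorial type is constant, so by \eqref{eq: supp num mink} each Euclidean support number $h_{ij}$ of the facet $F_i$ is a fixed \emph{linear} function of the entries $h(i),h(j)$ of the support vector. Since the area of a Euclidean polytope with prescribed facet normals is a polynomial of degree $d$ in its support numbers, each $A(F_i)$ is a polynomial of degree $d$ in $h$; hence $A$ is a polynomial map, homogeneous of degree $d$, and $\mathrm{covol}(h)=\frac{1}{d+1}\big\langle h,A(h)\big\rangle$ is a polynomial of degree $d+1$. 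Using this polynomial to extend $\mathrm{covol}$ to all of $\R^n$ makes it $C^{\infty}$, a fortiori $C^2$. Differentiating \eqref{eq:def vol pol} gives the rough derivative
\[
D_h\mathrm{covol}(X)=\frac{1}{d+1}\left(\big\langle X,A(h)\big\rangle+\big\langle h,D_hA(X)\big\rangle\right),
\]
the exact analogue of \eqref{eq: der vol reg rough}.

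The reduction step is then identical to the regular case. If \eqref{eq: der vol pol} holds, differentiating it yields \eqref{eq: der sec vol pol}, and symmetry of the second derivative of the $C^2$ function $\mathrm{covol}$ forces \eqref{eq: A self adj}. Conversely, since $A$ is homogeneous of degree $d$, Euler's relation reads $D_hA(h)=dA(h)$; putting $Y=h$ in \eqref{eq: A self adj} gives $\big\langle h,D_hA(X)\big\rangle=d\big\langle X,A(h)\big\rangle$, and substituting this into the rough derivative above collapses it to \eqref{eq: der vol pol}. So everything reduces to proving \eqref{eq: A self adj}, i.e.\ that the Jacobian matrix $D_hA$ is symmetric.

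To prove this I would compute the off-diagonal entry $\partial A(F_i)/\partial h(j)$ ($i\neq j$ in $\I$) by the chain rule. Viewing $F_i$ as a $d$-dimensional Euclidean polytope with support numbers $h_{i,\gamma j}$, the classical polyhedral Minkowski relation (see \cite[Section~5.1]{Sch93}) gives $\partial A(F_i)/\partial h_{i,\gamma j}=A(F_{i,\gamma j})$, the $(d-1)$-area of the common codimension-$2$ face of $F_i$ and $\gamma F_j$; combined with \eqref{eq:der lor1} and the $\Gamma$-invariance $h(\gamma j)=h(j)$, this yields
\[
\frac{\partial A(F_i)}{\partial h(j)}=-\sum_{\gamma}\frac{A(F_{i,\gamma j})}{\sinh\varphi_{i,\gamma j}},
\]
the sum running over those $\gamma\in\Gamma$ with $\gamma F_j$ adjacent to $F_i$. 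The matrix is symmetric because the isometry $\gamma^{-1}$ carries the common face of $F_i$ and $\gamma F_j$ to that of $\gamma^{-1}F_i$ and $F_j$, so $A(F_{i,\gamma j})=A(F_{j,\gamma^{-1}i})$ and $\varphi_{i,\gamma j}=\varphi_{j,\gamma^{-1}i}$; the bijection $\gamma\mapsto\gamma^{-1}$ then matches the orbit sum for $\partial A(F_i)/\partial h(j)$ term by term with that for $\partial A(F_j)/\partial h(i)$. Hence $D_hA$ is symmetric and \eqref{eq: A self adj} holds.

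The main obstacle is precisely this last $\Gamma$-bookkeeping: a single representative facet $F_i$ may be adjacent to several translates $\gamma F_j$, and one must verify that in the quotient these adjacencies are counted symmetrically, which is exactly what $\gamma\mapsto\gamma^{-1}$ guarantees. As a cross-check (and an alternative that bypasses self-adjointness entirely), I would prove \eqref{eq: der vol pol} directly by a geometric ``layer'' argument parallel to the $C^2_+$ case: since the foot of the perpendicular to $\mathcal{H}_i$ is $h(i)\eta_i$, increasing $h(i)$ by $t$ displaces $\mathcal{H}_i$ by Lorentzian distance $t$ along its unit normal $\eta_i$, enlarging the region $\F\setminus K$ by a slab over $F_i$ of volume $t\,A(F_i)$ to first order; the shape changes of the neighbouring facets contribute only at order $t^2$, so $D_h\mathrm{covol}(X)=\big\langle X,A(h)\big\rangle$ follows at once.
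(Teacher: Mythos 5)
Your reduction step (Euler homogeneity of $A$ plus symmetry of $D^2\mathrm{covol}$ for a $C^2$ function) and your $\gamma\mapsto\gamma^{-1}$ bookkeeping for the symmetry of $D_hA$ are essentially the paper's argument, and the latter is in fact a more explicit version of the paper's terse remark that each adjacency in $\partial P/\Gamma$ contributes the symmetric quantity $L_{ik}/\sinh\varphi_{ik}$ to both $\partial A(F_i)/\partial h(j)$ and $\partial A(F_j)/\partial h(i)$. The genuine gap is in your first step: the claim that \emph{the combinatorial type is constant on the open cone $\mathcal{P}(\Gamma,R)$} is false. Membership in $\mathcal{P}(\Gamma,R)$ only requires that every $\eta_i$ support an actual facet; it does not fix the face lattice. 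Non-simple polyhedra (e.g.\ those generated by a single orbit, which the paper notes are never simple for $d>1$) lie in such cones, and perturbing their support numbers resolves non-simple vertices in different ways, changing the combinatorics. Consequently $\mathcal{P}(\Gamma,R)$ is subdivided into strong isomorphism classes, and $\mathrm{covol}$ is a \emph{different} polynomial on each class --- this is exactly why the paper later restricts to the subcones $[P]$ before speaking of a polynomial structure (``The coefficients of the volume depend on the combinatorics, it's why we have to restrict ourselves to simple strongly isomorphic polytopes''). So your conclusion that $A$ is a single polynomial map and $\mathrm{covol}$ is $C^\infty$ on $\R^n$ is unjustified and false in general; only $C^2$ is claimed, and only $C^2$ is true across the walls between combinatorial types.

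This false premise also undermines your chain-rule computation of $\partial A(F_i)/\partial h(j)$: on a wall where the combinatorics changes, the set of faces of $F_i$ over which you sum is itself changing, so you have not produced a derivative there at all, let alone a symmetric one. The paper's proof handles precisely this point: when varying $h(j)$, new faces of $F_i$ may appear, with support numbers $h_{ij}$ given formally by \eqref{eq: supp num mink} but with $(d-1)$-area $L_{ij}=0$; hence the Euclidean relation \eqref{eq:der areab} still gives the variation of $A(F_i)$, and the resulting expressions \eqref{eq: der par A} and \eqref{eq: coefdiag} are \emph{continuous} in $h$ across combinatorial changes. It is this zero-area observation --- absent from your proposal --- that yields $A\in C^1$, hence $\mathrm{covol}\in C^2$, and makes the symmetry argument valid everywhere on $\mathcal{P}(\Gamma,R)$ rather than only on the open subset where the combinatorics is locally constant. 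Your closing ``layer'' argument for \eqref{eq: der vol pol} has the same defect (the first-order slab computation is only clean when no faces appear or disappear), so it cannot serve as a substitute.
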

\begin{proof}

Let $P$ be the polyhedron with support function $h\in \mathcal{P}(\Gamma,R)$.
Let $F_i$ be a facet of $P$, with support numbers $h_{i1},\ldots,h_{im}$. If $V_E$ is the $d$ Euclidean volume, it is well-known
that \cite[8.2.3]{Ale05}

\begin{equation}\label{eq:der area}
\frac{\partial V_E(F_i)}{\partial h_{ik}}=L_{ik}
\end{equation}
where $L_{ik}$ is the area of the facet of $F_i$ with support number $h_{ik}$ (for $d=1$, one has$1$
instead of $L_{ik}$).
$A(F_i)$ is not exactly as $V_E(F_i)$, because it is a function of $h$, and, when varying a $h(j)$, a new facet of $F_i$ can appear, 
as well as a new support number $h_{ij}$
of $F_i$. Actually many new facets can appear, as many as hyperplanes with normals $\Gamma \eta_j$ meeting
$F_i$. One has to consider $F_i$ as also supported by $h_{ij}$ (and eventually some orbits).
In this case, $L_{ij}=0$, and the variation of the volume is still given by formula \eqref{eq:der area}. 
So even if the combinatorics of $P$ changes under small change of a support number, there is no contribution
to the change of the volume of the facets. So \eqref{eq:der area}
gives 
\begin{equation}\label{eq:der areab}
\frac{\partial A(F_i)}{\partial h_{ik}}=L_{ik}.
\end{equation}

We denote by $E_i^j\subset\Gamma\mathcal{I}$ is the set of indices $k\in\Gamma j$ such that $F_k$ 
 is adjacent to $F_i$ along a codimension $2$ face. It can be empty. But for example 
if $\mathcal{I}$ is reduced to a single element $i$, $E_i^i$ is the set of facets adjacent to $F_i$
along a codimension $2$ face.
If $j\in\mathcal{I}\setminus\{i\}$ we get
$$\frac{\partial A(F_i)}{\partial h(j)}=\sum_{k\in E_i^j}\frac{\partial A(F_i)}{\partial h_{ik}}\frac{\partial h_{ik}}{\partial h(j)}$$
From \eqref{eq:der lor1} and \eqref{eq:der areab} it follows that 
\begin{equation}\label{eq: der par A}
\frac{\partial A(F_i)}{\partial h(j)}=-\sum_{k\in E_i^j}\frac{L_{ik}}{\sinh \varphi_{ik}}.
\end{equation}
For the diagonal terms:
\begin{eqnarray}\label{eq: coefdiag}
\ \frac{\partial A(F_i)}{\partial h(i)}&=&\sum_{j\in\mathcal{I}\setminus\{i\}}\sum_{k\in E_i^j} \frac{\partial A(F_i)}{\partial h_{ik}}\frac{\partial h_{ik}}{\partial h(i)}
+\sum_{k\in E_i^i}
 \frac{\partial A(F_i)}{\partial h_{ik}}\frac{\partial h_{ik}}{\partial h(i)}\nonumber \\
\ &\stackrel{(\ref{eq:der areab},\ref{eq:der lor2},\ref{eq:der lor3})}{=}&\sum_{j\in\mathcal{I}\setminus\{i\}}
\sum_{k\in E_i^j}\cosh \varphi_{ik}\frac{ L_{ik}}{\sinh \varphi_{ik}} +
 \sum_{k\in E_i^i} L_{ik}\frac{\cosh \varphi_{ik}-1}{\sinh \varphi_{ik}}.
\end{eqnarray} 
These expressions are continuous with respect to $h$, even if the combinatorics changes. So $A$ is $C^1$ and from
\eqref{eq:def vol pol} $\mathrm{covol}$ is $C^2$.

If \eqref{eq: der vol pol} is true, we get \eqref{eq: der sec vol pol}, and this expression
is symmetric as $\mathrm{covol}$ is $C^2$, so \eqref{eq: A self adj} holds.
Let us suppose that \eqref{eq: A self adj} is true. As made of volumes of convex polytopes of $\R^d$, $A$
 is homogeneous of degree $d$ so by 
Euler homogeneous theorem $D_hA (h)=dA(h)$. Using this
in \eqref{eq: A self adj} with $Y=h$ gives
$d\big\langle X, A (h)\big\rangle= \big\langle h, D_hA(X)\big\rangle.$
Now differentiating \eqref{eq:def vol pol} gives
$D_{h}\mathrm{covol}(X)=\frac{1}{d+1}\big\langle X,A(h) \big\rangle+\frac{1}{d+1}\big\langle h,D_hA(X) \big\rangle$. Inserting
the preceding equation leads to $\eqref{eq: der vol pol}$.

Let us prove \eqref{eq: A self adj}. If $e_1,\ldots,e_n$ is the standard basis of $\R^n$, it suffices to prove 
\eqref{eq: A self adj}
for $X=e_i$ and $Y=e_j$, $i\not= j$ i.e~that the gradient of $A$ is symmetric.
The sum in \eqref{eq: der par A} means that, in $\partial P/\Gamma$, each times the $i$th polytope meets the $j$th polytope along a codimension $2$
face, we add the quantity $\frac{L_{ik}}{\sinh \varphi_{ik}}$, which is symmetric in its arguments. 
Hence the gradient of $A$ is symmetric.
\end{proof}

Let us consider the simplest case of $\Gamma$-convex polyhedra in the Minkowski plane, 
with only one support number $h\in\R$.
Then by \eqref{eq: supp num mink}
$\mathrm{covol}(h)$ is equal to $h^2$ times a positive number, in particular it is  
a strictly convex function.
This is always true.

\begin{theorem}\label{thm: hess pos}
The Hessian of $\mathrm{covol}: \R^n \rightarrow \R$ is positive definite. 
\end{theorem}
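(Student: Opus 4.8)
The plan is to write the Hessian $D_h^2\mathrm{covol}$ explicitly as a sum indexed by the codimension~$2$ faces of $P/\Gamma$, and to observe that, because the hyperbolic law of cosines \eqref{eq: supp num mink} produces a $\cosh\varphi_{ij}$ rather than a $\cos$, each summand is already a positive definite quadratic form in two variables. This is exactly the point where the Fuchsian situation becomes strictly simpler than the Euclidean one.

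First I would start from \eqref{eq: der sec vol pol}, giving $D_h^2\mathrm{covol}(X,X)=\langle X,D_hA(X)\rangle=\sum_{i\in\I}X_i\,(D_hA(X))_i$, and I would compute $(D_hA(X))_i$ directly rather than assembling the matrix entrywise. Extend $X$ to the $\Gamma$-periodic vector $\tilde X$ on $\Gamma\I$ (so $\tilde X_{\gamma j}=X_j$). By \eqref{eq: supp num mink} the variation of the Euclidean support number $h_{ik}$ of $F_i$ induced by $X$ is
\[
\delta h_{ik}=\frac{\cosh\varphi_{ik}\,X_i-\tilde X_k}{\sinh\varphi_{ik}},
\]
and since $\partial A(F_i)/\partial h_{ik}=L_{ik}$ by \eqref{eq:der areab}, one gets $(D_hA(X))_i=\sum_{k}L_{ik}\,\delta h_{ik}$, the sum being over the facets $F_k$ adjacent to $F_i$ along a codimension~$2$ face. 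This merely repackages \eqref{eq: der par A} and \eqref{eq: coefdiag} into a single formula.

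Next I would reorganise the double sum $\sum_{i\in\I}X_i\sum_{k}L_{ik}\,\delta h_{ik}$ by grouping the two orientations of each codimension~$2$ face $F_{ij}$ of $P/\Gamma$. Using $\varphi_{ij}=\varphi_{ji}$, $L_{ij}=L_{ji}$ and their $\Gamma$-invariance, a single unoriented codimension~$2$ face contributes $\frac{L_{ij}}{\sinh\varphi_{ij}}\bigl(\cosh\varphi_{ij}(X_i^2+X_j^2)-2X_iX_j\bigr)$, so that
\[
D_h^2\mathrm{covol}(X,X)=\sum_{F_{ij}}\frac{L_{ij}}{\sinh\varphi_{ij}}\bigl(\cosh\varphi_{ij}(X_i^2+X_j^2)-2X_iX_j\bigr).
\]
The self-adjacent faces, where $F_i$ meets $\gamma F_i$, are precisely those requiring \eqref{eq:der lor3}; one checks they obey the same formula with $X_i=X_j$, the bracket reducing to $2X_i^2(\cosh\varphi_{ij}-1)$.

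Finally I would conclude. The symmetric $2\times2$ matrix with diagonal entries $\cosh\varphi_{ij}$ and off-diagonal entries $-1$ has eigenvalues $\cosh\varphi_{ij}\pm1$, both strictly positive since $\varphi_{ij}>0$ for pairwise non-collinear normals; together with $L_{ij}>0$ and $\sinh\varphi_{ij}>0$ this makes every summand nonnegative, and strictly positive once $(X_i,X_j)\neq(0,0)$. Hence for $X\neq0$ one may choose $i$ with $X_i\neq0$, and any codimension~$2$ face incident to $F_i$ already contributes a strictly positive term, whence $D_h^2\mathrm{covol}(X,X)>0$. The genuine work here is not analytic but combinatorial bookkeeping: making the reindexing by codimension~$2$ faces of the quotient rigorous, and in particular accounting correctly for self-adjacencies through \eqref{eq:der lor3}. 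The contrast with the Euclidean case is instructive: there the analogous bracket features $\cos\omega_{ijk}$, the corresponding matrix has the negative eigenvalue $\cos\omega_{ijk}-1$, termwise positivity fails, and one is forced into the global Alexandrov--Fenchel machinery; here positive definiteness comes for free, which is precisely the origin of the reversed inequalities.
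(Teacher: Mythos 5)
Your proof is correct, and it reaches the conclusion by a different final argument than the paper, although the computational core is identical. The paper also starts from \eqref{eq: der sec vol pol} and the entries of the Jacobian of $A$ --- its formulas \eqref{eq: der par A} and \eqref{eq: coefdiag} are exactly your $\sum_k L_{ik}\,\delta h_{ik}$ split into off-diagonal and diagonal parts --- but it then argues at the matrix level: the off-diagonal entries are non-positive, the diagonal ones positive, and since $\cosh\varphi_{ik}>1$ each row satisfies the strict dominance $\frac{\partial A(F_i)}{\partial h(i)}>\sum_{j\neq i}\bigl|\frac{\partial A(F_i)}{\partial h(j)}\bigr|$, whence positive definiteness by citing a standard theorem on strictly diagonally dominant symmetric matrices \cite[1.22]{Var00}. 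Your regrouping by codimension-$2$ faces of $P/\Gamma$ replaces that citation with a direct verification: for a symmetric matrix with non-positive off-diagonal entries, the face-by-face splitting into the binary forms $\cosh\varphi_{ij}(X_i^2+X_j^2)-2X_iX_j$, with eigenvalues $\cosh\varphi_{ij}\pm1$, is precisely what the dominance criterion encodes, so your argument is in effect a self-contained proof of the linear-algebra step the paper delegates. What your version buys is transparency --- the positivity (and the contrast with the Euclidean case, where the analogous eigenvalue $\cos\omega_{ijk}-1$ is negative) is localized on each codimension-$2$ face; what the paper's buys is brevity, since it reuses its displayed formulas verbatim. Your bookkeeping for self-adjacencies is right: the faces $F_i\cap\gamma F_i$ and $F_i\cap\gamma^{-1}F_i$ are distinct faces of $P$ lying in a single $\Gamma$-orbit, so each quotient face is counted twice, giving the bracket $2X_i^2(\cosh\varphi_{ij}-1)$ consistent with \eqref{eq:der lor3}. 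Two small points you should make explicit to close the argument: strict positivity needs every facet to be incident to at least one codimension-$2$ face with $L_{ij}>0$, which holds because facets are compact convex polytopes (the paper's remark that any facet has an adjacent facet); and $\varphi_{ij}>0$ also for self-adjacent pairs, because the normals there are $\eta_i$ and $\gamma\eta_i$, which are distinct points of $\H^d$ since $\Gamma$ acts freely.
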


Recall that we are looking at the covolume  on a space of support vectors, and not on
a space of polyhedra (the sum is not the same). 

\begin{proof}
Due to \eqref{eq: der sec vol pol} it suffices to study the Jacobian of $A$.
The elements off the diagonal are non-positive due to
\eqref{eq: der par A}. Note that the formula is also correct if  $E_i^j$ is empty. 
The diagonal terms \eqref{eq: coefdiag} are positive, as any facet $F_i$ has an adjacent facet. 
As $\cosh x>1$ for $x\not= 0$, \eqref{eq: coefdiag} and \eqref{eq: der par A} lead to
$$\frac{\partial A(F_i)}{\partial h(i)}  > \sum_{j\in\mathcal{I}\setminus\{i\}} \left|\frac{\partial A(F_i)}{\partial h(j)} \right|>0$$
that means that the Jacobian is strictly diagonally dominant
with positive diagonal entries, hence positive definite, see for example  \cite[1.22]{Var00}. 
\end{proof}

\subsection{Polyhedral Minkowski Theorem}\label{sec: mink}

We use a classical continuity method, although its Euclidean analog is more
often proved using a variational method.

\begin{theorem}[Minkowski  Theorem]\label{thm:minkex}
Let $\Gamma$ be a Fuchsian group, 
 $R=(\eta_1,\ldots,\eta_n)$ be a set of pairwise non collinear unit future time-like vectors 
 of the Minkowski space contained in
a fundamental domain of $\Gamma$,  and let $(f_1,\ldots,f_n)$ be  positive real numbers.

 There exists a unique $\Gamma$-convex polyhedron with inward unit normals $\eta_i$ such that the facet
orthogonal to $\eta_i$ has area $f_i$.
\end{theorem}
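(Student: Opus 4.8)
The plan is to recast the statement as a bijectivity property of the area map and to prove it by the continuity (degree) method, exactly as announced, the engine being the positive-definiteness of the Hessian of the covolume established in Theorem~\ref{thm: hess pos}.

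\textbf{Setup.} I identify a $\Gamma$-convex polyhedron whose inward unit normals lie among $\Gamma R$ with its support vector $h\in U:=\mathcal{P}(\Gamma,R)$, which by Lemma~\ref{lem:ens pol} is a non-empty open convex cone; moreover $U\subset(\R_{>0})^n$, since the support function of a $\Gamma$-convex body is negative. Let $A(h)=(A(F_1),\dots,A(F_n))$ be the vector of facet areas. By Lemma~\ref{lem: der vol pom}, $A=\nabla\,\mathrm{covol}$ is $C^1$, and its differential $D_hA=\mathrm{Hess}_h\,\mathrm{covol}$ is, by Theorem~\ref{thm: hess pos}, positive definite at every $h\in U$. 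Thus the theorem is equivalent to the assertion that $A\colon U\to(\R_{>0})^n$ is a bijection, the facet orthogonal to $\eta_i$ having the prescribed area $f_i$ meaning precisely $A(h)=(f_1,\dots,f_n)$.

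\textbf{Uniqueness.} Since $U$ is convex and $\mathrm{covol}$ is strictly convex there (its Hessian being positive definite), its gradient $A$ is strictly monotone, hence injective: were $A(h)=A(h')$ with $h\neq h'$, the function $t\mapsto\mathrm{covol}\big(h+t(h'-h)\big)$ would have equal derivatives at $t=0$ and $t=1$, contradicting strict convexity. This yields the uniqueness claim.

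\textbf{Existence.} Because $D_hA$ is everywhere invertible, $A$ is a local diffeomorphism; in particular $A(U)$ is open in $(\R_{>0})^n$, and it is non-empty (it contains $A(1,\dots,1)$). As $(\R_{>0})^n$ is connected, it suffices to show that $A(U)$ is also closed in $(\R_{>0})^n$, equivalently that $A$ is proper onto $(\R_{>0})^n$: for every compact $C\subset(\R_{>0})^n$ the set $A^{-1}(C)$ is compact in $U$. Concretely, if $h_k\in U$ with $A(h_k)\to f$ and all $f_i>0$, I must extract a subsequence converging to some $\bar h\in U$; continuity of $A$ then gives $A(\bar h)=f$, and openness together with closedness and connectedness forces $A(U)=(\R_{>0})^n$.

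\textbf{A priori estimates (the crux).} The whole difficulty is concentrated in this properness, i.e.\ in two geometric bounds on the support vectors $h_k$. First, the support numbers must be bounded above: by the reverse Cauchy--Schwarz inequality for future time-like vectors (in the spirit of Sublemma~\ref{lem:elementary}), every vertex $v$ lying on a facet $F_i$ satisfies $\sqrt{-\langle v,v\rangle_-}\le h_k(i)$, so the bodies are comparable with the pseudo-spheres $B^d_t$; combined with the monotonicity $K\subset L\Rightarrow\mathrm{covol}(K)\ge\mathrm{covol}(L)$ and the identity $(d+1)\mathrm{covol}(h_k)=\sum_i h_k(i)A(F_i)$ from \eqref{eq:def vol pol}, a support number tending to $+\infty$ would force the corresponding facet area to blow up, contradicting $A(h_k)\to f$. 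Second, no support number may tend to $0$ and no facet may collapse, since either degeneration would drive some facet area to $0$, against $f_i>0$; quantitatively this is read off from the explicit dependence of the $h_{ij}$, hence of the areas $A(F_i)$, on the support numbers through \eqref{eq: supp num mink} and the area derivatives computed in Lemma~\ref{lem: der vol pom}. Together these bounds keep $\{h_k\}$ in a compact subset of the open cone $U$, which is exactly properness. I expect this two-sided control of the support numbers by the prescribed areas to be the main obstacle; everything else is a formal consequence of the positive-definiteness of $\mathrm{Hess}\,\mathrm{covol}$.
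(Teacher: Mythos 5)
Your architecture is the same as the paper's: identify polyhedra with their support vectors, observe that $A=\nabla\,\mathrm{covol}$ is $C^1$ with everywhere positive-definite differential (Lemma~\ref{lem: der vol pom}, Theorem~\ref{thm: hess pos}), deduce local invertibility, and combine openness of the image with properness and connectedness of $(\R_{>0})^n$ to get surjectivity. Your uniqueness argument is a genuinely different and cleaner route than the paper's: you use strict monotonicity of the gradient of the strictly convex function $\mathrm{covol}$ on the convex cone $\mathcal{P}(\Gamma,R)$, whereas the paper deduces injectivity from covering-space theory (a proper local homeomorphism onto the simply connected $(\R_{>0})^n$ from a connected domain is a homeomorphism). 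That part of your proposal is correct and self-contained.

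The gap is in properness, which you yourself call the crux but do not prove, and the mechanism you sketch does not close as stated. The inequality $\sqrt{-\langle v,v\rangle_-}\le h_k(i)$ for vertices $v$ of $F_i$ bounds Lorentzian norms from \emph{above}; to exploit the monotonicity $K\subset L\Rightarrow\mathrm{covol}(K)\ge\mathrm{covol}(L)$ you would need to enclose $K_k$ in an explicit body whose covolume grows like $h_k(i)^{d+1}$ (for instance the one-orbit polyhedron determined by $\Gamma\eta_i$ with support number $h_k(i)$, using homogeneity of the covolume), and your inequality points the wrong way for any such containment. Moreover, the conclusion you assert --- that a support number tending to $+\infty$ forces ``the corresponding facet area to blow up'' --- is precisely the nontrivial estimate to be proven, and it does not follow from the ingredients you list; it requires the quantitative consequence of cocompactness that never appears in your proposal, namely that the hyperbolic distances $\varphi_{ij}$ between distinct normals in $\Gamma R$ are bounded below by a positive constant. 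The paper's Lemma~\ref{prop:proprete} takes $i$ to be the index of the \emph{largest} support number, so that \eqref{eq: supp num mink} yields $h_{ij}^{\alpha}\ge h_{\alpha}(i)\,\frac{\cosh\varphi_{ij}-1}{\sinh\varphi_{ij}}\ge \lambda_i h_{\alpha}(i)$ with $\lambda_i>0$ by cocompactness, and then the isoperimetric inequality bounds the total $(d-1)$-volume $\sum_j L^{\alpha}_{ij}$ below, giving $A(F_i^{\alpha})\ge h_{\alpha}(i)\lambda_i\,\mathrm{Per}_i/d\to\infty$, contradicting convergence of the areas. Similarly, your claim that no support number can tend to $0$ is asserted rather than derived: the paper first uses the upper bound to extract a convergent subsequence, and only then shows via \eqref{eq: supp num mink} that if $h_{\phi(\alpha)}(i)\to 0$ while the neighboring limits are non-negative, then \emph{all} Euclidean support numbers of $F_i$ become non-positive, so its area cannot stay near $f_i>0$. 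Without these two estimates carried out, your proof is missing its decisive step.
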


Theorem~\ref{thm:minkex} is equivalent to say that the map $\Phi$ from 
$\mathcal{P}(\Gamma,R)$ to $(\mathbb{R}_+)^n$ which associates to each
$(h_1,\ldots,h_n)\in \mathcal{P}(\Gamma,R)$ the  facet areas $(A(F_1),\ldots,A(F_n))$   is a bijection.
By Lemma~\ref{lem: der vol pom}, Theorem~\ref{thm: hess pos} and  
 local inverse theorem,   $\Phi$  is  locally invertible.
So $\Phi$ is a local homeomorphism by the invariance of domain theorem.
Lemma~\ref{prop:proprete} below says that $\Phi$ is proper. As $(\mathbb{R}_+)^n$ is connected,
it follows that $\Phi$ is surjective, hence a covering map.
But the target space $(\mathbb{R}_+)^n$ is  simply connected and $\mathcal{P}(\Gamma,R)$
is connected (Lemma~\ref{lem:ens pol}), so $\Phi$ is a homeomorphism, in particular bijective, and Theorem~\ref{thm:minkex} is proved.

\begin{lemma}\label{prop:proprete}
 The map $\Phi$ is proper: Let $(a_\alpha)_{\alpha\in\mathbb{N}}$  be a converging sequence of  $(\mathbb{R}_+)^n$
such that for all $\alpha$, there exists $h_\alpha=(h_{\alpha}(1),\ldots,h_{\alpha}(n))\in \mathcal{P}(\Gamma,R)$ with 
$\Phi(h_\alpha)=a_\alpha$. Then
 a subsequence of $(h_\alpha)_\alpha$ converges in $\mathcal{P}(\Gamma,R)$.
\end{lemma}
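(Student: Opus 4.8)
The plan is to prove properness by the usual two-step compactness argument: first show that the support vectors $h_\alpha$ are bounded in $\R^n$, and then show that any accumulation point lies in the \emph{open} cone $\mathcal{P}(\Gamma,R)$ rather than on its boundary. Throughout I use two structural features of the area map $A=\Phi$. First, each component $A(F_i)$ is homogeneous of degree $d$ in $h$, since the facets are $d$-dimensional Euclidean polytopes (as used already in the proof of Lemma~\ref{lem: der vol pom}). Second, $A$ extends continuously to the closure $\overline{\mathcal{P}(\Gamma,R)}$: the facet $F_i(h)=\{x\in K(h):\langle x,\eta_i\rangle_-=-h(i)\}$ is cut out, locally uniformly in $h$, by finitely many of the half-spaces bounded by $(\gamma\eta_j)^{\bot}$ (by the local finiteness of $\Gamma$-convex polyhedra established above), and the Euclidean volume of a convex polytope depends continuously on the offsets of its defining inequalities; on the boundary of the cone the degenerating facets simply acquire zero area. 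I also record that, since $a_\alpha\to a$ inside $(\R_+)^n$, there are constants $0<m\le M$ with $m\le a_\alpha(i)\le M$ for all $i$ and all large $\alpha$.

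For boundedness, suppose on the contrary that $\|h_\alpha\|\to\infty$ along a subsequence. Write $h_\alpha=\lambda_\alpha u_\alpha$ with $\lambda_\alpha=\|h_\alpha\|\to\infty$ and $u_\alpha$ a unit support vector, and pass to a further subsequence with $u_\alpha\to u$, $\|u\|=1$. The entries of $u$ are nonnegative and some $u(i)>0$, so the associated $\Gamma$-invariant sublinear support function takes the negative value $-u(i)$ at $\eta_i$ and is not identically $0$; by the dichotomy of Sublemma~\ref{sub: cl} it is then strictly negative on $\mathcal{F}$, so $u$ is the support vector of a genuine $\Gamma$-convex polyhedron $K(u)$. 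By Lemma~\ref{lem: future convex}\ref{nonvide} this body is genuinely $(d+1)$-dimensional, so its boundary carries positive $d$-dimensional area and $A(F_{i_0}(u))>0$ for some index $i_0$. By continuity of $A$ we get $A(F_{i_0}(u_\alpha))\to A(F_{i_0}(u))>0$, hence $A(F_{i_0}(u_\alpha))\ge c$ for a constant $c>0$ along the subsequence. Homogeneity then gives $a_\alpha(i_0)=A(F_{i_0}(h_\alpha))=\lambda_\alpha^{d}A(F_{i_0}(u_\alpha))\ge c\,\lambda_\alpha^{d}\to\infty$, contradicting $a_\alpha(i_0)\le M$. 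Therefore $(h_\alpha)$ is bounded, and some subsequence converges to a vector $h_*\in\overline{\mathcal{P}(\Gamma,R)}$.

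It remains to check $h_*\in\mathcal{P}(\Gamma,R)$, i.e.\ that no prescribed facet has degenerated in the limit. By continuity of $A$ up to the boundary, $A(F_i(h_*))=\lim_\alpha a_\alpha(i)=a(i)>0$ for every $i$. Thus each of the $n$ normals $\eta_i$ still supports a facet of positive area, so $h_*$ lies in the open cone and the chosen subsequence of $(h_\alpha)$ converges in $\mathcal{P}(\Gamma,R)$, as required. The only delicate point is the continuity of the area map at the boundary of $\mathcal{P}(\Gamma,R)$, where a facet can change combinatorial type or disappear; I expect the local uniform finiteness of the hyperplanes meeting a given facet (a consequence of the local finiteness of $\Gamma$-convex polyhedra) to be the main technical ingredient, everything else being soft compactness and homogeneity.
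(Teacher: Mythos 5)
Your two-step outline (bound the support vectors, then rule out the boundary of the cone) matches the paper's, but both of your steps rest entirely on a lemma you never prove: the continuity of the extended facet-area map $A$ on the closure of $\mathcal{P}(\Gamma,R)$, including at vectors $u$ with some vanishing entries. You flag this yourself as ``the only delicate point,'' and it is a genuine gap, not a routine verification. Here is why it is delicate. For convex bodies, the area of the face with a \emph{prescribed} normal is not continuous under Hausdorff convergence; it is only upper semicontinuous. (In the plane, let $K_n$ be the convex hull of $[0,1]^2$ and the point $(1/2,1+1/n)$: the face with outward normal $(0,1)$ has length $0$ for every $n$, but length $1$ for the limit body $[0,1]^2$.) Your Step 1 needs precisely the inequality that fails in this example, namely a \emph{lower} bound $\liminf_\alpha A(F_{i_0}(u_\alpha))\geq A(F_{i_0}(u))>0$. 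So the continuity you invoke must come from the special structure of the problem (fixed normals $\Gamma\eta_i$, varying offsets), and establishing it requires a uniform local-finiteness statement exactly at the points where you need it: when $u_\alpha\to u$ with $u(j)=0$ for some $j$, the infinitely many constraints $\langle x,\gamma\eta_j\rangle_-\leq -u_\alpha(j)$, $\gamma\in\Gamma$, have offsets tending to $0$, and one must show they are uniformly inactive near the facet $F_{i_0}$ --- for instance via the fact that $\sup_{x\in C}\langle x,\gamma\eta_j\rangle_-\to-\infty$ uniformly on compact $C\subset\F$ as $\gamma\to\infty$, a properness/cocompactness argument of the same weight as the estimate you are trying to avoid. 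Until that lemma is proved, neither the boundedness step nor the interiority step is supported.

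For comparison, the paper's proof bypasses any continuity-of-area statement by direct quantitative estimates: taking $i$ with $h_\alpha(i)$ maximal, formula \eqref{eq: supp num mink} together with the cocompactness lower bound on the distances $\varphi_{ij}$ gives $h^{\alpha}_{ij}\geq \lambda_i h_\alpha(i)$ for every facet adjacent to $F_i^\alpha$, and the isoperimetric inequality then yields $A(F_i^\alpha)\geq h_\alpha(i)\lambda_i\,\mathrm{Per}_i/d$, i.e.\ an explicit upper bound on $h_\alpha(i)$ in terms of the bounded data $a_\alpha$; interiority is obtained by observing that if $h_{\phi(\alpha)}(i)\to 0$ then by \eqref{eq: supp num mink} all support numbers of $F_i^{\phi(\alpha)}$ have non-positive limits, so this facet degenerates, contradicting $a(i)>0$. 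If you wish to keep your softer blow-up argument, the honest route is to state and prove the continuity of the extended area map (fixed normals, dominated convergence for the indicators of the faces, plus the uniform finiteness above); otherwise, replace Step 1 by the paper's estimate and Step 2 by its sign argument, which require no continuity at degenerate vectors at all.
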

\begin{proof}
Let $\alpha\in\N$ and suppose that $h_{\alpha}(i)$ is the largest component of $h_{\alpha}$.
For any  support number $h_{\alpha}(j)$, $j\in\Gamma\mathcal{I}$,  of a facet adjacent to the one supported by 
$h_{\alpha}(i)$, as
$h_{\alpha}(i)\geq h_{\alpha}(j)$,
\eqref{eq: supp num mink} gives:
$$h_{ij}^{\alpha}=\frac{h_{\alpha}(i)\cosh \varphi_{ij}-h_{\alpha}(j)}{\sinh \varphi_{ij}}\geq h_{\alpha}(i)
 \frac{\cosh\varphi_{ij}-1}{\sinh \varphi_{ij}}.$$

As 
$\Gamma$ acts cocompactly on $\H^d$, for any $j\in\Gamma\mathcal{I}$,  
$\varphi_{ij}$ is bounded from below by a positive constant. 
Moreover the function
$x\mapsto  \frac{\cosh x-1}{\sinh x} $ is increasing, then there exists a positive number $\lambda_i$, depending only on $i$, such that
$$h_{ij}^{\alpha}\geq h_{\alpha}(i) \lambda_i.$$ 

As the sequence of areas of the facets is supposed to converge,
there exists positive numbers $A^+_i$ and  $A^-_i$
such that   $A^+_i\geq A(F_i^{\alpha}) \geq A^-_i$, where
$A(F_i^{\alpha})$ is the area  of the facet $F_i^{\alpha}$ supported by $h_{\alpha}(i)$. 
If $\mbox{Per}_i^{\alpha}$ (resp. $\mbox{Per}_i$) is the 
Euclidean $(d-1)$ volume of the hypersphere bounding the ball  with Euclidean $d$ volume $A(F_i^{\alpha})$ (resp. $A_i^-$), 
the isoperimetric inequality gives \cite[10.1]{BZ88}
$$\sum_j L_{ij}^{\alpha} \geq  \mbox{Per}_i^{\alpha} \geq \mbox{Per}_i,$$
where the sum is on the facets adjacent to $F_i^{\alpha}$ and $L_{ij}^{\alpha}$ is the $(d-1)$ volume of the codimension $2$ face between 
$F_i^{\alpha}$ and $F_j^{\alpha}$.
We get
$$A^+_i \geq A(F_i^{\alpha})=\frac{1}{d}\sum_j h_{ij}^{\alpha}L_{ij}^{\alpha} \geq h_{\alpha}(i) \lambda_i   
\frac{1}{d}\sum_j L_{ij}^{\alpha}  \geq h_{\alpha}(i) \frac{\lambda_i  \mbox{Per}_i}{d}.$$

As $h_{\alpha}(i)$ is the largest component of $h_{\alpha}$, all the support numbers are bounded
from above by a constant which does not depend on $\alpha$.
Moreover each component of $h_{\alpha}$ is positive, hence all the components of the elements of 
the sequence $(h_{\alpha})_{\alpha}$ are bounded from above and below,
so there exists a  subsequence $(h_{\phi(\alpha)})_{\phi(\alpha)}$ converging to $(h(1),\ldots,h(n))$, where $h(i)$ is
a non-negative number. 

Suppose that the limit of $(h_{\phi(\alpha)}(i))_{\phi(\alpha)}$ is zero. 
 Let $h_{\phi(\alpha)}(j)$ be the support number of a facet adjacent to $F_i^{\phi(\alpha)}$. 
If $\phi(\alpha)$ is sufficiently large, $h_{\phi(\alpha)}(j)$ is arbitrary close to $h(j)$, which is a non-negative number,
 and $h_{\phi(\alpha)}(i)$ is arbitrary close to $0$. By \eqref{eq: supp num mink},
$h_{ij}^{\alpha}$ is a non-positive number. So all the support numbers of $F_i^{\phi(\alpha)}$ are non-positive, 
hence  the $d$ volume of $F_i^{\phi(\alpha)}$ is non-positive, that
is impossible.
It follows easily that $(h_{\phi(\alpha)}(i))_{\phi(\alpha)}$ converges in $\mathcal{P}(\Gamma,R)$.
\end{proof}

\subsection{Mixed face area and mixed-covolume}

 Let us recall some basic facts about convex polytopes in Euclidean space (with non empty interior).
A convex polytope of $\mathbb{R}^d$ is  \emph{simple} 
if each vertex is contained in exactly $d$ facets. Each face of a simple convex polytope is 
a simple convex polytope. 
The \emph{normal fan} of a convex polytope is the decomposition of $\mathbb{R}^d$
by convex cones defined by the outward unit normals to the facets of the polytope
(each cone corresponds to one vertex).
Two convex polytopes are  \emph{strongly isomorphic}
if they have the same normal fan.
 The Minkowski sum of two  strongly isomorphic simple polytopes 
is a simple polytope strongly isomorphic to the previous ones.
Moreover the support vector of the Minkowski sum is the sum of the support vectors.

Let $Q$ be a simple convex polytope 
in $\R^d$ with $n$ facets. The set of convex polytopes of $\R^d$ strongly isomorphic 
to $Q$
 is a convex open cone in $\R^n$. The Euclidean volume $V_E$ is a 
polynomial of degree $d$ on this set, and its polarization 
$V_E(\cdot,\ldots,\cdot)$ is  the \emph{mixed-volume}.
The coefficients of the volume depend on the combinatorics, it's 
why we have to restrict ourselves to simple strongly isomorphic polytopes.
The following result is an equivalent formulation of the Alexandrov--Fenchel 
inequality.

\begin{theorem}[{\cite{Ale96,Sch93}}]\label{thm: AF eucl}
 Let $Q,Q_3,\ldots,Q_d$ be strongly isomorphic simple convex polytopes of $\R^d$ with $n$ facets
 and $Z\in\R^n$. Then
$$V_E(Z,Q,Q_3,\ldots,Q_d)=0  \Rightarrow V_E(Z,Z,Q_3,\ldots,Q_d)\leq 0$$
 and equality holds
if and only if $Z$ is the support vector of a point.
\end{theorem}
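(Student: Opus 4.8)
The plan is to recast the implication as a statement about the \emph{signature} of the symmetric bilinear form
$$B(X,Y):=V_E(X,Y,Q_3,\ldots,Q_d)$$
on $\R^n$, where $\R^n$ is the ambient space of support vectors of polytopes strongly isomorphic to $Q$. The theorem is equivalent to the assertion that $B$ has exactly one positive eigenvalue and that $\ker B$ is precisely the $d$-dimensional space $T$ of support vectors of points (the vectors $(\langle\eta_i,p\rangle)_i$ for $p\in\R^d$, which lie in $\ker B$ by translation invariance of mixed volume). Granting this signature, the conclusion is pure linear algebra: since $B(Q,Q)=V_E(Q,\ldots,Q)>0$, the vector $Q$ spans a positive direction, so $B$ restricted to the $B$-orthogonal complement $Q^{\perp}$ is negative semidefinite; any $Z$ with $B(Z,Q)=0$ lies in $Q^{\perp}$, whence $B(Z,Z)\le 0$, with equality exactly when $Z\in\ker B=T$, i.e.~when $Z$ is the support vector of a point. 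So the whole problem is to pin down the signature of $B$.

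First I would record the structural shape of $B$. Writing $B(X,Y)=\sum_{i,j}a_{ij}x_iy_j$, the off-diagonal entries $a_{ij}$ ($i\ne j$) are, up to positive factors, mixed volumes of the common faces $F_i\cap F_j$, hence nonnegative, and strictly positive exactly when the facets $i$ and $j$ are adjacent; translation invariance gives the relations $\sum_j a_{ij}\langle\eta_j,p\rangle=0$ for all $p\in\R^d$, which fix the (negative) diagonal and show $T\subset\ker B$. Because the facet--adjacency graph of a polytope is connected, the matrix $(a_{ij})$ is irreducible with nonnegative off-diagonal part; applying Perron--Frobenius to $(a_{ij})+cI$ for large $c$ shows that the top eigenvalue of $B$ is simple, and $B(Q,Q)>0$ shows it is positive. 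This delivers the easy half: there is \emph{at least} one positive eigenvalue and it is simple.

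The substance is to show there is \emph{at most} one positive eigenvalue and that $\ker B=T$ exactly. Here I would run a continuity argument in the spirit of the one used for Theorem~\ref{thm:hess vol def pos reg}: the strongly isomorphic simple tuples $(Q_3,\ldots,Q_d)$ form a connected cone, the eigenvalues of $B$ move continuously (indeed analytically) along any deformation, and the count of positive eigenvalues cannot change as long as $\dim\ker B$ stays equal to $d$ --- an eigenvalue can migrate across $0$ only by enlarging the kernel. One deforms the fixed bodies to a reference tuple, reducing one body to a segment so as to pass to a facet and invoke an induction on $d$; the base case $d=2$ is the elementary Minkowski inequality for the mixed area of two convex polygons, where the equality analysis is done directly. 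It is worth contrasting this with the smooth setting: there the global inequality \ref{thm:hess vol def pos reg} was reduced \emph{pointwise} to the purely matrix statement \ref{thm:alg lin det}, whereas in the polytopal world there is no pointwise Hessian to exploit, so the argument must be self-contained and combinatorial.

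The hard part is exactly maintaining $\dim\ker B=d$ throughout the deformation, equivalently ruling out that a second positive eigenvalue slides down to $0$ at some intermediate tuple. This is the genuine content of Alexandrov--Fenchel: it is not formal and cannot be absorbed into the linear-algebra bookkeeping above. It is settled by combining the sign pattern and irreducibility of $(a_{ij})$ with the induction hypothesis through a maximum-principle/uniqueness argument, which simultaneously forces $\ker B=T$ and thereby locks the positive index at $1$ along the entire path back to the base case.
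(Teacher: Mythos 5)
First, some context: the paper does not prove Theorem~\ref{thm: AF eucl} at all --- it quotes it from \cite{Ale96,Sch93}, since it is exactly Alexandrov's polytopal formulation of the Alexandrov--Fenchel inequality. So your proposal must stand on its own, and its architecture is indeed that of the classical proof in the cited references. The parts you carry out are correct: the equivalence of the theorem with the signature statement (exactly one positive eigenvalue for $B(X,Y)=V_E(X,Y,Q_3,\ldots,Q_d)$ and $\ker B=T$, the space of support vectors of points), the Cauchy--Schwarz-type linear algebra on the $B$-orthogonal complement of $Q$, the inclusion $T\subset\ker B$ by translation invariance, and the Perron--Frobenius argument giving simplicity of the top eigenvalue. (One slip: the diagonal entries of $(a_{ij})$ need not be negative --- for triangles in the plane they are positive --- but this is harmless, since your argument only adds $cI$ anyway.)

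The genuine gap is that the proof stops exactly where the theorem begins, and you say so yourself: the claim that $\ker B=T$ at every tuple along the deformation (equivalently, that no second eigenvalue can reach $0$) is supported only by the phrase ``a maximum-principle/uniqueness argument'' combining the sign pattern, irreducibility, and the induction hypothesis. That is a gesture, not an argument, and it is moreover not quite the right gesture: in the actual proof (\cite{Sch93}, Section 6.3) the kernel analysis does not run through irreducibility of $(a_{ij})$ but through the facet decomposition of mixed volumes. Concretely, if $Z\in\ker B$, expanding $B(Z,X)$ over the facets gives $v\bigl(Z_i,(Q_3)_i,\ldots,(Q_d)_i\bigr)=0$ for every facet $i$ (where $Z_i$ is the induced support vector on the $i$-th facet); then expanding $B(Z,Z)$ with respect to $Q_3$, translated so that its support numbers $(h_{Q_3})_i$ are positive, gives $0=B(Z,Z)=\tfrac{1}{d}\sum_i (h_{Q_3})_i\, v\bigl(Z_i,Z_i,(Q_4)_i,\ldots,(Q_d)_i\bigr)$; the inductive hypothesis in dimension $d-1$ makes every summand nonpositive, hence zero, and its equality case identifies each $Z_i$ as the support vector of a point $p_i$; finally, connectedness of the facet-adjacency graph forces all $p_i$ to coincide, so $Z\in T$. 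None of this --- nor the verification of the index at the reference tuple of the deformation --- appears in your proposal. What you have is a correct reduction of the theorem to its essential core, together with a correct inventory of the surrounding classical ingredients; but the core, which you rightly call ``the genuine content of Alexandrov--Fenchel,'' is asserted rather than proved, so the proposal is incomplete as a proof.
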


We identify a support  hyperplane
of an element of  $\mathcal{P}(\Gamma,R)$
with the Euclidean space $\R^d$ by performing a translation
along the ray from the origin orthogonal to the hyperplane.
In this way we consider all facets of elements of $\mathcal{P}(\Gamma,R)$
lying in parallel hyperplanes
as convex polytopes in the same Euclidean space $\R^d$.

The definition of  strong isomorphy and simplicity extend to  $\Gamma$-convex polyhedra, 
considering them as polyhedral hypersurface in the ambient vector space. 
Note that the simplest examples of Euclidean convex polytopes, the simplices, are simple, 
but the  simplest examples of $\Gamma$-convex polyhedra, those defined by only one orbit, 
are not simple (if $d>1$).
Let us formalize the definition of strong isomorphy. The \emph{normal cone} $N(P)$ of 
a convex $\Gamma$-polyhedron $P$ is the decomposition of $\F$ by convex cones defined by the 
inward normals to the facets of $P$.  It is the minimal decomposition
of $\F$ such that the extended support functions of $P$ is the restriction of a linear form on 
each part.
 If the normal fan $N(Q)$ subdivides $N(P)$,
then we write $N(Q) > N(P)$. Note that 
$$N(P+Q)>N(P). $$
Two convex $\Gamma$-polyhedron $P$ and $Q$ are \emph{strongly isomorphic} if $N(P)=N(Q)$.
If $P$ is simple, we denote by $[P]$  the subset of $\mathcal{P}(\Gamma,R)$ 
made of  polyhedra strongly isomorphic to $P$.

\begin{lemma}\label{lem:[P]}
All elements of $[P]$ are simple and $[P]$ is an open convex cone
of $\R^n$.
\end{lemma}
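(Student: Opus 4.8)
The statement to prove is Lemma~\ref{lem:[P]}: for a simple $\Gamma$-convex polyhedron $P$, every element of $[P]$ is simple, and $[P]$ is an open convex cone of $\R^n$.

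Let me sketch what needs proving and how I'd attack it.

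The claim has two parts, and the real content is the second (that $[P]$ is convex), since simplicity of all elements of $[P]$ is almost immediate from the definition: elements of $[P]$ are by definition strongly isomorphic to $P$, meaning they share the normal cone $N(P)$, and simplicity of a $\Gamma$-convex polyhedron is a property of its normal cone (a polyhedron is simple iff each vertex lies in exactly $d$ facets, equivalently iff each top-dimensional cone of the normal decomposition is simplicial). So if $P$ is simple and $Q$ has $N(Q)=N(P)$, then $Q$ is simple. That $[P]\subset\mathcal P(\Gamma,R)$ is a cone is also clear, since scaling a support vector by $\lambda>0$ scales the polyhedron by $\lambda$ without changing its normal decomposition, hence preserves strong isomorphy.

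**Plan of proof.** First I would establish that $[P]$ is open. Since $P$ is simple, each vertex of $P$ lies in exactly $d$ facets, whose inward normals are linearly independent (they span a simplicial cone of the normal decomposition). The position of a vertex is the unique solution of the $d$ linear equations $\langle x,\eta_{i_k}\rangle_- = -h(i_k)$ determined by the adjacent support numbers, so each vertex depends continuously—indeed linearly—on the support vector $h$ as long as these $d$ normals stay linearly independent, which holds for all of $\mathcal P(\Gamma,R)$. For a \emph{simple} polyhedron the combinatorial type is stable under small perturbations of $h$: no new facet can appear and no facet can degenerate, because the strict inequalities $\langle \eta_i,x\rangle_-<-h(i)$ defining which hyperplanes support facets (as in the proof of Lemma~\ref{lem:ens pol}) remain strict under small changes, and simpleness forbids the coincidences that would merge or create vertices. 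Hence a whole neighborhood of $h$ in $\mathcal P(\Gamma,R)$ consists of polyhedra with the same normal decomposition, so $[P]$ is open.

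**Convexity — the main obstacle.** The heart of the lemma is showing $[P]$ is convex, for which I would invoke the convexity argument already used for $\mathcal P(\Gamma,R)$ in Lemma~\ref{lem:ens pol}, but now tracking the \emph{full} combinatorial structure rather than just the set of facets. The key fact (stated in the excerpt just before the lemma) is that the Minkowski sum of two strongly isomorphic simple polyhedra is simple and strongly isomorphic to them, with support vectors adding. So if $h,h'\in[P]$ support polyhedra $Q,Q'$ with $N(Q)=N(Q')=N(P)$, then $Q+Q'$ is strongly isomorphic to $P$ and has support vector $h+h'$, giving $h+h'\in[P]$; combined with the cone property this yields convexity. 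The delicate point I expect to be the main obstacle is justifying rigorously that strong isomorphy of $\Gamma$-convex polyhedra behaves under Minkowski addition exactly as in the Euclidean case—i.e.\ that $N(Q+Q')$ is the common refinement $N(Q)\wedge N(Q')$, which equals $N(P)$ when the two coincide. This is where one must be careful that the normal decomposition of $\F$ (rather than of all of $\R^{d+1}$) interacts correctly with Minkowski sums; I would argue it by noting that the extended support function of $Q+Q'$ is $H_Q+H_{Q'}$, that a function is linear on a cone of $\F$ iff both summands are, and that the coarsest decomposition on which $H_Q+H_{Q'}$ is piecewise linear is therefore the common refinement of the two individual decompositions. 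Once this identification of normal cones under addition is in hand, convexity of $[P]$ follows formally.
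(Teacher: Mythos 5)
Your proposal is correct and follows essentially the same route as the paper: simplicity and openness come from the classical stability argument for simple polyhedra (which the paper simply cites from Alexandrov, noting only the Fuchsian subtlety that moving one support number moves a whole orbit of facets at once -- harmless, since the perturbation argument never needs facets to move independently), and convexity comes from the fact that the sum of two support functions piecewise linear on the decomposition $N(P)$ of $\F$ is again piecewise linear on $N(P)$, combined with $N(Q+Q')>N(Q)$ to pin down equality of fans. The only slip is dimensional: a vertex of a simple $\Gamma$-convex polyhedron, viewed as a hypersurface in $\R^{d+1}$, lies in exactly $d+1$ facets (not $d$), so each vertex is cut out by $d+1$ linear equations.
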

\begin{proof}
The fact that all elements of $[P]$ are simple and that $[P]$ is open
are classical, see for example \cite{Ale37}. The only difference with 
the Euclidean convex polytopes case is that,
around a vertex, two facets can belong to the same orbit for the action of $\Gamma$,
hence when one wants to slightly move a facet adjacent to a vertex, one actually moves two (or more) facets.
But this does not break the simplicity, nor the strong isomorphy class. 
Moreover $[P]$ is a convex cone as the sum of two functions piecewise linear on the same decomposition of 
$\F$ gives a piecewise linear function on the same decomposition. 
\end{proof}

Suppose that $P$ is simple, has $n$ facets (in a fundamental domain),
 and let $h_1,\ldots,h_{d+1}\in[P]$ (support vectors of polyhedra strongly isomorphic to $P$). 
Let us denote by $F_k(i)$ the $i$th facet of the polyhedron with support vector $h_k$, and let
 $h(F_k(i))$ be its support vector ($F_k(i)$ is seen as a convex polytope in $\R^d$).
The entries of  $h(F_k(i))$ have the form \eqref{eq: supp num mink} so the map $h_k\mapsto h(F_k(i))$ is linear.
This map can be defined formally  for all $Z\in\R^n$ using \eqref{eq: supp num mink}. 
The \emph{mixed face area}
$A(h_2,\ldots,h_{d+1})$ is the vector formed by the entries $V_E(h(F_2(i)),\ldots,h(F_{d+1}(i)))$, $i=1,\ldots,n$. 
Together with \eqref{eq:def vol pol}, this implies that $\mathrm{covol}$ is a $(d+1)$-homogeneous polynomial, and we call 
 \emph{mixed-covolume}
its polarization
$\mathrm{covol}(\cdot,\ldots,\cdot)$. Note that $\mathrm{covol}$ is $C^{\infty}$ on $[P]$.

\begin{lemma}\label{lem: gen mixed pol}
We have the following equalities, for $X_i\in\R^n$.
\begin{enumerate}[nolistsep,label={\bf(\roman{*})}, ref={\bf(\roman{*})}]
 \item $D^{d-1}_{X_2}A(X_3,\ldots,X_{d+1})=d! A(X_2,\ldots,X_{d+1})$,
\item $D_{X_1}\mathrm{covol} (X_2)=(d+1)\mathrm{covol}(X_2,X_1,\ldots,X_1)$,
\item $D^2_{X_1}\mathrm{covol} (X_2,X_3)=(d+1)d\mathrm{covol}(X_2,X_3,X_1,\ldots,X_1)$, \label{hess mixed pol}
\item $D^{d}_{X_1} \mathrm{covol} (X_2,\ldots,X_{d+1})=(d+1)!\mathrm{covol}(X_1,\ldots,X_{d+1})$,
\item $\mathrm{covol}(X_1,\ldots,X_{d+1})=\frac{1}{d+1}\big\langle X_1, A(X_2,\ldots,X_{d+1})\big\rangle$.
\label{mv pol}
\end{enumerate}
\end{lemma}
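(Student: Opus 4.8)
The plan is to read this as the polyhedral counterpart of Lemma~\ref{eq: reg mix vol gen} and prove it by the same mechanism, with $\kappa^{-1}$ and the $L^2$-pairing replaced by the face-area map $A$ and the Euclidean product $\big\langle\cdot,\cdot\big\rangle$ of $\R^n$. The structural point on which everything rests is that all the objects involved become genuine polynomials once we restrict to a fixed strong isomorphism class $[P]$: there the combinatorics is constant, each facet $F_k(i)$ of a member of $[P]$ is a Euclidean polytope whose support vector depends \emph{linearly} on $h_k$ through \eqref{eq: supp num mink}, so its Euclidean volume is a homogeneous polynomial of degree $d$. Hence $A$ is a vector-valued homogeneous polynomial of degree $d$ with polarization $A(\cdot,\ldots,\cdot)$, and by \eqref{eq:def vol pol} $\mathrm{covol}$ is a homogeneous polynomial of degree $d+1$ with polarization $\mathrm{covol}(\cdot,\ldots,\cdot)$. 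Every formula below is first checked on the open cone $[P]$ and then holds as a polynomial identity for all $X_i\in\R^n$.

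The first four identities are then instances of one elementary fact: for a homogeneous polynomial $f$ of degree $m$ on $\R^n$ with symmetric polar form $\bar f$ one has
\[
D^k_{a}f(u_1,\ldots,u_k)=\frac{m!}{(m-k)!}\,\bar f(\underbrace{a,\ldots,a}_{m-k},u_1,\ldots,u_k),
\]
which I would prove by induction on $k$ from the definition of the directional derivative and the multilinearity of $\bar f$. Taking $f=A$, $m=d$, $k=d-1$, $a=X_2$ gives the first identity; taking $f=\mathrm{covol}$, $m=d+1$ and $k=1,2,d$ gives the second, the third \ref{hess mixed pol}, and the fourth identities respectively, after reading off $\tfrac{(d+1)!}{d!}=d+1$, $\tfrac{(d+1)!}{(d-1)!}=(d+1)d$ and $\tfrac{(d+1)!}{1!}=(d+1)!$.

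For the last identity \ref{mv pol} I would iterate the already-proven first derivative \eqref{eq: der vol pol}, which as a polynomial identity reads $D_{p}\mathrm{covol}(X)=\big\langle X,A(p)\big\rangle$ for every base point $p$ and direction $X$. Differentiating first in the direction $X_1$, which then sits linearly in the inner product and is unaffected by further differentiation of the base point, and then $d-1$ times in the directions $X_3,\ldots,X_{d+1}$, induction gives
\[
D^d_{X_2}\mathrm{covol}(X_1,X_3,\ldots,X_{d+1})=\big\langle X_1,\,D^{d-1}_{X_2}A(X_3,\ldots,X_{d+1})\big\rangle.
\]
By the first identity the right-hand side equals $d!\,\big\langle X_1,A(X_2,\ldots,X_{d+1})\big\rangle$, while by the fourth identity together with the symmetry of the mixed-covolume the left-hand side equals $(d+1)!\,\mathrm{covol}(X_1,\ldots,X_{d+1})$; dividing by $(d+1)!$ yields \ref{mv pol}.

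The computations are routine; the only delicate point is the one flagged at the outset, namely that the polynomial--polarization formalism is legitimate \emph{only} after restricting to $[P]$, where the combinatorial type is frozen and \eqref{eq:def vol pol} and \eqref{eq: der vol pol} genuinely define polynomials. I would also remark that the symmetry of $\mathrm{covol}(\cdot,\ldots,\cdot)$ used in the final step is automatic from its being a polarization, so the self-adjointness \eqref{eq: A self adj} is upgraded to the full symmetry of $\big\langle X_1,A(X_2,\ldots,X_{d+1})\big\rangle$ as a byproduct of \ref{mv pol}.
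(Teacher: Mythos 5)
Your proof is correct and follows essentially the same route as the paper: the paper's proof simply declares the lemma analogous to Lemma~\ref{eq: reg mix vol gen}, whose items there are proved exactly as you do --- (i) and (iv) by induction on the order of the derivative using the definition of the directional derivative and the multilinearity of the polar form (your polarization identity), (ii)--(iii) along the way, and (v) deduced from the first-derivative formula (here \eqref{eq: der vol pol}) combined with (i) and (iv). Your explicit remark that one must freeze the combinatorics on $[P]$ so that $A$ and $\mathrm{covol}$ are genuine polynomials, with the stated identities then holding as polynomial identities on all of $\R^n$, is a careful spelling-out of what the paper leaves implicit.
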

\begin{proof}
 The proof is analogous to the one of Lemma~\ref{eq: reg mix vol gen}.
\end{proof}

\begin{corollary}\label{cor: pol mix vol pos}
 For $h_i\in [P]$, $\mathrm{covol}(h_1,\ldots,h_{d+1})$ is non-negative.
\end{corollary}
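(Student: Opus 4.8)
The plan is to mirror the proof of Corollary~\ref{cor: reg mix vol pos} from the smooth case, with the mixed face area $A$ playing the rôle of the mixed curvature $\kappa^{-1}$ and the non-negativity of mixed volumes of convex bodies replacing the positivity of mixed determinants of positive definite matrices. First I would invoke the representation \ref{mv pol} of Lemma~\ref{lem: gen mixed pol},
$$\mathrm{covol}(h_1,\ldots,h_{d+1})=\frac{1}{d+1}\big\langle h_1, A(h_2,\ldots,h_{d+1})\big\rangle,$$
which reduces the claim to showing that this inner product in $\R^n$ is non-negative. Note that, unlike in the smooth case, the formula carries no minus sign: here the relevant support numbers are the positive quantities $h(i)=-h(\eta_i)$, so the expected sign of the pairing is opposite to the smooth setting and still yields the correct conclusion.

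Next I would examine each factor separately. The vector $h_1$ is the support vector of a $\Gamma$-convex polyhedron, so each of its components is a support number $h_1(i)$, hence strictly positive. For the second factor, recall that the $i$th entry of $A(h_2,\ldots,h_{d+1})$ is by definition $V_E(h(F_2(i)),\ldots,h(F_{d+1}(i)))$, the Euclidean mixed volume of the $i$th facets of the polyhedra with support vectors $h_2,\ldots,h_{d+1}$. Since $h_2,\ldots,h_{d+1}\in[P]$, these polyhedra are strongly isomorphic to $P$, and by Lemma~\ref{lem:[P]} they are simple; consequently their $i$th facets $F_2(i),\ldots,F_{d+1}(i)$ are genuine, pairwise strongly isomorphic convex polytopes lying in a common $\R^d$ (after the identification of parallel support hyperplanes). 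The mixed volume of convex bodies is non-negative \cite{Sch93}, so each component of $A(h_2,\ldots,h_{d+1})$ is non-negative.

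Combining the two observations, $\big\langle h_1, A(h_2,\ldots,h_{d+1})\big\rangle$ is a sum of products of a positive number with a non-negative number, hence non-negative, which gives $\mathrm{covol}(h_1,\ldots,h_{d+1})\geq 0$. The weaker conclusion (non-negative rather than strictly positive, as in Corollary~\ref{cor: reg mix vol pos}) is exactly what one should expect, since a mixed volume of convex polytopes may vanish when some facet degenerates, whereas a mixed determinant of positive definite matrices cannot.

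The only point requiring care --- and the analogue of the main step in the smooth case --- is the guarantee that the entries of the mixed face area are honest mixed volumes of convex bodies, rather than merely the formal values obtained by substituting arbitrary vectors $Z\in\R^n$ into \eqref{eq: supp num mink}. This is precisely what the hypothesis $h_i\in[P]$ supplies: strong isomorphy keeps all facets within a single simple combinatorial type, so the linear map $h_k\mapsto h(F_k(i))$ sends each $h_k$ to the support vector of a true convex polytope, and the multilinear form $V_E(\cdot,\ldots,\cdot)$ coincides with the classical mixed volume on that strong isomorphy class, where non-negativity is available.
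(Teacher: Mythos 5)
Your proof is correct and follows essentially the same route as the paper: both use the representation $\mathrm{covol}(h_1,\ldots,h_{d+1})=\frac{1}{d+1}\big\langle h_1, A(h_2,\ldots,h_{d+1})\big\rangle$ from Lemma~\ref{lem: gen mixed pol}~\ref{mv pol}, the positivity of the entries of $h_1$, and the non-negativity of Euclidean mixed volumes of strongly isomorphic simple polytopes (Theorem~5.1.6 in \cite{Sch93}) applied to the facets. Your additional remarks on the sign convention and on why only non-negativity (rather than positivity) holds are accurate but not needed for the argument.
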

\begin{proof}
As $h_i$ are support vectors of  strongly isomorphic
simple polyhedra, the entries of $A(h_2,\ldots,h_{d+1})$ are mixed-volume
of simple strongly isomorphic Euclidean convex polytopes, hence are non-negative
(see Theorem~5.1.6 in
 \cite{Sch93}).
The result follows from \ref{mv pol} because the entries of $h_1$ are positive.
\end{proof}

\begin{lemma}\label{lem: noyau trivial pol}
 For any $h_1,\ldots,h_{d-1}\in[P]$,   the symmetric bilinear form
$$\mathrm{covol}(\cdot,\cdot,h_1,\ldots,h_{d-1}) $$
has trivial kernel.
\end{lemma}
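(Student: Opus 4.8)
The plan is to imitate the proof of Lemma~\ref{lem: noyau trivial reg} step by step, replacing the algebraic inequality Theorem~\ref{thm:alg lin det} by its polytope incarnation, the Alexandrov--Fenchel inequality Theorem~\ref{thm: AF eucl}, and replacing the final differential argument (``$Z''=0\Rightarrow\tilde Z$ affine'') by a combinatorial propagation argument. First I would unwind membership in the kernel. Using the symmetry of the mixed-covolume and part~\ref{mv pol} of Lemma~\ref{lem: gen mixed pol}, the vanishing of $\mathrm{covol}(X,Z,h_1,\ldots,h_{d-1})$ for every $X\in\R^n$ is equivalent, since $\big\langle\cdot,\cdot\big\rangle$ is an inner product on $\R^n$, to $A(Z,h_1,\ldots,h_{d-1})=0$. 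Each entry of this mixed face area vector is $V_E(h(F_Z(i)),h(F_1(i)),\ldots,h(F_{d-1}(i)))$, where the $h(F_k(i))$ are support vectors of the facets $F_k(i)$; these are strongly isomorphic simple convex polytopes of $\R^d$, the facet normal fans all being the link of the ray $\R^+\eta_i$ in the common normal fan $N(P)$. Hence each such mixed volume is $0$, and Theorem~\ref{thm: AF eucl} yields $V_E(h(F_Z(i)),h(F_Z(i)),h(F_2(i)),\ldots,h(F_{d-1}(i)))\le 0$ for every facet.

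Next I would feed these inequalities back into the covolume. By part~\ref{mv pol} of Lemma~\ref{lem: gen mixed pol} and symmetry of the mixed-covolume,
\[
0=\mathrm{covol}(Z,Z,h_1,\ldots,h_{d-1})=\frac{1}{d+1}\big\langle h_1,A(Z,Z,h_2,\ldots,h_{d-1})\big\rangle,
\]
where the $i$th entry of $A(Z,Z,h_2,\ldots,h_{d-1})$ is the nonpositive number $V_E(h(F_Z(i)),h(F_Z(i)),h(F_2(i)),\ldots,h(F_{d-1}(i)))$. Since every entry of the support vector $h_1$ is positive, a sum of products of positive and nonpositive numbers vanishes only if every product vanishes; thus each of these mixed volumes is in fact $0$. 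The equality clause of Theorem~\ref{thm: AF eucl} then forces $h(F_Z(i))$ to be the support vector of a point of $\R^d$, for each $i$.

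The main obstacle, and the place where the argument genuinely departs from the smooth case, is to deduce $Z=0$ from the statement ``$F_Z(i)$ is a point for every $i$''. Here is the propagation argument I would carry out. Reading the defining formula \eqref{eq: supp num mink} as saying that the support function of the virtual facet $F_Z(i)$ is the transverse part of the directional derivative of the piecewise linear extension $\tilde{Z}$ along the ray $\R^+\eta_i$, the condition that $F_Z(i)$ be a point means that $\tilde{Z}$ has no bend transverse to $\eta_i$; together with $1$-homogeneity this says that $\tilde{Z}$ coincides with a single linear form $\ell_i=\langle p_i,\cdot\rangle_-$ on the entire star of $\R^+\eta_i$, that is, on every top-dimensional cone of $N(P)$ incident to $\eta_i$. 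If a vertex $v$ of $P$ lies on two adjacent facets $F_i$ and $F_j$, its normal cone $\sigma_v$ is incident to both $\eta_i$ and $\eta_j$, whence $\ell_i=\tilde{Z}|_{\sigma_v}=\ell_j$. The facets tile $\partial P$, which projects homeomorphically onto the connected space $\H^d$, so the facet--adjacency graph of $P$ is connected; therefore all the $\ell_i$ coincide with one linear form $\ell=\langle p,\cdot\rangle_-$, and $\tilde{Z}=\langle p,\cdot\rangle_-$ on all of $\F$.

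It remains to finish exactly as in Lemma~\ref{lem: noyau trivial reg}: the $\Gamma$-invariance of $\tilde{Z}$ gives $\langle p,\gamma\cdot\rangle_-=\langle p,\cdot\rangle_-$, i.e.\ $\gamma^{-1}p=p$ for every $\gamma\in\Gamma$, because $\Gamma$ preserves $\langle\cdot,\cdot\rangle_-$. A cocompact Fuchsian group fixes no nonzero vector of $\R^{d+1}$ (a fixed timelike vector would give a fixed point in $\H^d$, a fixed lightlike vector a fixed point at infinity, and a fixed spacelike vector an invariant totally geodesic hyperplane — all incompatible with acting freely and cocompactly on $\H^d$); this is the same fact used implicitly at the end of Lemma~\ref{lem: noyau trivial reg}. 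Hence $p=0$, so $\tilde{Z}\equiv 0$ and $Z=0$, proving that the kernel is trivial. I expect the only subtle point to be justifying the equivalence ``$F_Z(i)$ is a point $\iff$ $\tilde{Z}$ is linear on the star of $\eta_i$'', which should follow cleanly from reading \eqref{eq: supp num mink} as the transverse-derivative rule.
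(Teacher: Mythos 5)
Your proof is correct. Its first half --- kernel membership plus \ref{mv pol} of Lemma~\ref{lem: gen mixed pol} gives $A(Z,h_1,\ldots,h_{d-1})=0$, then Theorem~\ref{thm: AF eucl} and the sign of the entries of $h_1$ force each mixed volume $V_E\bigl(h(F_Z(i)),h(F_Z(i)),h(F_2(i)),\ldots,h(F_{d-1}(i))\bigr)$ to vanish, so the equality clause makes every virtual facet $F_Z(i)$ a point --- is exactly what the paper does (it simply instructs the reader to rerun the proof of Lemma~\ref{lem: noyau trivial reg} with Theorem~\ref{thm: AF eucl} in place of Theorem~\ref{thm:alg lin det}). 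Where you genuinely diverge is in deducing $Z=0$ from ``every $F_Z(i)$ is a point''. The paper works on the body side: since $[P]$ is open, $h_1+\epsilon Z\in[P]$ for small $\epsilon$; adding the support vector of a point translates each facet, the facet translations must agree along codimension-$2$ faces, so the perturbed polyhedron is a global translate of $P_1$; and a nonzero translate of a $\Gamma$-convex polyhedron is never $\Gamma$-convex, whence $Z=0$. You work on the support-function side: the piecewise linear extension $\tilde Z$ is a single linear form on the star of each facet normal, these forms propagate across the connected facet-adjacency graph to one global form $\langle p,\cdot\rangle_-$, and $\Gamma$-invariance forces $\gamma p=p$ for all $\gamma$, hence $p=0$ because a cocompact Fuchsian group fixes no nonzero vector. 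The two finishes are dual (translating a body by $v$ is adding $\langle v,\cdot\rangle_-$ to its support function, and ``no invariant translate'' is ``no fixed vector''), but yours avoids the $\epsilon$-perturbation and ends with the same rigidity step as the smooth case, making the polyhedral and smooth proofs uniform; the paper's version instead stays with honest polyhedra throughout. Two small remarks: the equivalence you flag as subtle does hold, and uses simplicity of $P$ (so the facet normal fans are simplicial and values at the rays, given by \eqref{eq: supp num mink}, determine the PL function on the star); and your lightlike case in the fixed-vector trichotomy can be disposed of directly by the paper's Sublemma~\ref{sub:horo}, since a fixed future lightlike vector would have a finite orbit, which cannot accumulate at the origin.
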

\begin{proof}
 The analog of the proof of Lemma~\ref{lem: noyau trivial reg}, using Theorem~\ref{thm: AF eucl}
instead of Theorem~\ref{thm:alg lin det}, gives that 
in each support hyperplane, the ``support vectors'' of $Z$ (formally given by  \eqref{eq: supp num mink})
are the ones of a point of $\R^d$. Let us denote by $Z_i$ the support vector of $Z$
in the hyperplane with normal $\eta_i$.
 
If $\epsilon$ is sufficiently small then $h_1+\epsilon Z$ is the support vector of a $\Gamma$-convex polyhedron
$P_1^{\epsilon}$ strongly isomorphic to 
$P_1$, the one with support vector $h_1$.
Moreover the support numbers of the $i$th facet $F_i$ of $P_1^{\epsilon}$
are the sum of the support numbers of the facet $F_i^1$ of $P_1$ with the coefficients of $\epsilon Z_i$. As  
$Z_i$ is the support vector of a point in $\mathbb{R}^d$, $F_i$ 
 is obtained form $F_i^1$ by a translation.
It follows that each facet of 
$P_1^{\epsilon}$ is obtained by a translation of the corresponding facet 
of $P_1$,  hence $P_1^{\epsilon}$ is a translate of $h_1$ (the translations of each facet have to coincide on each codimension
$2$ face). 
As $h_1+\epsilon Z$ is supposed to be a $\Gamma$-convex polyhedron for $\epsilon$ sufficiently
small, and as the translation of a $\Gamma$-convex polyhedron is not a $\Gamma$-convex polyhedron,
it follows that $Z=0$.
\end{proof}

\begin{theorem}\label{thm:hess vol def pos pol}
 For any $h_1,\ldots,h_{d-1}\in[P]$,   the symmetric bilinear form
$$\mathrm{covol}(\cdot,\cdot,h_1,\ldots,h_{d-1}) $$ is positive definite.
\end{theorem}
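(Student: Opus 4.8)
The plan is to reproduce, in the present finite-dimensional setting, the continuity argument used for Theorem~\ref{thm:hess vol def pos reg}; the two inputs it needs are already available. By \ref{mv pol} of Lemma~\ref{lem: gen mixed pol}, the symmetric matrix $M(h_1,\dots,h_{d-1})$ representing $\mathrm{covol}(\cdot,\cdot,h_1,\dots,h_{d-1})$ in the standard basis of $\R^n$ has entries $\frac{1}{d+1}\big\langle e_i, A(e_j,h_1,\dots,h_{d-1})\big\rangle$; since the mixed face area $A$ is multilinear, these entries are polynomial, hence continuous, in $(h_1,\dots,h_{d-1})$.

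First I would fix some $g\in[P]$ (for instance $g=h_1$) and, using that $[P]$ is a convex cone (Lemma~\ref{lem:[P]}), form the segments $h_i(t)=(1-t)g+t\,h_i\in[P]$ for $t\in[0,1]$, $i=1,\dots,d-1$. Writing $M_t:=M(h_1(t),\dots,h_{d-1}(t))$, the map $t\mapsto M_t$ is polynomial. I would then establish, exactly as in the three steps \ref{preuve1}--\ref{preuve3} of that proof, that the set $T=\{t\in[0,1]:M_t\text{ is positive definite}\}$ is nonempty, open and closed. For nonemptiness, at $t=0$ all arguments equal $g$, so by \ref{hess mixed pol} of Lemma~\ref{lem: gen mixed pol} one has $M_0=\frac{1}{(d+1)d}D^2_g\mathrm{covol}$, which is positive definite by Theorem~\ref{thm: hess pos}. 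Openness is immediate: the smallest eigenvalue of a symmetric matrix is continuous in its entries, which vary continuously (indeed polynomially) in $t$, so $T$ is open. For closedness, if $t_k\to t_0$ with each $M_{t_k}$ positive definite, then entrywise convergence gives $X^{\top}M_{t_0}X=\lim_k X^{\top}M_{t_k}X\geq 0$ for every $X\in\R^n$, so $M_{t_0}$ is positive semi-definite; since $h_i(t_0)\in[P]$, Lemma~\ref{lem: noyau trivial pol} says its kernel is trivial, whence it is positive definite and $t_0\in T$. As $[0,1]$ is connected, $T=[0,1]$, and $t=1$ yields positive definiteness of $\mathrm{covol}(\cdot,\cdot,h_1,\dots,h_{d-1})$.

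The genuine content has already been extracted into the quoted results, so no essentially new difficulty arises. The step deserving the most care is closedness: one must observe that positive semi-definiteness of the limit is automatic from entrywise convergence (unlike the regular case, no dominated-convergence argument is needed, since here we work in $\R^n$), while the upgrade from semi-definite to definite rests entirely on the triviality of the kernel furnished, through Lemma~\ref{lem: noyau trivial pol}, by the Euclidean Alexandrov--Fenchel inequality (Theorem~\ref{thm: AF eucl}). The positivity anchor at $t=0$ comes from the strict diagonal dominance established in Theorem~\ref{thm: hess pos}, which plays here the role that the spectrum of the Laplacian (Lemma~\ref{lem: vol def pos sphere}) played in the smooth case.
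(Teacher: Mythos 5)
Your proof is correct and takes essentially the same route as the paper: the paper dismisses this theorem with the single line that the proof is analogous to that of Theorem~\ref{thm:hess vol def pos reg}, and your write-up is exactly the natural instantiation of that analogy (a continuity method along segments in $[P]$). Your adaptations are the right ones: the anchor at a point of $[P]$ via Theorem~\ref{thm: hess pos} replaces Lemma~\ref{lem: vol def pos sphere}, finite-dimensional eigenvalue continuity and entrywise convergence replace the Kato perturbation and dominated-convergence steps, and Lemma~\ref{lem: noyau trivial pol} supplies the kernel triviality just as Lemma~\ref{lem: noyau trivial reg} did in the smooth case.
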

\begin{proof}
 The proof is analogous to the one of Theorem~\ref{thm:hess vol def pos reg}.
\end{proof}

\paragraph{Remark on spherical polyhedra}

The sets of strongly isomorphic simple $\Gamma$-convex polyhedra form convex cones in
 vector spaces (Lemma~\ref{lem:[P]}). The mixed-covolume allow to endow these vector spaces with an inner product. Hence, if
we restrict to polyhedra of covolume $1$, those sets are isometric to convex spherical polyhedra. 
For $d=1$ we get simplices named orthoschemes \cite{polymink}.
In $d=2$, if we look at the metric induced on the boundary of the Fuchsian polyhedra, 
we get spherical metrics on subsets of the spaces of flat metrics with cone-singularities of negative curvature on the 
compact surfaces of genus $>1$. It could be interesting to investigate the shape of these subsets.

\section{General case}\label{sec:gen}

\subsection{Convexity of the covolume}

\paragraph{Hausdorff metric}

Recall that $\K(\Gamma)$ is the set of $\Gamma$-convex bodies for a given $\Gamma$. For $K,K'$ we
define the \emph{Hausdorff metric} by
$$ d(K,K')=\min\{\lambda \geq 0 |  K'+\lambda B \subset K,   K+\lambda B\subset K' \}. $$

It is not hard to check that this is a distance and that  Minkowski sum and multiplication by a positive scalar are continuous
for this distance. 
If we identify $\Gamma$-convex bodies with their support functions, then $\K(\Gamma)$
is isometric to a convex cone in $C^0(\H^d/\Gamma)$ endowed with the maximum norm, i.e.:
$$d(K,K')=\sup_{\eta\in\H^d/\Gamma} |h(\eta)-h'(\eta)|.$$
The proofs is easy and formally the same as in the Euclidean case 
\cite[1.8.11]{Sch93}. 

\begin{lemma}
The covolume is a continuous function. 
\end{lemma}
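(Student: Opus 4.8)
The plan is to prove continuity sequentially: I take $K_n \to K$ in $\K(\Gamma)$, set $\lambda_n = d(K_n,K) \to 0$, and show $\mathrm{covol}(K_n) \to \mathrm{covol}(K)$. The starting point is the sandwich built into the Hausdorff metric together with the two facts already recorded, namely monotonicity ($K\subset L \Rightarrow \mathrm{covol}(K)\geq \mathrm{covol}(L)$), homogeneity ($\mathrm{covol}(\mu K)=\mu^{d+1}\mathrm{covol}(K)$), and the behaviour of support functions under the operations (so that $h_{K+\lambda B}=h_K-\lambda$ on $\H^d$, since $h_B\equiv -1$). By definition of $d$, the inclusions $K_n+\lambda_n B\subset K$ and $K+\lambda_n B\subset K_n$ hold. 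Applying monotonicity to the second gives the upper bound $\mathrm{covol}(K_n)\leq \mathrm{covol}(K+\lambda_n B)$, and to the first gives $\mathrm{covol}(K)\leq \mathrm{covol}(K_n+\lambda_n B)$.

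Everything then reduces to a single uniform \emph{shell estimate}: I claim it suffices to show that
\[
\varepsilon_n := \sup_{L\in\{K\}\cup\{K_m\}_m}\bigl(\mathrm{covol}(L+\lambda_n B)-\mathrm{covol}(L)\bigr)\longrightarrow 0 .
\]
Indeed, the upper bound rewrites as $\mathrm{covol}(K_n)\leq \mathrm{covol}(K)+\bigl(\mathrm{covol}(K+\lambda_n B)-\mathrm{covol}(K)\bigr)$, so $\limsup_n \mathrm{covol}(K_n)\leq \mathrm{covol}(K)$; and the lower inequality rewrites as $\mathrm{covol}(K_n)\geq \mathrm{covol}(K)-\bigl(\mathrm{covol}(K_n+\lambda_n B)-\mathrm{covol}(K_n)\bigr)$, so $\liminf_n \mathrm{covol}(K_n)\geq \mathrm{covol}(K)$. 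Note that $L+\lambda_n B\subset L$ (adding $\lambda_n B$ makes the body smaller), so each difference is nonnegative and the two bounds pinch $\mathrm{covol}(K_n)$ onto $\mathrm{covol}(K)$.

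The shell estimate is where the only real work lies, and I would settle it by a scaling comparison rather than by estimating the volume of the region between $\partial L$ and $\partial(L+\lambda_n B)$ directly. The key observation is that on the compact manifold $\H^d/\Gamma$ the support function $h_L$ is continuous and strictly negative, hence bounded away from $0$; using the uniform bounds on $h_{K_n}$ coming from $h_K-\lambda_n\leq h_{K_n}\leq h_K+\lambda_n$ (valid once $\lambda_n$ is small), there is a single constant $c>0$ with $-h_L\geq c$ for $L=K$ and for all large $n$. For $\mu_n=1+\lambda_n/c$ one checks at the level of support functions that $\mu_n h_L = h_L-(\lambda_n/c)(-h_L)\leq h_L-\lambda_n = h_{L+\lambda_n B}$, i.e.\ $\mu_n L\subset L+\lambda_n B\subset L$. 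Monotonicity and homogeneity then yield
\[
\mathrm{covol}(L+\lambda_n B)-\mathrm{covol}(L)\;\leq\;\mathrm{covol}(\mu_n L)-\mathrm{covol}(L)\;=\;(\mu_n^{\,d+1}-1)\,\mathrm{covol}(L).
\]
Since the same lower bound $h_{K_n}\geq \min_M h_K-1=:a_0$ traps every $K_n$ inside the fixed pseudo-sphere body with support function $\equiv a_0$, the covolumes $\mathrm{covol}(K_n)$ are uniformly bounded; as $\mu_n^{\,d+1}-1=(1+\lambda_n/c)^{d+1}-1\to 0$, we get $\varepsilon_n\to 0$, completing the argument. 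The main obstacle is precisely obtaining this bound \emph{uniformly} over the varying family $\{K_n\}$; the scaling trick circumvents having to control the radial (graph) functions and their Lipschitz dependence on $h$, reducing the whole uniform estimate to the elementary remark that $-h_{K_n}$ stays bounded away from $0$.
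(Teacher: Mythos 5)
Your proof is correct and runs on the same engine as the paper's: support functions of $\Gamma$-convex bodies are uniformly negative on the compact quotient, so an additive (Hausdorff) perturbation of size $\lambda_n$ can be absorbed by a dilation $\mu_n=1+\lambda_n/c$, and homogeneity plus monotonicity of the covolume then conclude; your detour through the shell bodies $L+\lambda_n B$ and the pinching argument is a sequential repackaging of the paper's direct comparison of $K$ with a nearby body $\overline{K}$ via the mutual inclusions $\lambda\overline{K}\subset K$ and $\lambda K\subset\overline{K}$. It is worth noting that your signs are the right ones (what is needed is the upper bound $h_L\le -c$, i.e.\ a positive lower bound on $-h_L$), whereas the paper's written chain $\overline{h}\le h+\alpha\le h+(1-\lambda)\rho\le h+(1-\lambda)\overline{h}$ is built on the lower bound $\rho<h$ and its last step would require $\overline{h}\le\rho$, contradicting the previously established $\rho<\overline{h}$; so the paper's choice of constants needs exactly the small repair (replace the bound involving $\rho$ by one involving $\max h$) that your argument already implements.
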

\begin{proof}
Let $K$ be in $\K(\Gamma)$ with support function $h$.
For a given $\epsilon>0$, choose $\lambda>1$ such that $(\lambda^{d+1}-1)\lambda^{d+1} \mathrm{covol}(K)<\epsilon$. 
Let $\rho <0$ such that $h>\rho$, and let $\overline{\alpha}>0$ be the
minimum of $h-\rho$. Let $\alpha=\mbox{min}(\overline{\alpha},(1-\lambda)\rho)>0$. In particular, 
\begin{equation}\label{eq alpha}
\rho\leq h-\alpha. 
\end{equation}
Finally, let
$\overline{K}$ with support function $\overline{h}$ be such that
$d(K,\overline{K})<\alpha$. In particular, $ h-\alpha < \overline{h}$, 
that, inserted in \eqref{eq alpha}, gives that $\rho <   \overline{h}$. This and the definition of
$\alpha$ give
$$\overline{h} \leq h+\alpha \leq h+(1-\lambda)\rho\leq h+(1-\lambda)\overline{h},$$
i.e.~$\lambda \overline{h} \leq h$, i.e.~$\lambda \overline{K}\subset K$, in particular
$\mathrm{covol}(K)\leq \lambda^{d+1}\mathrm{covol}(\overline{K})$. In a similar way we get $\mathrm{covol}(\overline{K})\leq \lambda^{d+1}\mathrm{covol}(K)$.
This allows to write
\begin{eqnarray*}
 \ && \mathrm{covol}(K)-\mathrm{covol}(\overline{K}) \leq (\lambda^{d+1}-1)\mathrm{covol}(\overline{K})\leq (\lambda^{d+1}-1)\lambda^{d+1} \mathrm{covol}(K)<\epsilon \\
 \ && \mathrm{covol}(\overline{K})-\mathrm{covol}(K) \leq (\lambda^{d+1}-1)\mathrm{covol}(K)\leq (\lambda^{d+1}-1)\lambda^{d+1} \mathrm{covol}(K)<\epsilon 
\end{eqnarray*}
i.e.~$|\mathrm{covol}(K)-\mathrm{covol}(\overline{K})|< \epsilon$.
\end{proof}

The general results are based on polyhedral approximation.

\begin{lemma}\label{lem: approximation}
Let $K_1,\ldots,K_p\in \K(\Gamma)$. There exists 
a sequence $(P^1_k,\ldots,P^p_k)_k$ of strongly isomorphic simple $\Gamma$-convex polyhedra converging to 
$(K_1,\ldots,K_p)$. 
\end{lemma}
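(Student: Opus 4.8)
The plan is to combine three ingredients: the isometry between $\K(\Gamma)$ and a convex cone in $C^0(\H^d/\Gamma)$ (so that Hausdorff convergence is just uniform convergence of support functions), the elementary density of $\Gamma$-convex polyhedra in $\K(\Gamma)$, and the refinement of normal fans under Minkowski addition recorded in Section~\ref{sec:pol}, namely $N(P+Q)>N(P)$. I work throughout at the level of support functions, writing $h_i$ for the (negative, $\Gamma$-invariant) support function of $K_i$.

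First I would prove that every $\Gamma$-convex body is a Hausdorff limit of $\Gamma$-convex polyhedra. Given $K$ with support function $h$ and $\epsilon>0$, choose a finite $\epsilon$-net of unit future time-like directions $\eta_1,\dots,\eta_N$ in a fundamental domain for the action of $\Gamma$ on $\H^d$, and let $Q$ be the $\Gamma$-convex polyhedron whose support numbers at the $\eta_j$ equal $-h(\eta_j)$, i.e.\ cut out by the orbits of the support planes of $K$ at the $\eta_j$. By construction $Q\supset K$, so $h_Q\ge h$ with equality on the orbit $\Gamma\{\eta_j\}$; since $h$ is uniformly continuous on the compact manifold $\H^d/\Gamma$, the gap $h_Q-h\ge 0$ is uniformly small once the net is fine, whence $\|h_Q-h\|_\infty\to0$. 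This is the Fuchsian counterpart of the classical outer approximation \cite{Sch93}, cocompactness of $\Gamma$ being exactly what replaces compactness of a Euclidean body and what guarantees, as in Section~\ref{sec:pol}, that finitely many directions in a fundamental domain generate a genuine (locally finite) polyhedron. Applying this to each $K_i$ gives, for every $k$, polyhedra $Q_i^{(k)}$ with $\|h_{Q_i^{(k)}}-h_i\|_\infty<1/k$.

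Next I address strong isomorphism and simplicity together. Set $S^{(k)}:=Q_1^{(k)}+\dots+Q_p^{(k)}$; by additivity of support functions and $N(P+Q)>N(P)$, its normal fan refines each $N(Q_i^{(k)})$. By a small \emph{generic} perturbation of the support numbers of $S^{(k)}$ within the open cone $\mathcal{P}(\Gamma,R)$ of Lemma~\ref{lem:ens pol} (where $R$ is the normal set of $S^{(k)}$), I obtain a \emph{simple} $\Gamma$-convex polyhedron $R^{(k)}$ with the same facet normals, so that $N(R^{(k)})>N(S^{(k)})$ and $\|h_{R^{(k)}}-h_{S^{(k)}}\|_\infty<1/k$; that a generic support vector yields a simple polyhedron is the classical genericity fact of \cite{Ale37}. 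In particular $N(R^{(k)})$ refines every $N(Q_i^{(k)})$. Now define
$$ P_i^{(k)}:=Q_i^{(k)}+\tfrac1k R^{(k)},\qquad i=1,\dots,p. $$
On each maximal cone of $N(R^{(k)})$ the function $h_{R^{(k)}}$ is linear and, because this cone lies inside a cone of $N(Q_i^{(k)})$, so is $h_{Q_i^{(k)}}$; hence $h_{P_i^{(k)}}$ is linear there, giving that $N(R^{(k)})$ refines $N(P_i^{(k)})$, while $N(P_i^{(k)})>N(R^{(k)})$ by $N(P+Q)>N(P)$. Thus $N(P_i^{(k)})=N(R^{(k)})$ for all $i$, so $P_1^{(k)},\dots,P_p^{(k)}$ lie in $[R^{(k)}]$: they are strongly isomorphic, and simple by Lemma~\ref{lem:[P]}.

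Finally, for convergence, note that $S^{(k)}\to K_1+\dots+K_p$, so $\|h_{S^{(k)}}\|_\infty$, and therefore $\|h_{R^{(k)}}\|_\infty$, stay bounded by some $C$ independent of $k$; hence
$$ \|h_{P_i^{(k)}}-h_i\|_\infty\le \|h_{Q_i^{(k)}}-h_i\|_\infty+\tfrac1k\|h_{R^{(k)}}\|_\infty\le \tfrac1k+\tfrac{C}{k}\longrightarrow 0, $$
so $P_i^{(k)}\to K_i$ for every $i$. The main obstacle is securing strong isomorphism and simplicity \emph{simultaneously}; the device above resolves it by packaging both requirements into one fine simple fan $N(R^{(k)})$ and then adding a vanishing multiple of $R^{(k)}$, which forces the common fan on all $p$ bodies without disturbing the limits. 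The two supporting density statements are the Fuchsian analogues of \cite{Sch93,Ale37}, their only new feature being the use of cocompactness already exploited in Section~\ref{sec:pol}.
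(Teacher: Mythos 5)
Your overall strategy is the paper's: outer approximation of each $K_i$ by a $\Gamma$-convex polyhedron $Q_i^{(k)}$ cut out by support planes, passage to the Minkowski sum $S^{(k)}$ so that its normal fan refines every $N(Q_i^{(k)})$, replacement of $S^{(k)}$ by a nearby \emph{simple} polyhedron $R^{(k)}$, and finally $P_i^{(k)}:=Q_i^{(k)}+\tfrac{1}{k}R^{(k)}$, whose fan is then forced to equal $N(R^{(k)})$. Your first and third steps are essentially sound (in the first, ``uniform continuity of $h$'' alone is not quite enough: $h_Q$ agrees with $h$ only on the net and a priori could be much larger in between, so you also need the sublinearity of $H_Q$, or equicontinuity of the family $\{h_Q\}$, to get the uniform bound --- but this is routine and at the same level of rigor as the paper's own sketch).

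The genuine gap is in your second step, where you invoke ``the classical genericity fact of \cite{Ale37}'': that a small generic perturbation of the support numbers inside the open cone $\mathcal{P}(\Gamma,R)$, with $R$ the \emph{fixed} normal set of $S^{(k)}$, yields a simple polyhedron. In the Fuchsian setting this is false. A perturbation of the support vector is $\Gamma$-equivariant: there is one parameter per \emph{orbit} of facets, not one per facet, so all facets in an orbit move simultaneously, and when several facets of the same orbit meet at a vertex this symmetry can force the degeneracy to persist under \emph{every} perturbation in $\mathcal{P}(\Gamma,R)$. The extreme case: $d=2$, $\Gamma$ the octagon group, $R=\{\eta_1\}$ a single normal. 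Then $\mathcal{P}(\Gamma,R)$ is a single ray, consisting of the homothetic copies of the one-orbit polyhedron of Figure~\ref{fig:polyhedron}; each of its vertices lies on eight facets (vertices project to vertices of the Dirichlet tessellation, where eight octagons meet), so \emph{no} element of $\mathcal{P}(\Gamma,R)$ is simple, and the $R^{(k)}$ you need may simply not exist in the cone you perturb in. Such orbit coincidences at vertices do occur for sums $S^{(k)}$ of outer approximations (for instance when all $K_i$ are equal and polyhedral with a vertex of this type). This is precisely where the paper's proof does extra work that cannot be omitted: before any perturbation, it \emph{adds new facets} --- small inward translates of suitably chosen support planes at the offending vertices, which enlarges the normal set $R$ --- until around every vertex all facets lie in pairwise distinct orbits; only after that can non-simple vertices be resolved by small parallel moves of individual facets, as in the classical argument. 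Your proposal is missing this facet-adding stage, and perturbation within the fixed cone $\mathcal{P}(\Gamma,R)$ cannot replace it.
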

\begin{proof}
First, any $\Gamma$-convex body $K$ is arbitrarily close to a $\Gamma$-convex polyhedron $Q$.
Consider a finite number of points on $K$ and let $Q$ be the polyhedron made by the 
hyperplanes orthogonal to the orbits of these points, and passing through these points. We get $K\subset Q$.
For any $\epsilon >0$, if $Q+\epsilon B$ is not included in $K$
then add facets to $Q$. The process ends by cocompactness.

Let $Q^i$ be a $\Gamma$-convex polyhedron arbitrary close to $K_i$, and let $P$ be the $\Gamma$-convex polyhedron 
$Q^1+\cdots+Q^p$. Let us suppose that around a vertex $x$ of $P$, two facets belong
to the same orbit for the action of $\Gamma$.
We perform
 a little translation in direction of $P$ of a support hyperplane at $x$, which is not a support
hyperplane of a face containing $x$. A new facet appears, the vertex
$x$ disappears, and the 
two facets in the same orbit share one less vertex. Repeating this operation a finite number of times,
we get a polyhedron $P'$ with $N(P')>N(P)$ and such that around each vertex, no facets belong to the
same orbit.
If  $P'$ is not simple, there exists a vertex 
$x$ of $P'$ such that more than $d+1$ facets meet at this vertex. 
 We perform a small little
parallel move of one of this facets.
In this case the number of facets meeting at the vertex $x'$ corresponding to $x$ decreases, 
and new vertices can appear, but the number of facets meeting at each of those vertices is strictly less than the number
of facets meeting at $x$. If the move is sufficiently small, the number of facets meeting at the other vertices is not
greater than it was on $P'$.
Repeating this operation a finite number of times leads to the simple polyhedra $P''$, and $N(P'')>N(P')$.

Now we define $P^i=Q^i+\alpha P''$, with $\alpha>0$ sufficiently small such that
$P^i$ remains close to $Q^i$ and hence close to $K_i$. By definition of
$P$, $N(P)>N(Q^i)$ and finally $N(P'')>N(Q^i)$ hence $N(P^i)=N(P'')$: all the
$P^i$ are strongly isomorphic to $P''$, which is simple.
\end{proof}

\begin{theorem}\label{them: vol conv}
 The covolume is a convex function on the space of $\Gamma$-convex bodies: for any $K_1, K_2\in \K(\Gamma)$,
$\forall t\in[0,1]$,
$$\mathrm{covol}((1-t)K_1+tK_2)\leq t\mathrm{covol}(K_1)+(1-t)\mathrm{covol}(K_2).$$
\end{theorem}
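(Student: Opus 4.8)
The plan is to reduce the statement to the polyhedral case, where $\mathrm{covol}$ is an honest polynomial with positive definite Hessian, and then to pass to the limit using the joint polyhedral approximation of Lemma~\ref{lem: approximation} together with the continuity of $\mathrm{covol}$.

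First I would settle the inequality for two strongly isomorphic simple $\Gamma$-convex polyhedra $P_1,P_2$ lying in a common class $[P]$. By Lemma~\ref{lem:[P]} the set $[P]\subset\R^n$ is an open convex cone, and within it the Minkowski sum corresponds to the sum of support vectors; hence for $t\in[0,1]$ the body $(1-t)P_1+tP_2$ is represented by the support vector $(1-t)h_1+th_2\in[P]$, where $h_i$ is the support vector of $P_i$. On $[P]$ the function $\mathrm{covol}$ is $C^\infty$, and by item~\ref{hess mixed pol} of Lemma~\ref{lem: gen mixed pol} its Hessian satisfies $D^2_h\mathrm{covol}(X,X)=(d+1)d\,\mathrm{covol}(X,X,h,\ldots,h)$ for every $h\in[P]$ and $X\in\R^n$. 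Taking $h_1=\cdots=h_{d-1}=h$ in Theorem~\ref{thm:hess vol def pos pol} shows this quantity is nonnegative (indeed positive for $X\neq 0$), so $\mathrm{covol}$ is convex on $[P]$; restricting it to the segment gives
\[
\mathrm{covol}((1-t)P_1+tP_2)\leq (1-t)\mathrm{covol}(P_1)+t\,\mathrm{covol}(P_2).
\]

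Next I would treat arbitrary $K_1,K_2\in\K(\Gamma)$. By Lemma~\ref{lem: approximation} (with $p=2$) there is a sequence $(P^1_k,P^2_k)_k$ of strongly isomorphic simple $\Gamma$-convex polyhedra with $P^i_k\to K_i$ for the Hausdorff metric. For each fixed $k$ the two polyhedra $P^1_k,P^2_k$ lie in one common class $[P_k]$, so the polyhedral step applies and
\[
\mathrm{covol}((1-t)P^1_k+tP^2_k)\leq (1-t)\mathrm{covol}(P^1_k)+t\,\mathrm{covol}(P^2_k).
\]
Since Minkowski addition and multiplication by a positive scalar are continuous for the Hausdorff metric, $(1-t)P^1_k+tP^2_k\to(1-t)K_1+tK_2$; applying the continuity of $\mathrm{covol}$ to each term and letting $k\to\infty$ yields $\mathrm{covol}((1-t)K_1+tK_2)\leq (1-t)\mathrm{covol}(K_1)+t\,\mathrm{covol}(K_2)$, which is the asserted convexity.

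The main obstacle is conceptual rather than computational: the Hessian criterion that powers the whole argument only makes sense on a single strong isomorphism class, on which $\mathrm{covol}$ is polynomial and support vectors add linearly. Two arbitrary bodies, or even two arbitrary polyhedra, need not share such a structure, and the combinatorics of $(1-t)K_1+tK_2$ may differ from those of the endpoints, so $\mathrm{covol}$ is only piecewise polynomial across classes. The role of Lemma~\ref{lem: approximation} is precisely to supply, for each $k$, a pair of approximants that are \emph{simultaneously} strongly isomorphic, so that the entire segment stays inside one cone $[P_k]$ where Theorem~\ref{thm:hess vol def pos pol} is available. Once this joint approximation is in hand, the convexity along each polyhedral segment and the passage to the limit are both routine.
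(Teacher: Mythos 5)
Your proposal is correct and follows the same overall route as the paper's own proof: joint approximation by strongly isomorphic simple $\Gamma$-convex polyhedra (Lemma~\ref{lem: approximation}), convexity of $\mathrm{covol}$ at the polyhedral level in support-vector coordinates, and passage to the limit by continuity of $\mathrm{covol}$ and of Minkowski addition. The one substantive difference is the result you invoke for the polyhedral step. The paper cites Theorem~\ref{thm: hess pos}, whose proof is elementary linear algebra: the Jacobian of the facet-area vector $A$ is strictly diagonally dominant with positive diagonal entries, hence positive definite; strong isomorphy and simplicity enter only to guarantee that Minkowski addition of the approximants corresponds to addition of their support vectors, so that the segment of support vectors actually represents the segment of bodies. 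You instead derive the Hessian positivity from Theorem~\ref{thm:hess vol def pos pol} through the identity $D^2_h\mathrm{covol}(X,X)=(d+1)d\,\mathrm{covol}(X,X,h,\ldots,h)$ of Lemma~\ref{lem: gen mixed pol}~\ref{hess mixed pol}, specialized to $h_1=\cdots=h_{d-1}=h$. This is logically sound and not circular, since Theorem~\ref{thm:hess vol def pos pol} is established by the continuity method resting on the Euclidean Alexandrov--Fenchel inequality (Theorem~\ref{thm: AF eucl}) and nowhere uses Theorem~\ref{them: vol conv}; but it is heavier machinery than necessary: only the diagonal case of the mixed bilinear form is needed, and that case is precisely what Theorem~\ref{thm: hess pos} provides directly on all of $\mathcal{P}(\Gamma,R)$. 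So the two arguments differ only in this substitution; the paper's choice buys a self-contained elementary positivity input, while yours illustrates how the convexity statement sits inside the Alexandrov--Fenchel framework, at the cost of invoking a strictly stronger theorem.
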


\begin{proof}
By Lemma~\ref{lem: approximation}, there exist strongly isomorphic simple $\Gamma$-convex polyhedra 
$P_1$ and $P_2$ arbitrary close to respectively $K_1$ and $K_2$.  
 As for simple strongly isomorphic $\Gamma$-convex polyhedra,
the addition of support vectors is the same as Minkowski addition,
 Theorem~\ref{thm: hess pos} gives that
$$\mathrm{covol}((1-t)P_1+tP_2)\leq t\mathrm{covol}(P_1)+(1-t)\mathrm{covol}(P_2)$$
 and the theorem follows by continuity of the covolume.
\end{proof}

\subsection{Mixed covolume and standard inequalities}

\begin{lemma}
 The covolume on $\K(\Gamma)$ is a homogeneous polynomial of degree $(d+1)$. Its polar form is
the \emph{mixed-covolume} $\mathrm{covol}(\cdot,\ldots,\cdot)$, a  continuous non-negative symmetric map  on 
$(\K(\Gamma))^{d+1}$ such that
$$\mathrm{covol}(K,\ldots,K)=\mathrm{covol}(K).$$ 

Moreover if we restrict to a space of  strongly isomorphic simple $\Gamma$-convex polyhedra,
or to the space of $C^{\infty}_+$  $\Gamma$-convex bodies,  then 
$\mathrm{covol}(\cdot,\ldots,\cdot)$ is the same map as the one previously considered.
\end{lemma}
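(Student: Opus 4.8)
The plan is to obtain polynomiality first, and then to read off the remaining assertions from the polyhedral case together with the continuity of the covolume.

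\emph{Polynomiality.} First I would show that, for fixed $K_1,\dots,K_m\in\K(\Gamma)$, the map $(\lambda_1,\dots,\lambda_m)\mapsto\mathrm{covol}(\lambda_1K_1+\cdots+\lambda_mK_m)$ on $(\R_{\geq 0})^m$ is the restriction of a homogeneous polynomial of degree $d+1$. By Lemma~\ref{lem: approximation} there are strongly isomorphic simple $\Gamma$-convex polyhedra $P^1_k,\dots,P^m_k$ converging to $K_1,\dots,K_m$. They all lie in one strong isomorphism class $[P]$, which by Lemma~\ref{lem:[P]} is a convex cone on which Minkowski addition coincides with addition of support vectors; there the covolume is a homogeneous polynomial of degree $d+1$ (the remark preceding Lemma~\ref{lem: gen mixed pol}). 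Hence $Q_k(\lambda):=\mathrm{covol}(\lambda_1P^1_k+\cdots+\lambda_mP^m_k)$ is a homogeneous polynomial of degree $d+1$, and by continuity of Minkowski addition, of scalar multiplication and of the covolume, $Q_k(\lambda)\to\mathrm{covol}(\lambda_1K_1+\cdots+\lambda_mK_m)$ pointwise on $(\R_{\geq 0})^m$.

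\emph{The limit is a polynomial.} This is the technical heart of the argument. The space $\mathcal H$ of homogeneous polynomials of degree $d+1$ in $m$ variables is finite dimensional, say of dimension $N$, and one can fix a unisolvent set $\lambda^{(1)},\dots,\lambda^{(N)}\in(\R_{>0})^m$ for which the evaluation map $\mathcal H\to\R^N$, $Q\mapsto(Q(\lambda^{(j)}))_j$, is a linear isomorphism. Since each $Q_k(\lambda^{(j)})$ converges, the coefficient vector of $Q_k$, being the image of $(Q_k(\lambda^{(j)}))_j$ under the inverse of this isomorphism, converges as well. Thus $Q_k\to Q$ for some $Q\in\mathcal H$, and $Q$ agrees with $\mathrm{covol}(\lambda_1K_1+\cdots+\lambda_mK_m)$ on the positive orthant; being a polynomial there, it is a genuine polynomial on $\R^m$.

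\emph{Definition and elementary properties.} Taking $m=d+1$, I then define $\mathrm{covol}(K_1,\dots,K_{d+1})$ as the polar form of this polynomial, equivalently by the polarization identity
\begin{equation*}
\mathrm{covol}(K_1,\dots,K_{d+1})=\frac{1}{(d+1)!}\sum_{\emptyset\neq J\subseteq\{1,\dots,d+1\}}(-1)^{d+1-|J|}\,\mathrm{covol}\Big(\sum_{j\in J}K_j\Big).
\end{equation*}
Symmetry is built into this definition, and taking all $K_i$ equal to $K$ yields $Q(\lambda)=\lambda^{d+1}\mathrm{covol}(K)$, hence $\mathrm{covol}(K,\dots,K)=\mathrm{covol}(K)$. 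Because the right-hand side above is a finite linear combination of covolumes of Minkowski sums (each $\sum_{j\in J}K_j$ is again a $\Gamma$-convex body), and both covolume and Minkowski addition are continuous for the Hausdorff metric, $\mathrm{covol}(\cdot,\dots,\cdot)$ is continuous on $(\K(\Gamma))^{d+1}$. For strongly isomorphic simple polyhedra the mixed-covolume is non-negative by Corollary~\ref{cor: pol mix vol pos}; approximating arbitrary bodies by such polyhedra (Lemma~\ref{lem: approximation}) and passing to the limit, non-negativity is preserved, a limit of non-negative numbers being non-negative.

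\emph{Consistency with the previous definitions.} On a class $[P]$ of strongly isomorphic simple polyhedra and on $C^\infty_+(\Gamma)$, the covolume is already known to be a homogeneous polynomial of degree $d+1$ whose polar form is the mixed-covolume previously introduced. Since the polar form of a homogeneous polynomial is unique, and the mixed-covolume defined here is by construction the polar form of the same function $\mathrm{covol}$, the two notions agree on these subsets.
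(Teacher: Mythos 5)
Your proposal is correct and follows essentially the same route as the paper: approximation by strongly isomorphic simple $\Gamma$-convex polyhedra (where the covolume is a degree-$(d+1)$ polynomial in the support vectors), continuity of the covolume and of the Minkowski operations, the polarization identity to define the mixed covolume, non-negativity inherited from Corollary~\ref{cor: pol mix vol pos}, and uniqueness of polar forms for the consistency statement. The only real difference is one of bookkeeping: you justify that the pointwise limit of the polynomials $Q_k$ is again a polynomial by a unisolvent-set argument, whereas the paper avoids this step by defining the mixed covolume via the polarization formula first (so it is manifestly continuous) and then passing to the limit directly in the multinomial expansion for the approximating polyhedra.
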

\begin{proof}
 Let us define
\begin{equation}\label{eq:polar}
\mathrm{covol}(K_1,\ldots,K_{d+1})=\frac{1}{(d+1)!}\sum_{i=1}^{d+1} (-1)^{d+1+k}\sum_{i_1<\cdots<i_{d+1}} \mathrm{covol}(K_{i_1}+\cdots +K_{i_{d+1}}) 
\end{equation}
which is a symmetric map. From the continuity of the covolume and of the Minkowski addition, it is a continuous map.
In the case when $K_i$ are  strongly isomorphic simple polyhedra,
the right-hand side of \eqref{eq:polar} to the mixed-covolumes previously introduced \cite[5.1.3]{Sch93}
(we also could have used another polarization formula \cite[(A.5)]{Hor07}). Let us consider a sequence of
 strongly isomorphic simple $\Gamma$-convex polyhedra 
$P_1(k),\ldots,P_{d+1}(k)$ converging to $K_1,\ldots,K_{d+1}$ (Lemma~\ref{lem: approximation}).
From the definition of the mixed-covolume we have
$$\mathrm{covol}(\lambda_1 P_1(k)+\cdots+\lambda_{d+1} P_{d+1}(k))=\sum_{i_1,\ldots,i_{d+1}=1}^{d+1} \lambda_{i_1}\cdots\lambda_{i_{d+1}}
\mathrm{covol}(P(k)_{i_1},\ldots,P(k)_{i_{d+1}})$$
and by continuity, passing to the limit,
$$\mathrm{covol}(\lambda_1 K_1+\cdots+\lambda_{d+1}  K_{d+1})=\sum_{i_1,\ldots,i_{d+1}=1}^{d+1} \lambda_{i_1}\cdots\lambda_{i_{d+1}}
\mathrm{covol}(K_{i_1},\ldots,K_{i_{d+1}})$$
so the covolume is a polynomial, and $\mathrm{covol}(\cdot,\ldots,\cdot)$ introduced at the beginning of the proof is its polarization.
It is non-negative due to Corollary~\ref{cor: pol mix vol pos}.

In the case of $C^{2}_+$  $\Gamma$-convex bodies, both notions of mixed-covolume satisfy  \eqref{eq:polar}.
\end{proof}

\begin{theorem}\label{thm:general}
Let $K_i\in \K(\Gamma)$ and $0<t<1$. We have the following inequalities.
\begin{eqnarray*}
\ &&\mbox{\emph{Reversed Alexandrov--Fenchel inequality:}} \\
\ &&\mathrm{covol}(K_1,K_2,K_3,\ldots,K_{d+1})^2\leq \mathrm{covol}(K_1,K_1,K_3,\ldots,K_{d+1})\mathrm{covol}(K_2,K_2,K_3,\ldots,K_{d+1}) \\ 
\ &&\mbox{\emph{First reversed Minkowski inequality:}}\\
\ &&\mathrm{covol}(K_1,K_2,\ldots,K_2)^{d+1}\leq \mathrm{covol}(K_2)^{d}\mathrm{covol}(K_1)\\
\ &&\mbox{\emph{Second or quadratic reversed Minkowski inequality:}}\\
\ &&\mathrm{covol}(K_1,K_2,\ldots,K_2)^2\leq \mathrm{covol}(K_2)\mathrm{covol}(K_1,K_1,K_2,\ldots,K_2)\\
\ &&\mbox{\emph{Reversed Brunn--Minkowski inequality:}} \\
\ &&\mathrm{covol}((1-t)K_1+tK_2)^{\frac{1}{d+1}} \leq (1-t)\mathrm{covol}(K_1)^{\frac{1}{d+1}}+t\mathrm{covol}(K_2)^{\frac{1}{d+1}} \\
\ &&\mbox{\emph{Reversed linearized first Minkowski inequality:}}\\
\ && (d+1) \mathrm{covol}(K_1,K_2,\ldots,K_2)\leq d\mathrm{covol}(K_2)+\mathrm{covol}(K_1) 
\end{eqnarray*}
If all the $K_i$ are  $C^{\infty}_+$  or  strongly isomorphic simple polyhedra, then equality holds 
in reversed Alexandrov--Fenchel and  second reversed Minkowski inequalities if and only if $K_1$ and $K_2$ are homothetic.
\end{theorem}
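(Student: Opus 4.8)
The plan is to reduce every one of the five inequalities to a single fact: the positive (semi-)definiteness of the symmetric bilinear form $B(X,Y):=\mathrm{covol}(X,Y,K_3,\ldots,K_{d+1})$, which for polyhedral and $C^\infty_+$ data is exactly Theorem~\ref{thm:hess vol def pos pol} and Theorem~\ref{thm:hess vol def pos reg}. First I would record that for arbitrary $K_i\in\K(\Gamma)$ the form $B$ is positive semi-definite: approximate $K_1,\ldots,K_{d+1}$ by strongly isomorphic simple $\Gamma$-convex polyhedra (Lemma~\ref{lem: approximation}), apply Theorem~\ref{thm:hess vol def pos pol} to the approximants, and pass to the limit using continuity of the mixed-covolume. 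Positive definiteness only survives as semi-definiteness under this limit, which is precisely why the equality statement is restricted to the regular and polyhedral classes. The reversed Alexandrov--Fenchel inequality is then nothing but the Cauchy--Schwarz inequality $B(K_1,K_2)^2\le B(K_1,K_1)\,B(K_2,K_2)$ for the positive semi-definite form $B$; the sign is reversed relative to the classical theory precisely because $B$ is definite rather than of Lorentzian signature.

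Second, the quadratic reversed Minkowski inequality is the special case $K_3=\cdots=K_{d+1}=K_2$ of reversed Alexandrov--Fenchel. For the first reversed Minkowski inequality I would set $V_i:=\mathrm{covol}(K_1,\ldots,K_1,K_2,\ldots,K_2)$ with $i$ copies of $K_1$, and apply reversed Alexandrov--Fenchel with the two free slots carrying $K_1$ and $K_2$ and the remaining $d-1$ slots carrying $i-1$ copies of $K_1$ and $d-i$ copies of $K_2$; this yields the log-convexity $V_i^2\le V_{i-1}V_{i+1}$. Convexity of $i\mapsto\log V_i$ between the endpoints $V_0=\mathrm{covol}(K_2)$ and $V_{d+1}=\mathrm{covol}(K_1)$ then gives $V_1^{\,d+1}\le \mathrm{covol}(K_2)^d\,\mathrm{covol}(K_1)$, provided all $V_i$ are positive, which holds by Corollary~\ref{cor: pol mix vol pos} together with positivity of the covolume. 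The reversed linearized first Minkowski inequality follows at once by applying the weighted arithmetic--geometric mean inequality with weights $\tfrac{d}{d+1}$ and $\tfrac{1}{d+1}$ to $V_1\le \mathrm{covol}(K_2)^{d/(d+1)}\mathrm{covol}(K_1)^{1/(d+1)}$ and multiplying by $d+1$.

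Third, for the reversed Brunn--Minkowski inequality I would write $K=(1-t)K_1+tK_2$ and expand by multilinearity of the mixed-covolume,
$$\mathrm{covol}(K)=(1-t)\,\mathrm{covol}(K_1,K,\ldots,K)+t\,\mathrm{covol}(K_2,K,\ldots,K).$$
Applying the first reversed Minkowski inequality to each term, with $K$ in the role of the repeated body, bounds the two summands by $\mathrm{covol}(K)^{d/(d+1)}$ times $\mathrm{covol}(K_1)^{1/(d+1)}$ and $\mathrm{covol}(K_2)^{1/(d+1)}$ respectively; dividing through by $\mathrm{covol}(K)^{d/(d+1)}$ (legitimate since the covolume is strictly positive) gives exactly the stated inequality, i.e.\ convexity of $\mathrm{covol}^{1/(d+1)}$.

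Finally, for the equality cases I would assume $K_3,\ldots,K_{d+1}$ lie in $[P]$ or in $C^\infty_+(\Gamma)$, so that $B$ is genuinely positive definite by Theorem~\ref{thm:hess vol def pos pol} or Theorem~\ref{thm:hess vol def pos reg}, whose kernels are trivial by Lemma~\ref{lem: noyau trivial pol} and Lemma~\ref{lem: noyau trivial reg}. Then equality in the Cauchy--Schwarz step forces the support functions of $K_1$ and $K_2$ to be linearly dependent, hence, both being negative support functions, $K_1=\lambda K_2$ for some $\lambda>0$, i.e.\ $K_1$ and $K_2$ are homothetic; equality in the quadratic reversed Minkowski inequality is the same special case. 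The main obstacle I anticipate is the first, qualitative step: ensuring that positive semi-definiteness of $B$ genuinely passes to arbitrary bodies under Hausdorff approximation, and carefully tracking which assertions survive degeneration to semi-definiteness (the inequalities themselves) and which require the full definiteness of the regular or polyhedral setting (the equality characterizations).
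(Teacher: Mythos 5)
Your argument is correct, and for the Alexandrov--Fenchel and quadratic Minkowski inequalities (and the equality cases) it coincides with the paper's proof: polyhedral approximation via Lemma~\ref{lem: approximation}, Cauchy--Schwarz for the positive definite forms of Theorems~\ref{thm:hess vol def pos pol} and~\ref{thm:hess vol def pos reg}, passage to the limit, and, for equality, linear dependence of the (negative) support data forcing a positive homothety factor. For the other three inequalities you take a genuinely different route. The paper goes: convexity of the covolume (Theorem~\ref{them: vol conv}, itself obtained by approximation from Theorem~\ref{thm: hess pos}) $\Rightarrow$ reversed Brunn--Minkowski, by evaluating at the normalized bodies $K_i/\mathrm{covol}(K_i)^{\frac{1}{d+1}}$; then both the linearized and the first Minkowski inequalities follow from the barrier argument $f(0)=f(1)=0\Rightarrow f'(0)\leq 0$ applied to $\lambda\mapsto \mathrm{covol}((1-\lambda)K_1+\lambda K_2)$ minus its linear interpolation, and to the same expression with exponent $\frac{1}{d+1}$. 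You go in the opposite direction, deriving everything from Alexandrov--Fenchel: the chain $V_i^2\leq V_{i-1}V_{i+1}$ gives the first Minkowski inequality by log-convexity, AM--GM then gives the linearized one, and the multilinear decomposition $\mathrm{covol}(K)=(1-t)\mathrm{covol}(K_1,K,\ldots,K)+t\,\mathrm{covol}(K_2,K,\ldots,K)$ followed by division by $\mathrm{covol}(K)^{d/(d+1)}$ gives Brunn--Minkowski. Both routes are classical; yours buys independence from Theorem~\ref{them: vol conv} (everything rests on the one bilinear-form statement), while the paper's reuses the convexity theorem it had already established and avoids the chain argument entirely.

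Two small repairs are needed in your write-up. First, Corollary~\ref{cor: pol mix vol pos} and the general lemma of Section~\ref{sec:gen} give only \emph{non-negativity} of the mixed covolume, not positivity, so the logarithms in your chain argument are not automatically defined. This is easily patched: if $V_1=0$ the first Minkowski inequality is trivial since $V_0=\mathrm{covol}(K_2)$ and $V_{d+1}=\mathrm{covol}(K_1)$ are positive; if $V_1>0$, a minimal-zero argument using $V_{j-1}^2\leq V_{j-2}V_j$ shows no intermediate $V_j$ can vanish, and the log-convexity argument proceeds. Second, for arbitrary bodies the map $B(\cdot,\cdot)=\mathrm{covol}(\cdot,\cdot,K_3,\ldots,K_{d+1})$ is defined only on the cone $\K(\Gamma)$, not on a vector space, so one cannot literally ``apply Cauchy--Schwarz to the semi-definite limit form'' (the proof of Cauchy--Schwarz evaluates the form at differences). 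What passes to the limit is the \emph{inequality}: apply Cauchy--Schwarz at the polyhedral level, where the form lives on $\R^n$ and is definite, and then let $k\to\infty$, exactly as the paper does. Your text admits this reading, but the order of quantifiers should be stated that way.
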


In the classical case of Euclidean convex bodies, the linearized first Minkowski inequality is valid only
on particular subsets of the space of convex bodies, see \cite[(6.7.11)]{Sch93}.

\begin{proof}
 Let $P_1(k),\ldots,P_{d+1}(k)$ be a sequence of simple strongly isomorphic $\Gamma$-convex polyhedra
  converging to $K_1,\ldots,K_{d+1}$ (Lemma~\ref{lem: approximation}).
Applying Cauchy--Schwarz inequality to the inner product
$\mathrm{covol}(\cdot,\cdot,P_3(k),\ldots,P_{d+1}(k))$ (Theorem~\ref{thm:hess vol def pos pol}) at
$(P_1(k),P_2(k))$ and passing to the limit gives reversed Alexandrov--Fenchel inequality.
Equalities cases follow from Theorem~\ref{thm:hess vol def pos reg} and
\ref{thm:hess vol def pos pol}. 
The second reversed Minkowski inequality and its equality case follows from Alexandrov--Fenchel inequality.

As the covolume is convex (Theorem~\ref{them: vol conv}), for $\overline{K}_1$ and $\overline{K}_2$ of unit
covolume, for $\overline{t}\in[0,1]$ we get
$$\mathrm{covol}((1-\overline{t})\overline{K}_1+\overline{t}\overline{K}_2)\leq 1.$$
Taking $\overline{K}_i=K_i/\mathrm{covol}(K_i)^{\frac{1}{d+1}}$ and
$$\overline{t}=\frac{t\mathrm{covol}(K_2)^{\frac{1}{d+1}}}{(1-t)\mathrm{covol}(K_1)^{\frac{1}{d+1}}+t\mathrm{covol}(K_2)^{\frac{1}{d+1}}} $$ 
leads to the reversed  Brunn--Minkowski inequality. 

As $\mathrm{covol}(\cdot)$ is convex, the map
$$f(\lambda)=\mathrm{covol}((1-\lambda)K_1+\lambda K_2)-(1-\lambda)\mathrm{covol}(K_1)-\lambda \mathrm{covol}(K_2), 0\leq \lambda \leq 1, $$
is convex. As $f(0)=f(1)=0$, we have $f'(0)\leq 0$, that is the reversed linearized first Minkowski inequality.
(Remember that
$$\mathrm{covol}((1-\lambda)K_1+\lambda K_2)=(1-\lambda)^{d+1}\mathrm{covol}(K_1)+(d+1)(1-\lambda)^d\lambda \mathrm{covol}(K_1,\ldots,K_1,K_2)+\lambda^2[\ldots].) $$

Reversed Brunn--Minkowski says that the map $\mathrm{covol}(\cdot)^{\frac{1}{d+1}}$ is convex. Doing the same as above with the convex map
$$g(\lambda)=\mathrm{covol}((1-\lambda)K_1+\lambda K_2)^{\frac{1}{d+1}}-(1-\lambda)\mathrm{covol}(K_1)^{\frac{1}{d+1}}-\lambda \mathrm{covol}(K_2)^{\frac{1}{d+1}}, 0\leq \lambda \leq 1, $$
leads to the first reversed Minkowski inequality.
\end{proof}

The \emph{(Minkowski) area} $S(K)$ of a $\Gamma$-convex body $K$ is $(d+1)\mathrm{covol}(B,K,\ldots,K)$. 
Note that it can be defined from the covolume:
$$S(K)=\lim_{\epsilon\rightarrow 0^+} \frac{\mathrm{covol}(K+\epsilon B)-\mathrm{covol}(K)}{\epsilon}. $$

The following  inequality says that, among $\Gamma$-convex bodies of 
area $1$, $B$ has smaller covolume, or equivalently that among $\Gamma$-convex bodies of 
covolume $1$, $B$ has larger area. 

\begin{corollary}[Isoperimetric inequality]
 Let $K$ be a $\Gamma$-convex body. Then
$$\left(\frac{S(K)}{S(B)}\right)^{d+1}\leq \left(\frac{\mathrm{covol}(K)}{\mathrm{covol}(B)}\right)^d. $$
\end{corollary}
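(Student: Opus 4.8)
The plan is to deduce this directly from the first reversed Minkowski inequality in Theorem~\ref{thm:general}, with no new geometry required; the entire content is to unwind the definition of the Minkowski area $S$ and let the algebra collapse onto an inequality already proved.

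First I would record the relevant specializations of the mixed-covolume. By definition $S(K)=(d+1)\mathrm{covol}(B,K,\ldots,K)$. Since the mixed-covolume agrees with the covolume when all of its arguments coincide, we have $\mathrm{covol}(B,\ldots,B)=\mathrm{covol}(B)$, and therefore $S(B)=(d+1)\mathrm{covol}(B)$. These two identities are the only facts about $S$ that the proof uses.

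Next I would substitute them into the claimed inequality. The common factor $(d+1)$ in $S(K)$ and $S(B)$ cancels, and after clearing the powers of $\mathrm{covol}(B)$ the inequality $\bigl(S(K)/S(B)\bigr)^{d+1}\le\bigl(\mathrm{covol}(K)/\mathrm{covol}(B)\bigr)^{d}$ becomes equivalent to
$$\mathrm{covol}(B,K,\ldots,K)^{d+1}\le \mathrm{covol}(K)^{d}\,\mathrm{covol}(B).$$
This is exactly the first reversed Minkowski inequality of Theorem~\ref{thm:general}, read with $K_1=B$ and $K_2=K$, so the corollary follows at once.

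There is essentially no obstacle to overcome here: the covolume of a $\Gamma$-convex body is finite and positive, so every division above is legitimate and the manipulations are harmless. The only point deserving a moment of attention is the identity $S(B)=(d+1)\mathrm{covol}(B)$, that is, the fact that $B$ simultaneously plays the role of the reference body defining $S$ and of the body being measured; this is immediate from the symmetry of the mixed-covolume together with $\mathrm{covol}(B,\ldots,B)=\mathrm{covol}(B)$.
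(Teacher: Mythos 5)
Your proof is correct and is essentially identical to the paper's: both reduce the corollary to the first reversed Minkowski inequality with $K_1=B$, $K_2=K$, using $S(K)=(d+1)\mathrm{covol}(B,K,\ldots,K)$ and $S(B)=(d+1)\mathrm{covol}(B)$ to translate between areas and mixed-covolumes. Nothing further is needed.
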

\begin{proof}
 It follows from the  first reversed Minkowski with $K_1=B$, $K_2=K$, divided by
$S(B)^{d+1}$, with $(d+1)\mathrm{covol}(B)=S(B)$.
\end{proof}

\begin{lemma}
 If $K$ is a $C^{\infty}_+$ $\Gamma$-convex body, then $S(K)$ is the volume of the
Riemannian manifold $\partial K/\Gamma$.

If $K$ is a $\Gamma$-convex polyhedron, then $S(K)$ is the total face area of $K$ (the sum of the area of the facets of
$K$ in a fundamental domain).
\end{lemma}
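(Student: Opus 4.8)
The plan is to handle both cases by pushing the unit ball $B=B^d$ into the first slot of the mixed-covolume, i.e.\ by unwinding $S(K)=(d+1)\mathrm{covol}(B,K,\ldots,K)$, and then reading off the right-hand side from the explicit formulas already at our disposal (Lemma~\ref{eq: reg mix vol gen} in the smooth case, \eqref{eq: der vol pol} in the polyhedral case).

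For the $C^\infty_+$ case I would first record that $B$ itself lies in $C^\infty_+(\Gamma)$: its support function is $h_0\equiv -1$, so $h_0''=(\nabla^2 h_0)_{ij}-h_0\delta_{ij}=\delta_{ij}$ is positive definite and $\kappa^{-1}(h_0)=\det(\delta_{ij})=1$. Thus $B$ and $K$ are both $C^\infty_+$ and the regular mixed-covolume applies. Substituting $X_1=h_0$ and $X_2=\cdots=X_{d+1}=h$ into \ref{mv reg} of Lemma~\ref{eq: reg mix vol gen}, and using $\lgroup h_0,\kappa^{-1}(h)\rgroup=\lgroup -1,\kappa^{-1}(h)\rgroup=-\int_M\kappa^{-1}(h)\,\d M$, gives
\[
\mathrm{covol}(B,K,\ldots,K)=-\tfrac{1}{d+1}\lgroup h_0,\kappa^{-1}(h)\rgroup=\tfrac{1}{d+1}\int_M\kappa^{-1}(h)\,\d M,
\]
so that $S(K)=\int_M\kappa^{-1}(h)\,\d M$. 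Since $\kappa^{-1}(h)=\prod_i r_i$ is precisely the Jacobian of the inverse Gauss map $G=N^{-1}\colon\H^d/\Gamma\to\partial K/\Gamma$ — the identification already used to derive $(d+1)\mathrm{covol}(K)=-\int_M h\,\kappa^{-1}(h)\,\d M$ — and since this Jacobian is positive (the radii of curvature of a $C^\infty_+$ body are positive), the change-of-variables formula with integrand $\equiv 1$ identifies $\int_M\kappa^{-1}(h)\,\d M$ with $\vol(\partial K/\Gamma)$. This closes the first part.

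For the polyhedral case I would use the equivalent description $S(K)=\lim_{\epsilon\to 0^+}\epsilon^{-1}\big(\mathrm{covol}(K+\epsilon B)-\mathrm{covol}(K)\big)$. Because every vector of $B$ is future time-like, adding $\epsilon B$ pushes $K$ into its own future: for $k\in K$, $b\in\epsilon B$ and any $\eta\in\F$ one has $\langle k+b,\eta\rangle_-=\langle k,\eta\rangle_-+\langle b,\eta\rangle_-<H(\eta)$ by Sublemma~\ref{lem:elementary}, so $K+\epsilon B\subseteq K$. The support-function identity $H_{K+\epsilon B}=H_K+\epsilon H_B$ gives $H_{K+\epsilon B}(\eta_i)=H_K(\eta_i)-\epsilon$ at each unit facet normal $\eta_i$, i.e.\ the support number of $F_i$ increases by $\epsilon$ and the facet plane is displaced by Lorentzian distance $\epsilon$ toward the future; the body therefore shrinks by a shell of thickness $\epsilon$. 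This shell $K\setminus(K+\epsilon B)$, per fundamental domain, decomposes into a prism of base $F_i$ and height $\epsilon$, of volume $\epsilon\,A(F_i)$, for each $i\in\I$, together with contributions supported near the codimension $\ge 2$ faces. Dividing by $\epsilon$ and letting $\epsilon\to 0$ yields $S(K)=\sum_{i\in\I}A(F_i)$. Equivalently, this first-order term equals $\big\langle (1,\ldots,1),A(h)\big\rangle=D_h\mathrm{covol}\big((1,\ldots,1)\big)$ by \eqref{eq: der vol pol}, since $(1,\ldots,1)$ is the support vector of $B$ and the parallel-facet family $h+\epsilon(1,\ldots,1)$ agrees with $K+\epsilon B$ to first order.

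The routine parts are the two algebraic substitutions and the change of variables in the smooth case. The main obstacle is the polyhedral case: one must make rigorous that the corner regions near the codimension $\ge 2$ faces contribute only at order $O(\epsilon^2)$ — they occupy a neighborhood of width $O(\epsilon)$ in the two directions transverse to such a face, hence carry volume $O(\epsilon^2)$ per unit of codimension-$2$ measure, with finitely many faces per fundamental domain — so that exactly the total facet area survives in the limit. This is the Fuchsian analogue of the classical Steiner-type expansion of the (here inner) parallel body of a polytope, and the estimate of these corner regions, equivalently the verification that $K+\epsilon B$ (which is \emph{not} a polyhedron, having rounded edges) agrees to first order with the parallel-facet shrink $h+\epsilon(1,\ldots,1)$, is where the care is needed.
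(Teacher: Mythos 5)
Your proposal is correct; in the smooth case it is the paper's own argument made explicit, while in the polyhedral case it follows a genuinely different route. For the $C^{\infty}_+$ part the paper simply says the claim ``follows from the formulas'' of Section~\ref{sec:reg}, because $B$ is $C^2_+$; your substitution of $h_0\equiv -1$ into \ref{mv reg} of Lemma~\ref{eq: reg mix vol gen}, followed by the change of variables under $N^{-1}$ (whose Jacobian is $\kappa^{-1}(h)>0$), is exactly that intended computation. For the polyhedral part, however, the paper never touches the limit $\lim_{\epsilon\to 0^+}\epsilon^{-1}\left(\mathrm{covol}(K+\epsilon B)-\mathrm{covol}(K)\right)$ geometrically: it approximates $B$ by simple polyhedra $P_k$ whose support numbers all equal $1$ and whose normal fan refines that of $K$, forms $K+\alpha P_k$ (strongly isomorphic to $P_k$), reads off from \ref{mv pol} of Lemma~\ref{lem: gen mixed pol} that $(d+1)\mathrm{covol}(P_k,K+\alpha P_k,\ldots,K+\alpha P_k)$ is the total face area of $K+\alpha P_k$, and then sends $\alpha\to 0$ and $k\to\infty$ using continuity of the mixed-covolume and of the Euclidean volume, with ``false faces'' of $K$ contributing zero area. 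That construction is engineered precisely to avoid the step you flag as your main obstacle: the paper needs no estimate on the rounded regions of $K+\epsilon B$ near codimension $\ge 2$ faces, only continuity statements already established. Conversely, your Steiner-type shell argument buys self-containedness: it bypasses the approximation of $B$, the $K+\alpha P_k$ trick and the false-face bookkeeping. Its crux, the corner estimate, is true and provable along the lines you sketch: up to a set of measure zero, the shell $K\setminus(K+\epsilon B)$ is the disjoint union, over the faces $F$ of $K$, of the sets of points $x+t\eta$ with $x$ in the relative interior of $F$, $\eta$ a unit inward normal of $K$ at $x$, and $t\in[0,\epsilon]$; the pieces coming from a face of codimension $k\ge 2$ have volume $O(\epsilon^{k})$, and there are finitely many faces per fundamental domain, so only the facet prisms survive at first order. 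So your proof is complete once this decomposition is written out carefully --- but be aware that this measure-geometric work is exactly what the published proof was designed to avoid.
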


In particular
$S(B)$ is the volume of the compact hyperbolic manifold $ \H^d/\Gamma$.

\begin{proof}
The $C^2_+$  case follows from the formulas in Section~\ref{sec:reg}, because
 $B$ is a $C^2_+$  convex body.

Let $K$ be polyhedral.  Let  $(P_k)_k$ be a sequence of polyhedra 
converging to $B$ and such that all the support numbers of $P_k$ are equal to $1$ (i.e.~all facets are tangent to $\H^d$).
Up to add facets, we can construct $P_k$ such that
$N(P_k)>N(K)$ and  $P_k$ is
simple. Let $\alpha$ be a small positive number. The polyhedron
$K+ \alpha P_k$ is strongly isomorphic to $P_k$.
It follows from formulas of 
Section~\ref{sec:pol} than $(d+1)\mathrm{covol}(P_k,K+ \alpha P_k,\ldots,K+ \alpha P_k)$
is equal to the total face area of $K+\alpha P_k$. By continuity of the mixed-covolume,
 $(d+1)\mathrm{covol}(P_k,K+ \alpha P_k,\ldots,K+ \alpha P_k)$ converges to $(d+1)\mathrm{covol}(P_k,K,\ldots,K)$
when $\alpha$ goes to $0$.

 We associate to $K$ a support vector
$h(K)$
whose entries are support numbers of facets of $K$, but also
to support hyperplanes of $K$ parallel to facets of $P_k$. We also consider
``false faces'' of larger codimension, such that the resulting normal fan is the same as the 
one of $P_k$.  This is possible
as the normal fan of $P_k$ is finer than the one of $K$. The support numbers
of the false faces can be computed using \eqref{eq:supp nb eucl} and \eqref{eq: supp num mink}
(i.e.~$K$ is seen as an element of the closure of $[P_k]$). In particular 
$h(K+\alpha P_k)=h(K)+\alpha h(P_k)$ and as the map $s$ giving the support numbers of a facet
in terms of the support numbers of the polyhedron is linear,
 the area of this facet  is
$V_E(s(h(K))+\alpha s(h(P_k)))$.  By continuity of the Euclidean volume, when $\alpha$ goes to
$0$ this area goes to the area of the facet of $K$ (it is $0$ if the facet was a ``false facet'' of $K$).
Hence $(d+1)\mathrm{covol}(P_k,K,\ldots,K)$ is equal to the total face area of $K$,
and on the other hand it goes to $S(K)$  when $k$ goes to infinity.
\end{proof}

Let us end with an example. Let $K$ be a polyhedral $\Gamma$-convex body with support numbers equal to $1$. 
In this case $S(K)=(d+1)\mathrm{covol}(K)$, and as $S(B)=(d+1)\mathrm{covol}(B)$, the isoperimetric inequality becomes
$$\frac{S(K)}{S(B)}\leq 1.$$
Let $d=2$ and $\Gamma$ be the Fuchsian group which has a regular octagon as a fundamental domain 
in the Klein model of $\H^2$. Then by the Gauss--Bonnet theorem $S(B)=4\pi$.
The total face area of $K$ is the area of only one facet, which is eight times the area of a Euclidean triangle of 
height $h'=\frac{\cosh \varphi-1}{\sinh \varphi}$ and with edge length two times
$h'\frac{1-\cos \pi/4}{\sin \pi/4}$ (see \eqref{eq:supp nb eucl} and \eqref{eq: supp num mink}). 
$\varphi$ is the distance between a point of $\H^2$ and its image by a generator of $\Gamma$,
and $\cosh  \varphi = 2+2\sqrt{2}$ (compare Example~C p.~95 in \cite{kat92} with Lemma~12.1.2 in \cite{MR03}).
By a direct computation the isoperimetric inequality becomes
$$0,27\approx 13-9\sqrt{2} \leq \frac{\pi}{2}\approx 1,57.$$

\paragraph{Remarks on equality cases and general Minkowski theorem}

Brunn--Minkowski inequality for non-degenerated (convex) bodies in the Euclidean space comes with a description of the
 equality case. Namely, the equality occurs for a $t$ if and only if
the bodies are homothetic (the part ``if'' is trivial). 
At a first sigh it is not possible to adapt the standard proof of the equality case to
the Fuchsian case,
as it heavily lies on translations \cite{BF87,Sch93,Ale05}. 

If such a result was known, it should imply, in a way formally equivalent to the classical one,
the characterization of the equality case
in the reversed first Minkowski inequality, as well as the uniqueness part in the Minkowski theorem 
and the equality case in the isoperimetric equality (see below).

The Minkowski problem in the classical case is 
to find a convex  body having a prescribed measure as ``area measure'' (see notes of Section~5.1 in \cite{Sch93}).
It can be solved by approximation
(by $C^2_+$ or polyhedral convex bodies), see \cite{Sch93}, or by a variational argument using the volume,
see \cite{Ale96}. Both methods require a compactness result, which is known  as 
the Blaschke selection Theorem.
Another classical question about Minkowski problem in the $C^2_+$ case, is to know the regularity of the hypersurface 
with respect
to the regularity of the curvature function, see the survey \cite{TW08}. 
All those questions can be transposed in the setting of Fuchsian convex bodies.

\begin{spacing}{0.9}
\begin{footnotesize}
\bibliographystyle{apalike}
\bibliography{mink}

\def\cprime{$'$}
\begin{thebibliography}{}

\bibitem[Alexandrov, 1937]{Ale37}
Alexandrov, A.~D. (1937).
\newblock On the theory of mixed volumes {II}.
\newblock {\em Mat. Sbornik}, 44:1205--1238.
\newblock (Russian. Translated in \cite{Ale96}).

\bibitem[Alexandrov, 1938]{Ale38}
Alexandrov, A.~D. (1938).
\newblock On the theory of mixed volumes {IV}.
\newblock {\em Mat. Sbornik}, 3(2):227--249.
\newblock (Russian. Translated in \cite{Ale96}).

\bibitem[Alexandrov, 1996]{Ale96}
Alexandrov, A.~D. (1996).
\newblock {\em Selected works. {P}art {I}}, volume~4 of {\em Classics of Soviet
  Mathematics}.
\newblock Gordon and Breach Publishers, Amsterdam.
\newblock Selected scientific papers, Translated from the Russian by P. S. V.
  Naidu, Edited and with a preface by Yu. G. Reshetnyak and S. S. Kutateladze.

\bibitem[Alexandrov, 2005]{Ale05}
Alexandrov, A.~D. (2005).
\newblock {\em Convex polyhedra}.
\newblock Springer Monographs in Mathematics. Springer-Verlag, Berlin.
\newblock Translated from the 1950 Russian edition by N. S. Dairbekov, S. S.
  Kutateladze and A. B. Sossinsky, With comments and bibliography by V. A.
  Zalgaller and appendices by L. A. Shor and Yu. A. Volkov.

\bibitem[Andersson et~al., 2007]{Mes07+}
Andersson, L., Barbot, T., Benedetti, R., Bonsante, F., Goldman, W.~M.,
  Labourie, F., Scannell, K.~P., and Schlenker, J.-M. (2007).
\newblock Notes on: ``{L}orentz spacetimes of constant curvature'' [{G}eom.
  {D}edicata {\bf 126} (2007), 3--45; mr2328921] by {G}. {M}ess.
\newblock {\em Geom. Dedicata}, 126:47--70.

\bibitem[Bahn and Ehrlich, 1999]{BE99}
Bahn, H. and Ehrlich, P. (1999).
\newblock A {B}runn-{M}inkowski type theorem on the {M}inkowski spacetime.
\newblock {\em Canad. J. Math.}, 51(3):449--469.

\bibitem[Barbot, 2005]{Bar05}
Barbot, T. (2005).
\newblock Globally hyperbolic flat space-times.
\newblock {\em J. Geom. Phys.}, 53(2):123--165.

\bibitem[Barbot and Zeghib, 2004]{BZ04}
Barbot, T. and Zeghib, A. (2004).
\newblock Group actions on {L}orentz spaces, mathematical aspects: a survey.
\newblock In {\em The {E}instein equations and the large scale behavior of
  gravitational fields}, pages 401--439. Birkh\"auser, Basel.

\bibitem[B\'eguin et~al., 2011]{BBZ10}
B\'eguin, F., Barbot, T., and Zeghib, A. (2011).
\newblock Prescribing {G}auss curvature of surfaces in 3-dimensional spacetimes
  application to the {M}inkowski problem in the {M}inkowski space.
\newblock {\em Ann. Inst. Fourier (Grenoble)}, 61(2):511--591.

\bibitem[Benedetti and Petronio, 1992]{BP92}
Benedetti, R. and Petronio, C. (1992).
\newblock {\em Lectures on hyperbolic geometry}.
\newblock Universitext. Springer-Verlag, Berlin.

\bibitem[Bertrand, 2010]{Ber10}
Bertrand, J. (2010).
\newblock Prescription of {G}auss curvature using optimal mass transport.

\bibitem[Bonnesen and Fenchel, 1987]{BF87}
Bonnesen, T. and Fenchel, W. (1987).
\newblock {\em Theory of convex bodies}.
\newblock BCS Associates, Moscow, ID.
\newblock Translated from the German and edited by L. Boron, C. Christenson and
  B. Smith.

\bibitem[Bonsante, 2005]{Bon05}
Bonsante, F. (2005).
\newblock Flat spacetimes with compact hyperbolic {C}auchy surfaces.
\newblock {\em J. Differential Geom.}, 69(3):441--521.

\bibitem[Burago and Zalgaller, 1988]{BZ88}
Burago, Y.~D. and Zalgaller, V.~A. (1988).
\newblock {\em Geometric inequalities}, volume 285 of {\em Grundlehren der
  Mathematischen Wissenschaften [Fundamental Principles of Mathematical
  Sciences]}.
\newblock Springer-Verlag, Berlin.
\newblock Translated from the Russian by A. B. Sosinski{\u\i}, Springer Series
  in Soviet Mathematics.

\bibitem[Busemann, 2008]{Bus08}
Busemann, H. (2008).
\newblock {\em Convex surfaces}.
\newblock Dover Publications Inc., Mineola, NY.
\newblock Reprint of the 1958 original.

\bibitem[Charney et~al., 1997]{CDM97}
Charney, R., Davis, M., and Moussong, G. (1997).
\newblock Nonpositively curved, piecewise {E}uclidean structures on hyperbolic
  manifolds.
\newblock {\em Michigan Math. J.}, 44(1):201--208.

\bibitem[Cheng and Yau, 1976]{CY76}
Cheng, S.~Y. and Yau, S.~T. (1976).
\newblock On the regularity of the solution of the {$n$}-dimensional
  {M}inkowski problem.
\newblock {\em Comm. Pure Appl. Math.}, 29(5):495--516.

\bibitem[Epstein and Penner, 1988]{EP88}
Epstein, D. B.~A. and Penner, R.~C. (1988).
\newblock Euclidean decompositions of noncompact hyperbolic manifolds.
\newblock {\em J. Differential Geom.}, 27(1):67--80.

\bibitem[Espinar et~al., 2009]{EGM11}
Espinar, J.~M., G{\'a}lvez, J.~A., and Mira, P. (2009).
\newblock Hypersurfaces in {$\Bbb H^{n+1}$} and conformally invariant
  equations: the generalized {C}hristoffel and {N}irenberg problems.
\newblock {\em J. Eur. Math. Soc. (JEMS)}, 11(4):903--939.

\bibitem[Fillastre, 2011a]{Fil11}
Fillastre, F. (2011a).
\newblock Fuchsian polyhedra in {L}orentzian space-forms.
\newblock {\em Math. Ann.}, 350(2):417--453.

\bibitem[Fillastre, 2011b]{polymink}
Fillastre, F. (2011b).
\newblock Polygons of the {L}orentzian plane and spherical polyhedra.
\newblock 4 pages note.

\bibitem[Fillastre and Veronelli, 2012]{fv}
Fillastre, F. and Veronelli, G. (2012).
\newblock The general {C}hristoffel problem in {M}inkowski spacetime.
\newblock In preparation.

\bibitem[Gardner, 2002]{Gar02}
Gardner, R.~J. (2002).
\newblock The {B}runn-{M}inkowski inequality.
\newblock {\em Bull. Amer. Math. Soc. (N.S.)}, 39(3):355--405.

\bibitem[Gromov, 1986]{Gro86}
Gromov, M. (1986).
\newblock {\em Partial differential relations}, volume~9 of {\em Ergebnisse der
  Mathematik und ihrer Grenzgebiete (3) [Results in Mathematics and Related
  Areas (3)]}.
\newblock Springer-Verlag, Berlin.

\bibitem[Gromov and Piatetski-Shapiro, 1988]{GPS88}
Gromov, M. and Piatetski-Shapiro, I. (1988).
\newblock Nonarithmetic groups in {L}obachevsky spaces.
\newblock {\em Inst. Hautes \'Etudes Sci. Publ. Math.}, (66):93--103.

\bibitem[Guan et~al., 2010]{GMT10}
Guan, P., Ma, X.-N., Trudinger, N., and Zhu, X. (2010).
\newblock A form of {A}lexandrov-{F}enchel inequality.
\newblock {\em Pure Appl. Math. Q.}, 6(4, Special Issue: In honor of Joseph J.
  Kohn. Part 2):999--1012.

\bibitem[Hamilton, 1982]{Ham82}
Hamilton, R.~S. (1982).
\newblock The inverse function theorem of {N}ash and {M}oser.
\newblock {\em Bull. Amer. Math. Soc. (N.S.)}, 7(1):65--222.

\bibitem[H{\"o}rmander, 2007]{Hor07}
H{\"o}rmander, L. (2007).
\newblock {\em Notions of convexity}.
\newblock Modern Birkh\"auser Classics. Birkh\"auser Boston Inc., Boston, MA.
\newblock Reprint of the 1994 edition.

\bibitem[Iskhakov, 2000]{isk00}
Iskhakov, I. (2000).
\newblock {\em On hyperbolic surface tessalations and equivariant spacelike
  convex polyhedral surfaces in Minkowski space}.
\newblock PhD thesis, Ohio State University.

\bibitem[Kato, 1995]{Kat95}
Kato, T. (1995).
\newblock {\em Perturbation theory for linear operators}.
\newblock Classics in Mathematics. Springer-Verlag, Berlin.
\newblock Reprint of the 1980 edition.

\bibitem[Katok, 1992]{kat92}
Katok, S. (1992).
\newblock {\em Fuchsian groups}.
\newblock Chicago Lectures in Mathematics. University of Chicago Press,
  Chicago, IL.

\bibitem[Labourie and Schlenker, 2000]{LS00}
Labourie, F. and Schlenker, J.-M. (2000).
\newblock Surfaces convexes fuchsiennes dans les espaces lorentziens \`a
  courbure constante.
\newblock {\em Math. Ann.}, 316(3):465--483.

\bibitem[Langevin et~al., 1988]{LLR88}
Langevin, R., Levitt, G., and Rosenberg, H. (1988).
\newblock H\'erissons et multih\'erissons (enveloppes parametr\'ees par leur
  application de {G}auss).
\newblock In {\em Singularities ({W}arsaw, 1985)}, volume~20 of {\em Banach
  Center Publ.}, pages 245--253. PWN, Warsaw.

\bibitem[Leichtwei{\ss}, 1993]{Lei93}
Leichtwei{\ss}, K. (1993).
\newblock Convexity and differential geometry.
\newblock In {\em Handbook of convex geometry, {V}ol.\ {A}, {B}}, pages
  1045--1080. North-Holland, Amsterdam.

\bibitem[Lopes~de Lima and Soares~de Lira, 2006]{LLS06}
Lopes~de Lima, L. and Soares~de Lira, J.~H. (2006).
\newblock The {C}hristoffel problem in {L}orentzian geometry.
\newblock {\em J. Inst. Math. Jussieu}, 5(1):81--99.

\bibitem[Maclachlan and Reid, 2003]{MR03}
Maclachlan, C. and Reid, A.~W. (2003).
\newblock {\em The arithmetic of hyperbolic 3-manifolds}, volume 219 of {\em
  Graduate Texts in Mathematics}.
\newblock Springer-Verlag, New York.

\bibitem[Martinez-Maure, 1999]{MM99}
Martinez-Maure, Y. (1999).
\newblock De nouvelles in\'egalit\'es g\'eom\'etriques pour les h\'erissons.
\newblock {\em Arch. Math. (Basel)}, 72(6):444--453.

\bibitem[Maskit, 2001]{mas01}
Maskit, B. (2001).
\newblock Matrices for {F}enchel-{N}ielsen coordinates.
\newblock {\em Ann. Acad. Sci. Fenn. Math.}, 26(2):267--304.

\bibitem[Mess, 2007]{Mes07}
Mess, G. (2007).
\newblock Lorentz spacetimes of constant curvature.
\newblock {\em Geom. Dedicata}, 126:3--45.

\bibitem[N{\"a}{\"a}t{\"a}nen and Penner, 1991]{NP91}
N{\"a}{\"a}t{\"a}nen, M. and Penner, R.~C. (1991).
\newblock The convex hull construction for compact surfaces and the {D}irichlet
  polygon.
\newblock {\em Bull. London Math. Soc.}, 23(6):568--574.

\bibitem[Nicolaescu, 2007]{Nic07}
Nicolaescu, L. (2007).
\newblock {\em Lectures on the geometry of manifolds}.
\newblock World Scientific Publishing Co. Pte. Ltd., Hackensack, NJ, second
  edition.

\bibitem[Oliker and Simon, 1983]{OS83}
Oliker, V.~I. and Simon, U. (1983).
\newblock Codazzi tensors and equations of {M}onge-{A}mp\`ere type on compact
  manifolds of constant sectional curvature.
\newblock {\em J. Reine Angew. Math.}, 342:35--65.

\bibitem[Penner, 1987]{pen87}
Penner, R.~C. (1987).
\newblock The decorated {T}eichm\"uller space of punctured surfaces.
\newblock {\em Comm. Math. Phys.}, 113(2):299--339.

\bibitem[Ratcliffe, 2006]{Rat06}
Ratcliffe, J. (2006).
\newblock {\em Foundations of hyperbolic manifolds}, volume 149 of {\em
  Graduate Texts in Mathematics}.
\newblock Springer, New York, second edition.

\bibitem[Rockafellar and Wets, 1998]{RW98}
Rockafellar, R.~T. and Wets, R. (1998).
\newblock {\em Variational analysis}, volume 317 of {\em Grundlehren der
  Mathematischen Wissenschaften [Fundamental Principles of Mathematical
  Sciences]}.
\newblock Springer-Verlag, Berlin.

\bibitem[Rockafellar, 1997]{Roc97}
Rockafellar, T. (1997).
\newblock {\em Convex analysis}.
\newblock Princeton Landmarks in Mathematics. Princeton University Press,
  Princeton, NJ.
\newblock Reprint of the 1970 original, Princeton Paperbacks.

\bibitem[Rudin, 1991]{Rud91}
Rudin, W. (1991).
\newblock {\em Functional analysis}.
\newblock International Series in Pure and Applied Mathematics. McGraw-Hill
  Inc., New York, second edition.

\bibitem[Schlenker, 2007]{Sch07}
Schlenker, J.-M. (2007).
\newblock Small deformations of polygons and polyhedra.
\newblock {\em Trans. Amer. Math. Soc.}, 359(5):2155--2189.

\bibitem[Schneider, 1993]{Sch93}
Schneider, R. (1993).
\newblock {\em Convex bodies: the {B}runn-{M}inkowski theory}, volume~44 of
  {\em Encyclopedia of Mathematics and its Applications}.
\newblock Cambridge University Press, Cambridge.

\bibitem[Trudinger and Wang, 2008]{TW08}
Trudinger, N. and Wang, X.-J. (2008).
\newblock The {M}onge-{A}mp\`ere equation and its geometric applications.
\newblock In {\em Handbook of geometric analysis. {N}o. 1}, volume~7 of {\em
  Adv. Lect. Math. (ALM)}, pages 467--524. Int. Press, Somerville, MA.

\bibitem[Varga, 2000]{Var00}
Varga, R.~S. (2000).
\newblock {\em Matrix iterative analysis}, volume~27 of {\em Springer Series in
  Computational Mathematics}.
\newblock Springer-Verlag, Berlin, expanded edition.

\end{thebibliography}
\end{footnotesize}
\end{spacing}

\end{document}